\documentclass[11pt,leqno]{article}

\setlength{\hoffset}{-1.5cm} \setlength{\voffset}{-1.5cm}
\setlength{\textwidth}{15cm} \setlength{\textheight}{23cm}
\setlength{\parindent}{1cm}

\frenchspacing
\usepackage{amsmath, amsfonts, theorem,color,
}
\usepackage[latin1]{inputenc}

\usepackage[T1]{fontenc}

\usepackage[english]{babel}

\usepackage{lmodern}

\usepackage{amsmath}

\usepackage{amssymb}

\usepackage{mathrsfs}
\usepackage{latexsym}
\usepackage{graphicx}
\makeatletter
\def\@maketitle{\newpage
    \null
    \vskip .8truein
    \begin{center}%
     {\bf \@title \par}%
     \vskip 1.5em
     {\small
      \lineskip .5em
      \begin{tabular}[t]{c}\@author
      \end{tabular}\par}%
    \end{center}%
    \par
    \vskip .4truein}
\@addtoreset{equation}{section} \@addtoreset{theorem}{section}
\@addtoreset{lemma}{section} \@addtoreset{proposition}{section}
\@addtoreset{definition}{section} \@addtoreset{corollary}{section}
\@addtoreset{remark}{section}


\def\dfrac#1#2{\ds{\frac{#1}{#2}}}

\newcommand{\re}{{\mathbb R}}

\let\ds=\displaystyle

\def\R{{\mathbb R}}


\newtheorem{theorem}{Theorem}[section]
\newtheorem{lemma}{Lemma}[section]
\newtheorem{proposition}{Proposition}[section]
\newtheorem{definition}{Definition}[section]
\newtheorem{corollary}{Corollary}[section]
\newtheorem{remark}{Remark}[section]
\newtheorem{example}{Example}[section]
\newtheorem{hypothesis}{Hypothesis}[section]

  {\hfill$\Box$\bigskip\par}

\DeclareMathOperator{\diver}{div}
\def\proof{\list{}{\setlength{\leftmargin}{0pt}
                      \parskip=0pt\parsep=0pt\listparindent=2em
                      \itemindent=0pt}\item[]\futurelet\testchar\@maybe}

\def\@maybe{\ifx[\testchar \let\next\@Opt
          \else \let\next\@NoOpt \fi \next}
\def\@Opt[#1]{{\it Proof of #1.\ }}\def\@NoOpt{{\it Proof.\ }}
\begin{document}
\title{\Large \bf Non-coercive first order Mean Field Games}
\author{{\large \sc Paola Mannucci\thanks{Dipartimento di Matematica ``Tullio Levi-Civita'', Universit\`a di Padova, mannucci@math.unipd.it}, Claudio Marchi \thanks{Dipartimento di Ingegneria dell'Informazione, Universit\`a di Padova, claudio.marchi@unipd.it},}\\
 {\large \sc Carlo Mariconda\thanks{Dipartimento di Matematica ``Tullio Levi-Civita'', Universit\`a di Padova, carlo.mariconda@unipd.it},  Nicoletta Tchou\thanks{Univ Rennes, CNRS, IRMAR - UMR 6625, F-35000 Rennes, France, nicoletta.tchou@univ-rennes1.fr}}
}

\maketitle

\begin{abstract}
  We study first order evolutive Mean Field Games where the Hamiltonian is non-coercive. This situation occurs, for instance, when some directions are ``forbidden'' to the generic player at some points. We establish the existence of a weak solution of the system via a vanishing viscosity method and, mainly, we prove that the evolution of the population's density is the push-forward of the initial density through the flow characterized almost everywhere by the optimal trajectories of the control problem underlying the Hamilton-Jacobi equation. As preliminary steps, we need that the optimal trajectories for the control problem are unique (at least for a.e. starting points) and that the optimal controls can be expressed in terms of the horizontal gradient of the value function.
\end{abstract}
\noindent {\bf Keywords}: Mean Field Games, first order Hamilton-Jacobi equations, continuity equation, non-coercive Hamiltonian, degenerate optimal control problem.

\noindent  {\bf 2010 AMS Subject classification:} 35F50, 35Q91, 49K20, 49L25.

%
%
\section{Introduction}

In this paper we study the following Mean Field Game (briefly, MFG)
\begin{equation}\label{eq:MFG1}
\left\{\begin{array}{lll}
(i)&\quad-\partial_t u+H(x, Du)=F(x,m)&\qquad \textrm{in }\re^2\times (0,T)\\
(ii)&\quad\partial_t m-\diver  (m\, \partial_pH(x, Du))=0&\qquad \textrm{in }\re^2\times (0,T)\\
(iii)&\quad m(x,0)=m_0(x), u(x,T)=G(x, m(T))&\qquad \textrm{on }\re^2,
\end{array}\right.
\end{equation}
where, if $p=(p_1,p_2)$ and $x=(x_1, x_2)$, the functions $H(x,p)$ is
\begin{equation}\label{H}
H(x,p)=\frac{1}{2}(p_1^2+h^2(x_1)p_2^2)
\end{equation}
where $h(x_1)$ is a regular bounded function 
 {\em possibly vanishing} and that $F$ and $G$  are strongly regularizing (see assumptions (H1) -- (H4) below).

These MFG systems arise when the dynamics of the generic player are deterministic and,  when $h$ vanishes, may have a ``forbidden'' direction; actually, if the evolution of the whole population's distribution~$m$ is given, each agent wants to choose the control $\alpha=(\alpha_1, \alpha_2)$ in $L^2([t,T];\R^2)$
in order to minimize the cost
\begin{equation}\label{Jgen}
\int_t^T\left[\frac12 |\alpha(\tau)|^2+F(x(\tau),m)\right]\,d\tau+G(x(T), m(T))
\end{equation}
where, in $[t,T]$, its dynamics $x(\cdot)$ are governed by
\begin{equation}\label{eq:HJ2}
\left\{
\begin{array}{l}
 x_1'(s)=\alpha_1(s) \\
 x_2'(s)=h(x_1(s))\alpha_2(s)
\end{array}\right.
\end{equation}
with $x_1(t)=x_1$ and $x_2(t)=x_2$.
We see that the direction along $x_2$ is forbidden when $h(x_1)$ has zero value. This kind of problems are called of ``Grushin type'' (see \cite{LM} or Example~\ref{ex:Gru} below). As a matter of fact the structure of this degenerate dynamics will play an essential rule in our results.
Even though our techniques apply to a wider class of degenerate operators (see  the forthcoming papers \cite{MMMT2, MMMT3}), in the present paper we restrict our attention to this class of problems because they already contain all the main technical issues.

Let us recall that the MFG theory studies Nash equilibria in games with a huge number of (``infinitely many'') rational and indistinguishable agents. This theory started with the pioneering papers by Lasry and Lions \cite{LL1,LL2,LL3} and by Huang, Malham\'e and Caines \cite{HMC}. A detailed description of the achievements obtained in these years goes beyond the scope of this paper; we just refer the reader to the monographs \cite{AC, C, BFY, GPV, GS}.

As far as we know, degenerate MFG systems have been poorly investigated up to now. Dragoni and Feleqi~\cite{DF} studied a second order (stationary) system where the principal part of the operator fulfills the H\"ormander condition; moreover, Cardaliaguet, Graber, Porretta and Tonon~\cite{CGPT} tackled degenerate second order systems with coercive (and convex as well) first order operators. Hence, these results cannot be directly applied to the non-coercive problem~\eqref{eq:MFG1}.

 The aim of this paper is to prove the existence of a solution of \eqref{eq:MFG1}.
The main result is the interpretation of the evolution of the population's density as the push-forward of the distribution at the initial time through a flow which is suitably defined in terms of the optimal control problem underlying Hamilton Jacobi equation. 

 In order to establish a representation formula for $m$, we shall follow some ideas of P-L Lions in the lectures at College de France (2012) (see \cite{C}), some results proved in
\cite{CH,C13} and the Ambrosio superposition principle \cite{AGS}.
Indeed the non-coercivity of~$H$ prevents from applying directly the arguments of \cite[Sect. 4.3]{C}.
Actually we have to study carefully the behaviour of the optimal trajectories of the control problem associated to the Hamilton-Jacobi equation~\eqref{eq:MFG1}-(i) especially their uniqueness. A crucial point will be the application of the Pontryagin maximum principle and the statement of Theorem~\ref{th:nobifurc} on the uniqueness of the optimal trajectory after the rest time. As far as we know this  uniqueness property has never been tackled before for this kind of degenerate dynamics and in our opinion may have interest in itself.

We point out that our approach could be applied to other first order ``degenerate'' MFG
but it is essential to prove some uniqueness properties of optimal trajectories in a set of starting points of full measure.
In general this set depends on the semiconcavity properties of $u$, as in the classical setting, and on the degeneracy of the dynamics.

We now list our notations and the assumptions, we give the definition of (weak) solution to system~\eqref{eq:MFG1} and we state the existence result for system~\eqref{eq:MFG1}.

\noindent\underline {Notations and Assumptions}.
For $x=(x_1, x_2)\in \re^2$, $\phi:\R^2\to\R$ and $\Phi:\R^2\to\R^2$  differentiable, we set: $D_G \phi(x):=(\partial_{x_1}\phi(x),h(x_1)\partial_{x_2}\phi(x))$ and $\diver_G\Phi(x):=\partial_{x_1}\Phi_1(x)+h(x_1)\partial_{x_2}\Phi_2(x).$
We denote by~$\mathcal P_1$ the space of Borel probability measures on~$\re^d$ with finite first order moment, endowed with the Kantorovich-Rubinstein distance~{${\bf d}_1$}. We denote $C^2(\R^2)$ the space of functions with continuous second order derivatives endowed with the norm $\|f\|_{C^2}:=\sup_{x\in\R^2}[|f(x)|+|Df(x)|+|D^2f(x)|]$. Throughout this paper, we shall require the following hypotheses:
\begin{itemize}
\item[(H1)]\label{H1} The functions~$F(\cdot,\cdot)$ and $G(\cdot,\cdot)$ are real-valued function, continuous on \\$\re^2\times\mathcal P_1$;
\item[(H2)]\label{H2} The map $F(x,\cdot)$ is Lipschitz continuous from $\mathcal P_1$ to $C^{2}(\re^2)$ uniformly for $x\in\R^2$;
moreover, there exists~$C\in \mathbb R$ such that
$$\|F(\cdot,m)\|_{C^2}, \|G(\cdot,m)\|_{C^2}\leq C,\qquad \forall m\in \mathcal P_1,$$
\item[(H3)]\label{H3} the function $h:\mathbb R\to\mathbb R$ is $C^{2}(\mathbb R)$
with $\|h\|_{C^2}\leq C$;
\item[(H4)]\label{H4} the initial distribution~$m_0$ has a compactly supported density (that we still denote by~$m_0$, with a slight abuse of notation), $m_0\in C^{2,\delta}(\re^2)$, for a $\delta\in (0,1)$.
\end{itemize}

\vskip 5mm

\begin{example}\label{ex:Gru}
Easy examples of $h$ are $h(x_1)=\sin (x_1)$ or $h(x_1)=\frac{x_1}{\sqrt{1+x_1^2}}$, (see \cite{LM} where the term $h(x_1)=\frac{x_1}{\sqrt{1+x_1^2}}$ is introduced as a degenerate diffusion term).
\end{example}

We now introduce our definition of solution of the MFG system~\eqref{eq:MFG1} and state the main result concerning its existence.
\begin{definition}\label{defsolmfg}
The pair $(u,m)$ is a solution of system~\eqref{eq:MFG1} if:
\begin{itemize}
\item[1)] $(u,m)\in W^{1,\infty}(\re^2\times[0,T])\times C^0([0,T];\mathcal P_1(\re^2))$;
\item[2)] Equation~\eqref{eq:MFG1}-(i) is satisfied by $u$ in the viscosity sense;
\item[3)] Equation~\eqref{eq:MFG1}-(ii) is satisfied by $m$ in the sense of distributions.
\end{itemize}
\end{definition}
Here below we state the main result of this paper.
\begin{theorem}\label{thm:main}
Under the above assumptions:
\begin{enumerate}
\item System \eqref{eq:MFG1} has a solution $(u,m)$ in the sense of Definition~\ref{defsolmfg},
\item $m$ is the push-forward of $m_0$ through the characteristic flow
\begin{equation}\label{chflow}
\left\{
\begin{array}{ll}
x_1'(s)=-u_{x_1}(x(s),s),& \quad x_1(0)=x_1, \\
x_2'(s)=-h^2(x_1(s))u_{x_2}(x(s),s),&\quad x_2(0)=x_2
\end{array}\right..
\end{equation}
\end{enumerate}
\end{theorem}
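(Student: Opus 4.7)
The plan is to combine a vanishing viscosity regularization with a Schauder fixed-point argument to produce a weak solution $(u,m)$, and then establish the push-forward representation through the Ambrosio superposition principle together with the uniqueness of optimal trajectories for the underlying control problem. For each $\varepsilon>0$, I consider the regularized MFG system obtained by adding $-\varepsilon\Delta u_\varepsilon$ in (i) and $-\varepsilon\Delta m_\varepsilon$ in (ii). For a fixed $\mu\in C^0([0,T];\mathcal P_1)$, the HJ equation with data $F(x,\mu)$ and $G(x,\mu(T))$ admits a classical bounded, Lipschitz solution $u_\varepsilon$ (with bounds independent of $\varepsilon$, thanks to (H1)--(H2) and to the quadratic structure of~$H$), and then the linear Fokker--Planck equation produces a unique $m_\varepsilon\in C^0([0,T];\mathcal P_1)$. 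The map $\mu\mapsto m_\varepsilon$ is continuous and maps a suitable convex, compact subset of $C^0([0,T];\mathcal P_1)$ (obtained from uniform moment and equicontinuity bounds derived from the bounded drift $\partial_p H(x,Du_\varepsilon)$) into itself, so Schauder applies.

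\textbf{Passage to the limit.} The uniform Lipschitz bound on $u_\varepsilon$ yields a subsequence converging locally uniformly to some $u\in W^{1,\infty}(\R^2\times[0,T])$, which is a viscosity solution of (i) by the standard stability theorem. For the measures, the bounds on $\partial_p H(x,Du_\varepsilon)$ combined with the compact support of $m_0$ give uniform tightness and equicontinuity in the Kantorovich--Rubinstein metric, so up to a subsequence $m_\varepsilon\to m$ in $C^0([0,T];\mathcal P_1)$; passing to the limit in the distributional formulation of the regularized continuity equation yields (ii), where one has to be careful because $-\diver(m\,\partial_p H(x,Du))$ involves the product of a merely $L^\infty$ vector field and a measure, but since $F(\cdot,m_\varepsilon)\to F(\cdot,m)$ in $C^2$ by (H2), Lipschitz estimates on $Du_\varepsilon$ on compact sets allow to identify the limit. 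This yields a pair $(u,m)$ satisfying items~1)--3) of Definition~\ref{defsolmfg}.

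\textbf{Representation formula.} For the second part, the key ingredient is the optimal control problem with cost \eqref{Jgen} and dynamics \eqref{eq:HJ2} associated to the fixed coupling $F(\cdot,m)$, whose value function is $u$. I would first check that, for almost every initial point $x_0$, the optimal trajectory starting from $(x_0,0)$ is unique; here the results on uniqueness after the rest time (Theorem~\ref{th:nobifurc}) combined with the semiconcavity of~$u$ (coming from the $C^2$ bounds in (H2) and the structure of $H$) are crucial. Next, the Pontryagin maximum principle allows to express the optimal controls in feedback form through the horizontal gradient, yielding $\dot x^*(s)=-(u_{x_1}(x^*(s),s),h^2(x_1^*(s))u_{x_2}(x^*(s),s))$ almost everywhere along the optimal trajectory, i.e. the characteristic ODE \eqref{chflow}. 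Then the Ambrosio superposition principle represents the distributional solution $m$ of the continuity equation as a superposition of solutions of the ODE; uniqueness of the optimal trajectory from $m_0$-a.e. starting point forces this superposition to collapse to the push-forward of $m_0$ through the flow \eqref{chflow}.

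\textbf{Main obstacle.} The hardest step is the representation, more precisely the passage from ``optimal trajectory is unique $m_0$-a.e.'' to ``the distributional solution coincides with the push-forward''. The non-coercivity of~$H$ means that the drift $\partial_p H(x,Du)=(u_{x_1},h^2(x_1)u_{x_2})$ is only $L^\infty$, so classical Cauchy--Lipschitz theory for \eqref{chflow} fails; one cannot proceed as in \cite[Sect.~4.3]{C}. The argument must therefore rely on \emph{all} measures concentrated on trajectories of the ODE (as provided by the superposition principle), use the optimal control interpretation to show that such trajectories are optimal curves for the Bolza problem, and then invoke the uniqueness of optimal trajectories (Theorem~\ref{th:nobifurc}) starting from $m_0$-a.e. point to conclude. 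Establishing the relevant uniqueness statement in the degenerate Grushin-type setting, and verifying the semiconcavity of~$u$ sufficient to justify the Pontryagin feedback representation, are the most technical points.
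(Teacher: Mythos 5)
Your overall architecture is sound, and your treatment of the representation formula is essentially the paper's: Ambrosio superposition plus disintegration, then the observation that every absolutely continuous solution of \eqref{chflow} is an optimal trajectory (so that the Pontryagin feedback form of the control and the a.e.\ uniqueness of optimal trajectories collapse the disintegrated measures to Dirac masses $\delta_{\overline\gamma_x}$). For existence you take a genuinely different, but legitimate, route: you regularize the \emph{whole} MFG system, run Schauder at the viscous level, and then let $\varepsilon\to0$; the paper instead runs Schauder directly on the first-order system, using the separately established well-posedness of the degenerate continuity equation (itself proved by vanishing viscosity for a \emph{fixed} coupling $\overline m$). The authors note in a remark that your route also works; its price is that all uniform-in-$\varepsilon$ estimates must be carried through the fixed point and the final limit, whereas the paper confines the viscous approximation to one lemma.

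The one step that does not hold as written is your justification for passing to the limit in the product $m_\varepsilon\,\partial_pH(x,Du_\varepsilon)$. Neither the convergence $F(\cdot,m_\varepsilon)\to F(\cdot,m)$ in $C^2$ nor an $L^\infty$ bound on $Du_\varepsilon$ identifies the limit of this product: both factors a priori converge only weakly, and a product of two weakly convergent sequences need not converge to the product of the limits. If by ``Lipschitz estimates on $Du_\varepsilon$'' you mean a uniform $C^{1,1}$ bound on $u_\varepsilon$, that is not available here: only the one-sided bound $D^2u_\varepsilon\le C$ holds, and viscosity solutions of \eqref{eq:HJ1} genuinely develop gradient discontinuities. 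The missing ingredients are precisely those the paper isolates: (i) a semiconcavity bound $D^2u_\varepsilon\le C$ uniform in $\varepsilon$, which together with the locally uniform convergence $u_\varepsilon\to u$ yields $Du_\varepsilon\to Du$ \emph{almost everywhere} (see \cite[Theorem 3.3.3]{CS}); and (ii) a uniform bound $\|m_\varepsilon\|_{L^\infty}\le K$ --- itself obtained from the same semiconcavity bound via a supersolution argument --- so that $m_\varepsilon\rightharpoonup m$ weak-$*$ in $L^\infty_{loc}$ and the pairing of a bounded weak-$*$ convergent sequence with a uniformly bounded, a.e.\ convergent drift passes to the limit. Without (ii), a.e.\ convergence of the drift is useless, since the limiting measure could charge the null set where $Du_\varepsilon$ fails to converge. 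You do invoke semiconcavity later for the control-theoretic part, but it is needed already at this stage; once (i) and (ii) are inserted, your argument closes.
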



\begin{remark} Uniqueness holds under classical hypothesis on the monotonicity of $F$ and $G$ as in \cite{C}.
\end{remark}

This paper is organized as follows. In Section~\ref{OC} we will find some properties of the solution $u$ of the Hamilton-Jacobi equation~\eqref{eq:MFG1}-(i) with fixed $m$: we will prove that $u$ is Lipschitz continuous and semiconcave in $x$. Moreover still in this section we will establish a  crucial point of the paper: the uniqueness of the optimal trajectory of the associated control problem.
In Section~\ref{sect:c_eq} we study the continuity equation~\eqref{eq:MFG1}-(ii) where~$u$ is the solution of a Hamilton-Jacobi equation found in the previous section.
Section~\ref{sect:MFG} is devoted to the proof of the Theorem~\ref{thm:main}. 
Finally, the Appendix splits into two parts: in the former one, we give some results on the concatenation of optimal trajectories and the Dynamic Programming Principle while in the latter part we introduce the notion of the $G$-differentiability and we prove the main properties on the $G$-differentials which will be used along the paper.

\section{Formulation of the optimal control problem}\label{OC}
For every $0\le t<T$ and $ x:=({x}_1, {x}_2)\in\R^2$ we consider the following optimal control problem, where the functions $f,g,h$ satisfy the Hypothesis~\ref{BasicAss} here below.
\begin{definition}[Optimal Control Problem (OC)]\label{def:OCD}
\begin{equation}\label{def:OC}
\text{Minimize } J_t(x(\cdot),\alpha):
=\displaystyle\int_t^T\dfrac12|\alpha(s)|^2+f( x(s),s)\,ds+g(x(T))
\end{equation}
subject to $(x(\cdot), \alpha)\in \mathcal A(x,t)$, where
\begin{equation} \mathcal A(x,t):=\left\{(x(\cdot), \alpha)\in AC([t,T]; \R^2)\times L^2([t,T];\re^2):\, \textrm{\eqref{eq:HJ2} holds a.e. with } x(t)= x\right\}.
\end{equation}
A pair $(x(\cdot),\alpha)$ in $\mathcal A(x,t)$ is said to be admissible. We say that $x^*$ is an optimal trajectory if there is a control $\alpha^*$ such that $(x^*,\alpha^*)$ is optimal for~{\rm (OC)}. Also, we shall refer to the system~\eqref{eq:HJ2}
as to the dynamics of the optimal control problem~{\rm (OC)}.
\end{definition}

In what follows, the functions $f,g$ and $h$ satisfy the following conditions.
\begin{hypothesis} \label{BasicAss}
$f\in C^{0}([0,T],C^2(\R^2))$ and 
there exists a constant $C$ such that
$$\|f(\cdot,t)\|_{C^2(\R^2)} + \|g\|_{C^2(\R^2)} +  \|h\|_{C^2(\R)}\leq C,\qquad\forall t\in[0,T].$$
The set ${\cal{Z}}=\{z\in\R\;:\;h(z)=0\}$ is totally disconnected.
\end{hypothesis}
\begin{remark}
Our Hypothesis \ref{BasicAss} is very strong and we clearly do not need it all over
the paper. We much prefer to state a more restrictive result without the need to add further assumptions step by step.
\end{remark}

\begin{remark} Thanks to the boundedness of functions in our Hypothesis \ref{BasicAss}, it is equivalent to choose $ \mathcal {A}(x,t)$ or
\begin{equation}
\tilde{\mathcal{A}}(x,t):=\left\{(x(\cdot), \alpha)\in AC([t,T]; \R^2)\times L^1([t,T];\re^2):\,\textrm{$x$ satisfies \eqref{eq:HJ2} a.e.},\,
x(t)= x \right\},
\end{equation}
in the control problem.
\end{remark}
\begin{remark}\label{rem:uniquex} Notice that, given a control law $\alpha\in L^2([t,T];\R^2)$, the above Hypothesis~\ref{BasicAss} on $h$ implies that, given the initial point $ x$,  there is a \emph{unique} trajectory $x(\cdot)$ such that $(x(\cdot),\alpha)\in\mathcal A(x,t)$.
\end{remark}
\begin{remark} [Existence of optimal solutions] Hypothesis~\ref{BasicAss}, together with   \cite[Theorem 23.11]{Cla}, ensure that the optimal control problem~{\rm (OC)} admits a solution $(x^*,\alpha^*)$.
\end{remark}

\subsection{The Hamilton-Jacobi equation and the value function of the optimal control problem}\label{vf}
The aim of this section is to study the Hamilton-Jacobi equation~\eqref{eq:MFG1}-(i) with $m$ fixed, namely
\begin{equation}\label{eq:HJ1}
\left\{\begin{array}{ll}
-\partial_t u+\frac12 |D_Gu|^2=f(x, t)&\qquad \textrm{in }\re^2\times (0,T),\\
u(x,T)=g(x)&\qquad \textrm{on }\re^2
\end{array}\right.
\end{equation}
where~$f(x,t):=F(x, m)$ and $g(x):=G(x, m(T))$.
Under Hypothesis 2.1, we shall prove several regularity properties of the solution (especially Lipschitz continuity and semiconcavity) and mainly the uniqueness of optimal trajectories for the associated optimal control problem.
\begin{definition} The value function for the cost $J_t$ defined in \eqref{def:OC} is
\begin{equation}\label{repr}u(x,t):=\inf\left\{ J_t(x(\cdot), \alpha):\, (x(\cdot),\alpha)\in \mathcal A(x,t)\right\}.
\end{equation}
An optimal pair $(x^*(\cdot), \alpha^*)$ for the control problem {\rm (OC)} in Definition \ref{def:OCD} is also said to be optimal for $u(x,t)$.
\end{definition}
In the next lemma we show that the solution $u$ of \eqref{eq:HJ1} can be represented as the value function of the control problem {\rm (OC)} defined in \eqref{def:OCD}.
\begin{lemma}\label{valuefunction}
Under Hypothesis 2.1, the value function $u$, defined in~\eqref{repr}, is the unique bounded uniformly continuous viscosity solution to problem~\eqref{eq:HJ1}.
\end{lemma}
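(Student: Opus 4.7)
The plan is to verify the three classical ingredients of the control-theoretic approach to Hamilton-Jacobi equations: boundedness and uniform continuity of $u$, the Dynamic Programming Principle (DPP) satisfied by $u$, and the passage from the DPP to the viscosity equation, and finally to close by a comparison principle for viscosity solutions.

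First I would derive \emph{a priori} bounds. Using the admissible pair with $\alpha\equiv 0$, whose induced trajectory stays at the starting point $x$, one gets $u(x,t)\le (T-t)\|f\|_\infty+\|g\|_\infty$. The matching lower bound comes from discarding the nonnegative $\tfrac12|\alpha|^2$ term. The same inequality gives a uniform \emph{a priori} bound $\int_t^T|\alpha^*(s)|^2\,ds\le M$ on any $\varepsilon$-optimal control (for $\varepsilon$ small and uniformly in $(x,t)$), which I then use to prove uniform continuity: given $x,\bar x$, take an $\varepsilon$-optimal $\alpha$ for $u(x,t)$ and use it from $\bar x$; then $x_1(s)-\bar x_1(s)\equiv x_1-\bar x_1$, while Gronwall combined with Hypothesis~\ref{BasicAss} and Cauchy-Schwarz (to pass from $L^2$ to $L^1$ norms of $\alpha_2$) yields $|x(s)-\bar x(s)|\le C|x-\bar x|$ on $[t,T]$. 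The Lipschitz bounds on $f$ and $g$ then give $|u(x,t)-u(\bar x,t)|\le C|x-\bar x|$. Continuity in $t$ is handled similarly via the trivial control.

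Next I would establish the DPP: for all $t\le r\le T$,
\begin{equation*}
u(x,t)=\inf_{(x(\cdot),\alpha)\in\mathcal A(x,t)}\left\{\int_t^r\left[\tfrac12|\alpha(s)|^2+f(x(s),s)\right]ds+u(x(r),r)\right\}.
\end{equation*}
The $\le$ direction follows by concatenation of an admissible pair on $[t,r]$ with an $\varepsilon$-optimal one for $u(x(r),r)$ on $[r,T]$; the $\ge$ direction by restriction of an $\varepsilon$-optimal pair for $u(x,t)$ to the two subintervals. From the DPP, the standard argument (Bardi--Capuzzo Dolcetta) delivers the viscosity subsolution property by testing against smooth $\varphi$ and using the constant controls $\alpha\equiv(a_1,a_2)$ to exploit the convexity of $H$ in $p$, and the supersolution property by taking an $\varepsilon$-optimal control and passing to the limit. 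The terminal condition $u(x,T)=g(x)$ is immediate from \eqref{repr}.

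Uniqueness will follow from a comparison principle. The Hamiltonian $H(x,p)=\tfrac12(p_1^2+h^2(x_1)p_2^2)$ is convex in $p$, and by Hypothesis~\ref{BasicAss} satisfies the standard structural bound
\begin{equation*}
|H(x,p)-H(y,p)|\le 2\|h\|_\infty\|h'\|_\infty\,|x-y|\,|p|^2,
\end{equation*}
so the classical Crandall--Ishii--Lions doubling-of-variables argument applies to bounded uniformly continuous sub/supersolutions on $\R^2\times(0,T)$ (the boundedness of $u$ takes care of behaviour at infinity). This gives $u$ as the unique such solution. The main subtlety is that $H$ is non-coercive, so one cannot invoke off-the-shelf theorems phrased for coercive Hamiltonians; however, the quadratic structure together with the Lipschitz dependence of $h^2$ on $x_1$ is exactly what the comparison argument requires, so no genuinely new difficulty arises at this stage — the non-coercivity will only begin to bite in the subsequent analysis of optimal trajectories.
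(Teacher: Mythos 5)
Your proposal follows essentially the same route as the paper: the paper's proof is two lines long, invoking the Dynamic Programming Principle (its Proposition A on concatenation/DPP in the Appendix) to show that the value function solves \eqref{eq:HJ1}, citing \cite[eq.~(7.40) and Thm.~7.4.14]{CS} for uniqueness, and getting boundedness from the control $\alpha\equiv 0$ exactly as you do. Your expansion of the DPP and of the sub/supersolution verification is the standard content of those citations, and your a priori bounds and continuity estimates are consistent with what the paper proves separately in Lemma~\ref{L1}.

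The one step you should not wave through is the comparison principle. The structural bound $|H(x,p)-H(y,p)|\le C\,|x-y|\,|p|^2$ is \emph{not} of the classical form $\omega\bigl(|x-y|(1+|p|)\bigr)$ required to compare two arbitrary bounded uniformly continuous sub/supersolutions: in the doubling argument the relevant term is $|x_\varepsilon-y_\varepsilon|\,|p_\varepsilon|^2$ with $p_\varepsilon=(x_\varepsilon-y_\varepsilon)/\varepsilon$, i.e.\ $|x_\varepsilon-y_\varepsilon|^3/\varepsilon^2$, and the standard estimate $|x_\varepsilon-y_\varepsilon|^2/\varepsilon\to 0$ does not control it; moreover, since $H$ is non-coercive, a bounded subsolution need not be Lipschitz in $x_2$ where $h$ vanishes, so you cannot upgrade the regularity of the competitor. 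The gap closes because uniqueness in the BUC class only requires comparing an arbitrary BUC sub/supersolution against the value function itself, which is Lipschitz in $x$ (Lemma~\ref{L1}); at the maximum of the doubled functional this forces $|p_\varepsilon|\le \mathrm{Lip}(u)$, whence $|x_\varepsilon-y_\varepsilon|\,|p_\varepsilon|^2\to 0$ and the argument goes through. This is in substance what the cited uniqueness theorem of Cannarsa--Sinestrari does; you should make the use of the Lipschitz regularity of one of the two functions explicit rather than asserting that the classical BUC comparison applies off the shelf.
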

\begin{proof}
The Dynamic Programming Principle (stated in Proposition~\ref{proposition:carloclaudio} in Appendix below) yields that the value function is a solution to problem~\eqref{eq:HJ1}. Moreover applying classical results uniqueness (see, for example, \cite[eq. (7.40) and Thm. 7.4.14]{CS}), we obtain the statement.
Moreover, taking as admissible control the law $\alpha=0$, from the representation formula \eqref{repr}, using the boundedness of $f$ and $g$, we have $\vert u(x,t)\vert \leq C_T$.
\end{proof}

\begin{lemma}[Lipschitz continuity] \label{L1}
Under Hypothesis 2.1, there hold:
\begin{enumerate}
\item $u(x,t)$ is Lipschitz continuous with respect to the spatial variable~$x$,
\item $u(x,t)$ is Lipschitz continuous with respect to the time variable~$t$.
\end{enumerate}
\end{lemma}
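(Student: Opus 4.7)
Both statements rely on the dynamic programming principle and the quadratic structure of the cost. The single quantitative ingredient feeding both parts is a uniform $L^2$-bound on optimal controls: comparing with $\alpha\equiv 0$, which is admissible with cost at most $(T+1)C$, and using the obvious lower bound $u(x,t)\geq-(T+1)C$, every optimal pair $(x^*,\alpha^*)$ satisfies
$$
\tfrac12\|\alpha^*\|_{L^2([t,T])}^2\le u(x,t)+(T+1)C\le 2(T+1)C=:M.
$$

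\textbf{Step 1: spatial Lipschitz.} Fix $t\in[0,T]$ and $x,y\in\R^2$, pick an optimal pair $(y^*,\alpha^*)$ for $u(y,t)$, and use Remark~\ref{rem:uniquex} to let $x^*$ denote the unique trajectory generated by $\alpha^*$ from $x$. The decoupled first equation gives $x_1^*(s)-y_1^*(s)\equiv x_1-y_1$, and the Lipschitz continuity of $h$ together with Cauchy--Schwarz yields
$$
|x_2^*(s)-y_2^*(s)|\le|x_2-y_2|+C|x_1-y_1|\int_t^T|\alpha_2^*(\tau)|\,d\tau\le|x_2-y_2|+C\sqrt{2MT}\,|x_1-y_1|,
$$
whence $\|x^*-y^*\|_\infty\le K_1|x-y|$. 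Since $f(\cdot,\tau)$ and $g$ are $C$-Lipschitz, the cost difference $J_t(x^*,\alpha^*)-J_t(y^*,\alpha^*)$ is bounded by $L_x|x-y|$ for some $L_x$ depending only on $C,T$, so $u(x,t)\le u(y,t)+L_x|x-y|$; the reverse inequality follows by swapping $x$ and $y$.

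\textbf{Step 2: temporal Lipschitz.} Fix $x$ and $0\le s<t\le T$. The estimate $u(x,s)-u(x,t)\le C(t-s)$ is immediate: the pair consisting of $\alpha\equiv 0$ on $[s,t]$ (whose trajectory stays at $x$) concatenated with an optimal control for $u(x,t)$ on $[t,T]$ is admissible for $u(x,s)$ and has cost at most $C(t-s)+u(x,t)$. For the reverse bound, let $(\bar x,\bar\alpha)$ be optimal for $u(x,s)$ and apply the DPP of Proposition~\ref{proposition:carloclaudio}:
$$
u(x,s)=\int_s^t\Bigl[\tfrac12|\bar\alpha|^2+f(\bar x,\tau)\Bigr]d\tau+u(\bar x(t),t)\ge\tfrac12 A^2-C(t-s)+u(x,t)-L_x|\bar x(t)-x|,
$$
with $A:=\|\bar\alpha\|_{L^2([s,t])}$ and where Step 1 was used to compare $u(\bar x(t),t)$ with $u(x,t)$. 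The dynamics give $|\bar x(t)-x|\le\sqrt{1+C^2}\sqrt{t-s}\,A$, and maximizing the resulting quadratic expression in $A\ge 0$ yields
$$
u(x,t)-u(x,s)\le C(t-s)+\sup_{A\ge 0}\Bigl[L_x\sqrt{1+C^2}\sqrt{t-s}\,A-\tfrac12 A^2\Bigr]=\bigl(C+\tfrac12 L_x^2(1+C^2)\bigr)(t-s).
$$

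\textbf{Main obstacle.} The only subtle ingredient is the second direction of the time estimate. The crude Cauchy--Schwarz bound $|\bar x(t)-x|=O(\sqrt{t-s})$ used in isolation would only give $1/2$-H\"older continuity in $t$; the Lipschitz upgrade requires playing the displacement against the kinetic cost $\tfrac12 A^2$ via a Young-type completion of the square, and this absorption step genuinely consumes the spatial Lipschitz constant $L_x$. For this reason the two items of the lemma cannot be decoupled: spatial Lipschitz continuity must be established first, then fed into the DPP identity to conclude temporal Lipschitz continuity.
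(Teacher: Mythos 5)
Your proof is correct. For the spatial estimate you follow essentially the same route as the paper: transplant an (almost) optimal control from one starting point to the other, exploit that the first component of the dynamics is decoupled so that $x_1^*-y_1^*$ is constant, and control the drift mismatch in the second component by the Lipschitz constant of $h$ times the $L^1$ (hence, via Cauchy--Schwarz, the $L^2$) norm of $\alpha_2^*$; the paper works with $\varepsilon$-optimal controls but this is immaterial. For the temporal estimate your argument genuinely differs from the paper's. The paper invokes the $L^\infty$ bound on optimal controls coming from the Pontryagin/feedback representation (Corollary~\ref{coro:regularity}, a forward reference to the next section), which gives $|x(s)-x|\le C(s-t)$ at unit rate and reduces the time regularity to the standard argument of \cite[Lemma 4.7]{C}. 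You instead use only the elementary $L^2$ energy bound on $\bar\alpha$, accept the a priori displacement $|\bar x(t)-x|=O(\sqrt{t-s}\,\|\bar\alpha\|_{L^2})$, and recover the Lipschitz rate by letting the kinetic term $\tfrac12\|\bar\alpha\|_{L^2([s,t])}^2$ in the DPP identity absorb the linear term through a completion of the square. This buys self-containedness (no appeal to the maximum principle or to results stated later in the paper) at the cost of making Part 2 depend on the spatial Lipschitz constant from Part 1 — a dependence you correctly flag, and which is harmless since Part 1 is proved independently. Both arguments are valid; yours is arguably the cleaner one from the point of view of logical ordering.
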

\begin{proof}  In this proof, $C_T$ will denote a constant which may change from line to line but it always depends only on the constants in the assumptions (especially the Lipschitz constants of $f$ and $g$) and on~$T$.\\
1. Let $t$ be fixed. We follow the proof of \cite[Lemma 4.7]{C}.
Let $\alpha^\varepsilon$ be an $\varepsilon$-optimal control for $u(x,t)$ i.e.,
\begin{equation}
\label{eq:HJ31}
u(x_1, x_2, t)+\varepsilon\geq \int_t^T\frac12 |\alpha^{\varepsilon}(s)|^2+f(x(s),s)\,ds+g(x(T))
\end{equation}
where~$ x(\cdot)$ obeys to the dynamics \eqref{eq:HJ2} with $\alpha=\alpha^{\varepsilon}$.
From the boundedness of $u$ (established in Lemma~\ref{valuefunction}) and our assumptions, there exists a constant~$C_T$ such that $\Vert \alpha^{\varepsilon}\Vert_{L^2(t,T)}\leq C_T$.\\
We consider the path $x^*(s)$ starting from $y=(y_1,y_2)$, with control $\alpha^{\varepsilon}(\cdot)$.
Hence
\begin{eqnarray*}
x_1^*(s)&=&y_1+\int_t^s\alpha^{\varepsilon}_1(\tau) \,d\tau=y_1-x_1+x_1(s)\\
x_2^*(s)&=&y_2+\int_t^s h(y_1-x_1+x_1(\tau))\alpha_2^{\varepsilon}(\tau)\,d\tau\\
&=&y_2-x_2+x_2(s)
+\int_t^s h(y_1-x_1+x_1(\tau))\alpha_2^{\varepsilon}(\tau)-h(x_1(\tau))\alpha_2^{\varepsilon}(\tau)\,d\tau.
\end{eqnarray*}
Using the Lipschitz continuity of $f$ and $h$ and the boundedness of $h$  we get
\begin{eqnarray*}
&&f(x^*(s),s)=\\
&& \leq f(x_1(s),x_2(s),s)
+L\vert y_1-x_1\vert+\\
&&\phantom{AAAAA}+L\left\vert y_2-x_2+\int_t^s h(y_1-x_1+x_1(\tau))\alpha_2^{\varepsilon}(\tau)-h(x_1(\tau))\alpha_2^{\varepsilon}(\tau)\,d\tau\right\vert\\
&&
\leq f(x_1(s),x_2(s),s)
+L\vert y_1-x_1\vert+L\vert y_2-x_2\vert+L'\vert y_1-x_1 \vert \int_t^s \vert\alpha_2^{\varepsilon}(\tau)\vert \,d\tau\\
&&
\leq f(x(s),s)
+L\vert y_1-x_1\vert+L\vert y_2-x_2\vert+L'\vert y_1-x_1 \vert T^{\frac 1 2}  \bigg(\int_t^s(\alpha_2^{\varepsilon}(s))^2 \,ds\bigg)^{\frac 1 2}.
\end{eqnarray*}
By the same calculations for $g$ and substituting inequality~\eqref{eq:HJ31} in
\begin{equation*}
u(y_1, y_2, t)\leq \int_t^T\frac12 |\alpha^{\varepsilon}(s)|^2+f(x^*(s),s)\,ds+g(x^*(T)),
\end{equation*}
we get
\begin{equation*}
u(y_1, y_2, t)\leq u(x_1, x_2, t)+C_T(\vert y_2-x_2\vert+\vert y_1-x_1 \vert).
\end{equation*}
Reversing the role of $x$ and $y$ we get the result.\\
2.
We follow the same arguments as  those in the proof of \cite[Lemma 4.7]{C}; to this end, we recall that $h$ is bounded and we observe that there holds
$$ |x(s)-x|\leq C(s-t)\|\alpha\|_{\infty}$$ since
$\alpha$ is bounded  as proved in Corollary \ref{coro:regularity} in the next section.
\end{proof}

In the following lemma we establish the semiconcavity of $u(x,t)$ taking advantage of the regularity hypothesis \ref{BasicAss}. This property will be needed in the study of the relationship between the regularity of the value function and the uniqueness of the optimal trajectories. It is worth to remark that it is possible to prove that $u(x,t)$ is also semiconcave with respect to the $\chi$-lines associated to the Grushin dynamics, as introduced in \cite[Example 2.4]{BD}, but this does not seem to be useful to our results.

\begin{lemma}[Semi-concavity] Under Hypothesis 2.1, the value function~$u$, defined in \eqref{repr}, is semiconcave with respect to the variable~$x$.
\end{lemma}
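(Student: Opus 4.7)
The plan is to adapt the classical semiconcavity argument for optimal control problems (as in Cannarsa--Sinestrari~\cite{CS}) to the Grushin-type dynamics, using the initial control ``freezing'' trick. Since translating the starting point changes the $x_2$-trajectory in a nonlinear way (through $h$), the technical point is to exploit the $C^2$ regularity of $h$ (given by Hypothesis~\ref{BasicAss}) to control the discrepancy between the perturbed trajectories.

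\medskip

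\noindent\textbf{Step 1 (Setup).} Fix $t\in[0,T]$, $x=(x_1,x_2)\in\R^2$ and an increment $z=(z_1,z_2)\in\R^2$. Let $(x^*(\cdot),\alpha^*)$ be optimal for $u(x,t)$. From the boundedness of $u$ (Lemma~\ref{valuefunction}), taking $\alpha\equiv 0$ as a comparison and using boundedness of $f,g$, one obtains $\|\alpha^*\|_{L^2(t,T)}\le C_T$. Using the \emph{same} control $\alpha^*$, define trajectories $y^\pm(\cdot)$ starting respectively at $x\pm z$ via \eqref{eq:HJ2}; explicitly
\begin{equation*}
y_1^\pm(s)=x_1^*(s)\pm z_1,\qquad y_2^\pm(s)=x_2^*(s)\pm z_2+\int_t^s\bigl[h(x_1^*(\tau)\pm z_1)-h(x_1^*(\tau))\bigr]\alpha_2^*(\tau)\,d\tau.
\end{equation*}
Both $(y^+,\alpha^*)\in\mathcal A(x+z,t)$ and $(y^-,\alpha^*)\in\mathcal A(x-z,t)$.

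\medskip

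\noindent\textbf{Step 2 (Key geometric estimates on the trajectories).} Using Lipschitz continuity of $h$ and the Cauchy--Schwarz bound on $\int|\alpha_2^*|$, one gets $\|y^\pm-x^*\|_\infty\le C_T|z|$. The crucial cancellation is at second order: since
\begin{equation*}
(y^+ + y^- - 2x^*)_1\equiv 0,\qquad (y^+ + y^- -2x^*)_2(s)=\int_t^s\bigl[h(x_1^*+z_1)+h(x_1^*-z_1)-2h(x_1^*)\bigr]\alpha_2^*\,d\tau,
\end{equation*}
the $C^2$ bound on $h$ (Hypothesis~\ref{BasicAss}) yields $|h(r+z_1)+h(r-z_1)-2h(r)|\le C z_1^2$, whence $\|y^++y^--2x^*\|_\infty\le C_T|z|^2$.

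\medskip

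\noindent\textbf{Step 3 (Comparison of costs).} By optimality of $(x^*,\alpha^*)$ and admissibility of the two perturbed pairs,
\begin{equation*}
u(x+z,t)+u(x-z,t)-2u(x,t)\le J_t(y^+,\alpha^*)+J_t(y^-,\alpha^*)-2J_t(x^*,\alpha^*).
\end{equation*}
The kinetic terms $\tfrac12|\alpha^*|^2$ cancel exactly, so only the running cost and the terminal cost remain. Expanding $f(\cdot,s)$ and $g(\cdot)$ around $x^*(s)$ and $x^*(T)$ via a second-order Taylor estimate (allowed by the $C^2$ bounds in Hypothesis~\ref{BasicAss}) gives, for $\phi\in\{f(\cdot,s),g\}$,
\begin{equation*}
\phi(y^+)+\phi(y^-)-2\phi(x^*)\le D\phi(x^*)\cdot(y^++y^--2x^*)+\tfrac{C}{2}\bigl(|y^+-x^*|^2+|y^--x^*|^2\bigr).
\end{equation*}
Plugging in the bounds from Step~2, the linear term is $\le C_T|z|^2$ (via $\|D\phi\|_\infty\le C$ and the $O(|z|^2)$ estimate on $y^++y^--2x^*$) and the quadratic term is $\le C_T|z|^2$. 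Integrating in $s$ on $[t,T]$ and adding the terminal contribution at $s=T$ yields
\begin{equation*}
u(x+z,t)+u(x-z,t)-2u(x,t)\le C_T|z|^2,
\end{equation*}
which is the desired semiconcavity with respect to $x$, uniformly in $t$.

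\medskip

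\noindent\textbf{Main obstacle.} The only nontrivial point is the second-order cancellation in the $x_2$-component, which is not automatic because of the nonlinear dependence on $h(x_1)$; it is precisely here that one needs the full $C^2$ bound on $h$ (rather than only Lipschitz regularity), together with the uniform $L^2$-bound on optimal controls. Everything else is a standard adaptation of the ``same-control'' perturbation trick.
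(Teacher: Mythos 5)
Your proposal is correct and follows essentially the same strategy as the paper: reuse the (near-)optimal control of the intermediate point for the translated starting points, and exploit the second-order cancellation $h(r+z_1)+h(r-z_1)-2h(r)=O(z_1^2)$ coming from the $C^2$ bound on $h$ together with the uniform $L^2$ bound on optimal controls, before Taylor-expanding $f$ and $g$. The only cosmetic differences are that you work with the symmetric midpoint inequality $u(x+z,t)+u(x-z,t)-2u(x,t)\le C|z|^2$ (equivalent to semiconcavity with linear modulus for Lipschitz functions) while the paper uses general convex combinations $x_\lambda=\lambda x+(1-\lambda)y$ with an $\varepsilon$-optimal control.
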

\begin{proof}
For any~$x,y\in \re^2$ and $\lambda\in[0,1]$, consider~$x_{\lambda}:=\lambda x+(1-\lambda)y$. Let $\alpha$ be an~$\varepsilon$-optimal control for~$u(x_{\lambda}, t)$; we set
$$x_{\lambda}(s)= (x_{\lambda,1}(s), x_{\lambda,2}(s)):=
\bigg(x_{\lambda,1}+\int_t^s\,\alpha_1(\tau)\,d\tau,\  x_{\lambda,2}+\int_t^s\,h(x_{\lambda,1}(\tau))\alpha_2(\tau)\,d\tau\bigg).$$
Let $x(s)$ and $y(s)$ satisfy \eqref{eq:HJ2} with initial condition respectively $x$ and $y$ still with the same control $\alpha$, $\varepsilon$-optimal for~$u(x_{\lambda}, t)$.
We have to estimate~$\lambda u(x,t) +(1-\lambda)u(y,t)$ in terms of $u(x_{\lambda}, t)$. To this end, arguing as in the proof of~\cite[Lemma 4.7]{C},  we have to estimate the terms $\lambda f(x(s), s) +(1-\lambda)f(y(s), s)$ and $\lambda g(x(T))+(1-\lambda)g(y(T)).$\\
We explicitly provide the calculations for the second component $x_2(s)$ since the calculations for $x_1(s)$ are the same as in \cite{C}.
We have
\begin{eqnarray*}
x_2(s)&=& x_2+\int_t^s h(x_1(\tau))\alpha_2(\tau)\,d\tau\\
&=&x_2-x_{\lambda,2}+x_{\lambda,2}(s)+\int_t^s\,(h(x_{1}(\tau))-h(x_{\lambda,1}(\tau)))\alpha_2(\tau)\,d\tau,
\end{eqnarray*}
and analogously for $y_2(s)$.
For the sake of brevity we provide the explicit calculations only for $f$ and we omit the analogous ones for $g$; we write $f(x_1,x_2):=f(x_1, x_2, s)$.
We have
\begin{equation*}
\begin{array}{l}
\lambda f(x(s))+(1-\lambda)f(y(s))=\\
\qquad \lambda f\left(x_1(s),x_{\lambda,2}(s)+x_2-x_{\lambda,2}+\displaystyle\int_t^s\,
(h(x_{1}(\tau))-h(x_{\lambda,1}(\tau)))\alpha_2(\tau)\,d\tau\right)\\
\qquad+(1-\lambda)f\left(y_1(s), x_{\lambda,2}(s)+y_2-x_{\lambda,2}+\displaystyle\int_t^s\,(h(y_{1}(\tau))-h(x_{\lambda,1}(\tau)))\alpha_2(\tau)\,d\tau\right).
\end{array}
\end{equation*}
In the Taylor expansion of $f$ centered in  $x_{\lambda}(s)$ the contribution of the first variable can be dealt with as in \cite{C}. Assuming without any loss of generality $x_1=y_1$, the contribution of the second variable gives
\begin{multline*}
\lambda f(x(s))+(1-\lambda)f(y(s))=
f(x_{\lambda}(s))+ \partial_{x_2}f(x_{\lambda}(s))\bigg(\lambda(x_2-x_{\lambda,2})+(1-\lambda)(y_2-x_{\lambda,2})\\
+\lambda\int_t^s\,(h(x_{1}(\tau))-h(x_{\lambda,1}(\tau)))\alpha_2(\tau)\,d\tau)\\
+(1-\lambda)\int_t^s\,(h(y_{1}(\tau))-h(x_{\lambda,1}(\tau)))\alpha_2(\tau)\,d\tau)\bigg)+R,
\end{multline*}
where $R$ is the error term of the expansion, namely
\begin{multline}\label{R}
R=\lambda \frac{\partial_{x_2,x_2}^2f(\xi_1)}{2}\bigg(x_2- x_{\lambda,2}+ \int_t^s\,(h(x_{1}(\tau))-h(x_{\lambda,1}(\tau)))\alpha_2(\tau)\,d\tau\bigg)^2\\
+(1- \lambda) \frac{\partial_{x_2,x_2}^2f(\xi_2)}{2}\bigg(y_2- x_{\lambda,2}+ \int_t^s\,(h(y_{1}(\tau))-h(x_{\lambda,1}(\tau)))\alpha_2(\tau)\,d\tau\bigg)^2,
\end{multline}
for suitable $\xi_1, \xi_2\in \re^2$.\\
Since
$\lambda(x_2-x_{\lambda,2})+(1-\lambda)(y_2-x_{\lambda,2})=0$, we get
\begin{equation}\label{convf}
\lambda f(x(s))+(1-\lambda)f(y(s))=f(x_{\lambda}(s))
+ \partial_{x_2}f(x_{\lambda}(s))\int_t^s\,I(\tau)\alpha_2(\tau)\,d\tau+R,
\end{equation}
with $I(\tau):=-h(x_{\lambda,1}(\tau))+\lambda h(x_{1}(\tau)) +(1-\lambda)h(y_{1}(\tau)).$ Now, our aim is to estimate~$I(\tau)$.
Since $x_{\lambda,1}(\tau)=\lambda x_1(\tau)+(1-\lambda)y_1(\tau)$, $x_{1}(\tau)-x_{\lambda,1}(\tau)= (1-\lambda)(x_1-y_1)$ and $y_{1}(\tau)-x_{\lambda,1}(\tau)=\lambda(y_1-x_1)$, the Taylor expansion for $h$ centered in $x_{\lambda,1}(\tau)$ yields
$$I(\tau) =\frac{1}{2}  (1-\lambda)\lambda(y_{1}-x_{1})^2[ (1-\lambda)h^{''}(\overline \xi)+\lambda h^{''}(\widetilde{\xi})],
$$
for suitable $\overline{\xi}, \widetilde{\xi}\in\re$.
Our Hypothesis 2.1 entails
$$|I(\tau)| \leq (1-\lambda)\lambda C(y_1-x_{1})^2.$$
Replacing the inequality above in~\eqref{convf}, we obtain
\begin{equation}\label{stimaf}
\lambda f(x_2(s))+(1-\lambda)f(y_2(s))\leq \\
f(x_{\lambda,2}(s))+
 C^2T(1-\lambda)\lambda (y_{1}-x_{1})^2+ R.
\end{equation}
Let us now estimate the error term $R$ in \eqref{R}.
We have
\begin{eqnarray*}
&&\bigg(x_2- x_{\lambda,2}+ \int_t^s\,(h(x_{1}(\tau))-h(x_{\lambda,1}(\tau)))\alpha_2(\tau)\,d\tau\bigg)^2
\\
&&\qquad\leq 2(x_2- x_{\lambda,2})^2+ 2\bigg(\int_t^s\,(h(x_{1}(\tau))-h(x_{\lambda,1}(\tau)))\alpha_2(\tau)\,d\tau\bigg)^2\\
&& \qquad\leq2{(1-\lambda)}^2(x_2-y_2)^2+ 2C{(1-\lambda)}^2(x_1-y_1)^2\leq C(1-\lambda)^2|x-y|^2
\end{eqnarray*}
and, analogously
\begin{eqnarray*}
  &&\bigg(y_2- x_{\lambda,2}+ \int_t^s\,(h(y_{1}(\tau))-h(x_{\lambda,1}(\tau)))\alpha_2(\tau)\,d\tau\bigg)^2 \leq  C\lambda^2|x-y|^2
\end{eqnarray*}
Then, replacing these two inequalities in~\eqref{R}, we infer
\begin{equation}\label{resto}
R\leq C(1-\lambda) \lambda|x-y|^2.
\end{equation}
Taking into account~\eqref{resto} and~\eqref{stimaf}, we get the semiconcavity of $u$.
\end{proof}

\subsection{Necessary conditions and regularity for the optimal trajectories}
The application of the Maximum Principle yields the following necessary conditions.
\begin{proposition}[Necessary conditions for optimality]\label{prop:pontriagin}
\label{MPP}
Let $(x^*, \alpha^*)$ be optimal for {\rm (OC)}. There exists an arc $p\in AC([t,T];\R^2)$, hereafter called the costate,  such that
\begin{enumerate}
\item The pair $(\alpha^*, p)$ satisfies
the {\it adjoint equations}: for a. e. $s\in[t,T]$
\begin{eqnarray}
&&p_1'=-p_2h'(x_1^*)\alpha_2^*+f_{x_1}( x^*, s)\label{tag:adjoint1}\\
&&p_2'=f_{x_2}( x^*,s),\label{tag:adjoint2}
\end{eqnarray}
the {\it transversality condition}
\begin{equation}\label{tag:transversality}
-p(T)=D g(x^*(T))
\end{equation}
together with the {\it maximum condition}
\begin{multline}\label{tag:max}
\max_{\alpha=(\alpha_1,\alpha_2)\in\R^2}p_1(s)\alpha_1+p_2(s)h(x_1^*(s))\alpha_2-\dfrac{|\alpha|^2}2
=\\=
p_1(s)\alpha_1^*(s)+p_2(s)h(x_1^*(s))\alpha_2^*(s)-\dfrac{|\alpha^*(s)|^2}2\text{ a. e.  }s\in [t,T].\end{multline}
\item The optimal control $\alpha^*$ is given by
\begin{equation}\begin{cases}
\alpha^*_1=p_1\\
\alpha^*_2=p_2h(x^*_1)\end{cases}\text{ a.e on }[t,T].\label{tag:alpha*}
\end{equation}
\item The pair
$(x^*, p)$ satisfies the system of differential equations: for a. e. $s\in[t,T]$
\begin{eqnarray}
&&x'_1=  p_1\label{tag:1} \\
&&x_2'= h^2(x_1)p_2 \label{tag:2}\\
&&p_1'=  -p_2^2h'(x_1)h(x_1)+f_{x_1}(x,s)\label{tag:3}\\
&&p_2'=f_{x_2}(x,s)\label{tag:4}
\end{eqnarray}
with the mixed boundary conditions $x^*(t)= x$, $p(T)=-D g(x^*(T))$.
\end{enumerate}
\end{proposition}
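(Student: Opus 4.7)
The plan is to invoke Pontryagin's Maximum Principle in its smooth, unconstrained version (for instance, the statement given in Clarke's book \cite{Cla}) applied to the Bolza problem {\rm (OC)}. Under Hypothesis~\ref{BasicAss} the dynamics $(x,\alpha)\mapsto (\alpha_1,h(x_1)\alpha_2)$ is $C^2$ in $x$ and affine in $\alpha$, while the running cost is $C^2$ and strictly convex--quadratic in $\alpha$. Existence of an optimal pair $(x^*,\alpha^*)$ has been recalled in the remark before Section~\ref{vf}, and the $\tfrac12|\alpha|^2$ term in the integrand, together with the boundedness of $f$ and $g$, gives an \emph{a priori} $L^2$-bound on $\alpha^*$; combined with the boundedness of $h$ this guarantees that the optimal trajectory stays bounded on $[t,T]$, so there is no issue with global extendibility and no need to localize the PMP.

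With this in mind, I would introduce the control Hamiltonian
\[
\mathcal{H}(x,p,\alpha,s):=p_1\alpha_1+p_2\,h(x_1)\alpha_2-\tfrac12|\alpha|^2-f(x,s),
\]
and invoke the PMP, which produces an absolutely continuous costate arc $p:[t,T]\to\R^2$ such that $(x^*)'=\partial_p\mathcal{H}$, $p'=-\partial_x\mathcal{H}$ at $(x^*,p,\alpha^*,s)$, the maximum condition \eqref{tag:max}, and the transversality condition $p(T)=-Dg(x^*(T))$ arising from the terminal cost $g(x^*(T))$ (no constraints on $x^*(T)$). A direct computation of $\partial_{x_1}\mathcal{H}$ and $\partial_{x_2}\mathcal{H}$ yields exactly the adjoint equations \eqref{tag:adjoint1}--\eqref{tag:adjoint2}, which together with the transversality and the maximum condition proves part~1.

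For part~2, fix $s$ and note that the function $\alpha\mapsto p_1(s)\alpha_1+p_2(s)h(x_1^*(s))\alpha_2-\tfrac12|\alpha|^2$ is strictly concave; its unique maximizer is found by setting the gradient to zero, giving $\alpha_1^*(s)=p_1(s)$ and $\alpha_2^*(s)=p_2(s)h(x_1^*(s))$ for a.e.\ $s\in[t,T]$, which is exactly \eqref{tag:alpha*}. For part~3, I substitute this expression for $\alpha^*$ first into the dynamics \eqref{eq:HJ2}, which yields \eqref{tag:1}--\eqref{tag:2}, and then into the adjoint equations \eqref{tag:adjoint1}--\eqref{tag:adjoint2}: the term $-p_2 h'(x_1^*)\alpha_2^*$ in \eqref{tag:adjoint1} becomes $-p_2^2\,h(x_1^*)h'(x_1^*)$, producing \eqref{tag:3}, while \eqref{tag:4} is unchanged. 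The mixed boundary conditions $x^*(t)=x$ (from admissibility) and $p(T)=-Dg(x^*(T))$ (from transversality) close the system.

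There is no substantial obstacle in this argument: the only point deserving a line of justification is the applicability of the smooth PMP despite the fact that $h$ may vanish. This degeneracy does not enter the necessary conditions themselves; it only affects the \emph{qualitative} behaviour of $\alpha_2^*=p_2 h(x_1^*)$, which vanishes wherever $h(x_1^*)=0$. This observation will matter later (for instance in Theorem~\ref{th:nobifurc} on uniqueness after the rest time) but plays no role in establishing the PMP conditions here.
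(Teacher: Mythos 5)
Your proposal is correct and follows essentially the same route as the paper: both invoke the Pontryagin Maximum Principle from Clarke's book (the paper cites Theorem~22.17 together with Corollary~22.3 to get normality from the free endpoint, which you address implicitly by noting the absence of terminal constraints), derive \eqref{tag:alpha*} from the strict concavity of the Hamiltonian in $\alpha$, and obtain \eqref{tag:1}--\eqref{tag:4} by substitution into the dynamics and adjoint equations.
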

\begin{proof} 1. Hypothesis~\ref{BasicAss} ensures the validity of the assumptions of the Maximum Principle \cite[Theorem 22.17]{Cla}. Since the endpoint is free,  \cite[Corollary 22.3]{Cla} implies that the deduced necessary conditions  hold in normal form: the claim follows directly.

2. The maximum condition \eqref{tag:max} implies that
\[D_{\alpha}\left(p_1(s)\alpha_1+p_2(s)h(x_1^*(s))\alpha_2-\dfrac{|\alpha|^2}2\right)_{\alpha=\alpha^*}=0\quad \text{a. e. }s\in [t, T]\]
from which we get \eqref{tag:alpha*}.

3. Conditions \eqref{tag:1} -- \eqref{tag:2} follow directly from the dynamics~\eqref{eq:HJ2} replacing $\alpha_1^*, \alpha_2^*$ by means of \eqref{tag:alpha*}. Condition~\eqref{tag:3} follows similarly from \eqref{tag:adjoint1}, whereas   \eqref{tag:4} coincides with \eqref{tag:adjoint2}.
\end{proof}
\begin{remark}\label{rem:fixedendpoint} In the case where one prescribes the endpoint $x(T)$  in Definition \ref{def:OCD}, the proof of Proposition~\ref{prop:pontriagin} shows that the claim still holds true without the endpoint condition for $p(T)$ in Point 3.
\end{remark}

\begin{corollary}[Feedback control and regularity]\label{coro:regularity}
Let $(x^*, \alpha^*)$ be optimal for $u(x,t)$ and $p$ be the  related costate as in Proposition~\ref{prop:pontriagin}. Then:
\begin{enumerate}
\item The costate $p=(p_1, p_2)$ is uniquely expressed in terms of $x^*$ for every $s\in [t, T]$ by
\begin{equation}
\!\begin{cases}\label{tag:p}
p_1(s)\!\!&\!\!\!=-g_{x_1}(x^*(T))-\!\!\displaystyle\int_s^T \!\!f_{x_1}(x^*,\tau)-p_2^2h'(x_1^*)h(x_1^*)\,d\tau,\\
p_2(s)\!\!&\!\!\!=-g_{x_2}(x^*(T))-\displaystyle\int_s^Tf_{x_2}(x^*,\tau)\,d\tau.\\
\end{cases}
\end{equation}
\item The optimal control
 $\alpha^*=(\alpha^*_1, \alpha^*_2)$ is a feedback control {\rm (}i.e., a function of $x^*${\rm )}, uniquely expressed   in terms of $x^*$ for a. e. $s\in [t, T]$ by
\begin{equation}\!\begin{cases}\label{tag:alpha}
\alpha_1^*(s)\!\!&\!\!\!=-g_{x_1}(x^*(T))+\!\!\displaystyle\int_T^s \!\!f_{x_1}(x^*,\tau)-p_2^2h'(x_1^*)h(x_1^*)\,d\tau,
\\
\alpha_2^*(s)\!\!&\!\!\!=p_2(s) h(x_1^*(s)).
\end{cases}
\end{equation}
\item  The optimal trajectory $x^*$ and the optimal control $\alpha^*$ are of class $C^1$. 
In particular the equalities \eqref{tag:alpha*} -- \eqref{tag:alpha} do hold for every $s\in [t, T]$.
\item Assume that,  for some $k\in\mathbb N$,
 $h\in C^{k+1}$ and  $D_xf(x,s)$ is of class $C^k$.
Then $x^*, p$ and $\alpha^*$  are of class   $C^{k+1}$.
\end{enumerate}
\end{corollary}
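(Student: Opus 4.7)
The plan is to derive the explicit formulas in Points~1 and~2 by direct integration of the necessary conditions of Proposition~\ref{prop:pontriagin}, and then to upgrade the regularity by a standard ODE bootstrap on the resulting integral representations. For Point~1, I would first integrate \eqref{tag:adjoint2} backward on $[s,T]$ using the transversality condition $p_2(T)=-g_{x_2}(x^*(T))$, which immediately yields the second line of \eqref{tag:p}. For $p_1$, I would eliminate $\alpha_2^*$ from \eqref{tag:adjoint1} by means of \eqref{tag:alpha*}, obtaining $p_1'=-p_2^2 h'(x_1^*)h(x_1^*)+f_{x_1}(x^*,s)$, and then integrate backward using $p_1(T)=-g_{x_1}(x^*(T))$. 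Point~2 is then a direct substitution of these expressions into \eqref{tag:alpha*}; the reversed integration bounds in \eqref{tag:alpha} amount merely to the identity $\int_T^s=-\int_s^T$.

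For Point~3 I would perform a short bootstrap. The optimal trajectory $x^*$ is \emph{a priori} absolutely continuous, hence continuous; since $f_{x_2}$ and $h$ are continuous by Hypothesis~\ref{BasicAss}, the integrand in the second formula of \eqref{tag:p} is continuous in $\tau$, so the fundamental theorem of calculus gives $p_2\in C^1([t,T])$. With $p_2$ continuous, the integrand of the first formula in \eqref{tag:p} is also continuous, which yields $p_1\in C^1([t,T])$. Equation \eqref{tag:alpha*} then furnishes $\alpha^*\in C^1$, and \eqref{tag:1}--\eqref{tag:2} yield $x^*\in C^1$. The a.e.\ identities \eqref{tag:alpha*}--\eqref{tag:alpha} provided by the Maximum Principle therefore hold for every $s\in[t,T]$ by continuity of both sides.

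For Point~4 I would iterate the same argument. Assume inductively that $x^*,p,\alpha^*\in C^j$ for some $1\le j\le k$; since $h\in C^{k+1}$ and $D_xf\in C^k$ (so in particular $C^j$), the maps $\tau\mapsto f_{x_i}(x^*(\tau),\tau)$ and $\tau\mapsto p_2^2(\tau)h'(x_1^*(\tau))h(x_1^*(\tau))$ belong to $C^j$. Hence \eqref{tag:p} upgrades $p$ to $C^{j+1}$, \eqref{tag:alpha*} combined with $h\in C^{k+1}$ upgrades $\alpha^*$ to $C^{j+1}$, and the dynamics \eqref{tag:1}--\eqref{tag:2} upgrade $x^*$ to $C^{j+1}$. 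Iterating $k$ times gives the full $C^{k+1}$ regularity.

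I do not foresee any substantial obstacle: once the Maximum Principle supplies \eqref{tag:alpha*}, Points~1 and~2 are algebraic manipulations and the regularity statements reduce to a textbook ODE bootstrap on the integral representation. The only delicate points are the sign bookkeeping during the backward integration, and the observation that the hypothesis $h\in C^{k+1}$ (rather than $C^k$) is exactly what one needs so that the factor $h'(x_1^*)$ appearing in the integrand of $p_1$ does not cost one derivative at each inductive step.
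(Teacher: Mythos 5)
Your proposal is correct and follows essentially the same route as the paper's (very terse) proof: integrate the adjoint system backward from the transversality condition, substitute into the maximum condition to get the feedback form, and bootstrap regularity through the integral representations, with the same observation that $h\in C^{k+1}$ is what prevents the factor $h'(x_1^*)$ from costing a derivative at each step. The only cosmetic remark is that in Points 3 and 4 one should upgrade $x^*$ via the dynamics \emph{before} upgrading $\alpha_2^*=p_2\,h(x_1^*)$, since the latter requires the regularity of $x_1^*$; this is a trivial reordering and not a gap.
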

\begin{proof} Point 1 is an immediate consequence of \eqref{tag:3} -- \eqref{tag:4} together with the endpoint condition $p(T)=-D g(x^*(T))$. Point 2 follows then directly from \eqref{tag:alpha*}.

3. Since $x^*$ is continuous, the continuity of  $\alpha^*$ follows from \eqref{tag:alpha}. The dynamics~\eqref{eq:HJ2} then imply that $x^*\in C^1$.
Relations~\eqref{tag:p} and~\eqref{tag:alpha}  imply, respectively,  that  $p$ and $\alpha^*$  are of class $C^1$.

4.
The relations~\eqref{tag:p} and the $C^1$-regularity of $x^*$ and $p$  imply that, actually,  $p\in C^2$. Therefore, \eqref{tag:alpha} gives the $C^2$-regularity of $\alpha^*$ and, finally, the dynamics \eqref{eq:HJ2} yield the $C^2$-regularity of $x^*$.
Further regularity of $x^*$, $\alpha^*$ and $p$ follows by a standard bootstrap inductive argument.
\end{proof}

\subsection{Uniqueness of the trajectories after the initial time}
Next Theorem~\ref{th:nobifurc} implies that the optimal trajectories for $u(x,t)$, do not bifurcate at any time $r>t$ whenever $h( x_1)\not=0$ (see Corollary \ref{cor:nobifurc}), otherwise they may rest at $ x$ in an interval from the initial time $t$ but they do not bifurcate as soon as they leave $ x$.
\begin{theorem}[Uniqueness of the optimal trajectory after the rest time]\label{th:nobifurc}
Under Hypothesis \ref{BasicAss}, let $x^*$ be an  optimal trajectory for $u(x,t)$.
\begin{enumerate}
\item Assume that $h(x_1^*(\tau))\not=0$ for some $t<\tau<T$. For every $\tau\le  r< T$ there are no other  optimal trajectories for $u(x^*(r),r)$, other than $x^*$, restricted to $[r,T]$.
\item\label{ii-th:nobifurc} Assume that $h( x_1)=0$. Let $t_{x^*}$ be the rest time for $x^*$ defined by
\[t_{x^*}:=\sup\{r\in[t, T]:\, x^*\equiv x\text{ on }[t, r]\}.\]
For every $r>t_{x^*}$ there are no  optimal trajectories for $u(x^*(r),r)$,  other  than $x^*$ restricted to $[r,T]$.
\end{enumerate}
\end{theorem}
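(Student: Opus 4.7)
The plan is to proceed by contradiction, treating both parts (1) and (2) in a unified way. Suppose there is a second optimal trajectory $\tilde x$ for $u(x^*(r),r)$ on $[r,T]$ with $\tilde x\not\equiv x^*$ on $[r,T]$, and let $\tilde\alpha$, $\tilde p$ denote its optimal control and the associated costate from Proposition~\ref{prop:pontriagin}. By the concatenation lemma and the Dynamic Programming Principle from the Appendix, the concatenated pair
\[
y(s):=\begin{cases} x^*(s) & s\in[t,r],\\ \tilde x(s) & s\in[r,T],\end{cases}\qquad \beta(s):=\begin{cases} \alpha^*(s) & s\in[t,r],\\ \tilde\alpha(s) & s\in[r,T],\end{cases}
\]
is optimal for $u(x,t)$. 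Applying Proposition~\ref{prop:pontriagin} to $(y,\beta)$ produces a single costate $q\in AC([t,T];\R^2)$ satisfying the Hamiltonian system~\eqref{tag:1}--\eqref{tag:4} on the whole of $[t,T]$, with $q(T)=-Dg(\tilde x(T))$. On $[r,T]$ both $(y,q)$ and $(\tilde x,\tilde p)$ solve this system with the same terminal data, so Cauchy-Lipschitz (the right-hand sides are locally Lipschitz in $(x,p)$ by Hypothesis~\ref{BasicAss}) gives $q\equiv\tilde p$ on $[r,T]$.

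Next I would compare $(x^*,p)$ with $(y,q)$ on $[t,r]$, where the trajectories coincide. The feedback identities~\eqref{tag:alpha*} applied to both optimal pairs, together with $\beta=\alpha^*$ on $[t,r]$, force
\[
p_1=q_1\quad\textrm{on }[t,r],\qquad h(x_1^*(s))\bigl(q_2(s)-p_2(s)\bigr)=0\quad\textrm{a.e.\ on }[t,r].
\]
Moreover both $p_2$ and $q_2$ satisfy the same scalar ODE $p_2'=f_{x_2}(x^*(s),s)$ on $[t,r]$, so the difference $c:=q_2-p_2$ is constant there; the second relation above then reads $c\cdot h(x_1^*(s))=0$ for every $s\in[t,r]$.

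To conclude, I would dichotomize on the vanishing of $h\circ x_1^*$. In part~(1), the hypothesis furnishes some $\tau\in[t,r]$ with $h(x_1^*(\tau))\neq 0$, which forces $c=0$. In part~(2), if $c\neq 0$ we would have $h(x_1^*(s))=0$ on all of $[t,r]$; since $x_1^*$ is continuous and $\mathcal Z=\{h=0\}$ is totally disconnected (Hypothesis~\ref{BasicAss}), the connected image $x_1^*([t,r])\subset\mathcal Z$ reduces to the single point $\{x_1\}$, so $x_1^*\equiv x_1$ on $[t,r]$, and then the dynamics~\eqref{eq:HJ2} yield $x_2^*{}'=h^2(x_1)p_2=0$, whence $x^*\equiv x$ on $[t,r]$, contradicting $r>t_{x^*}$. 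In both cases $c=0$, so $q(r)=p(r)$. Applying Cauchy-Lipschitz to the Hamiltonian system forward from the datum $(x^*(r),p(r))$ at time $r$ then yields $(y,q)\equiv(x^*,p)$ on $[r,T]$, hence $\tilde x\equiv x^*$ on $[r,T]$, contradicting the initial assumption.

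The main obstacle is producing a \emph{single} absolutely continuous costate $q$ for the concatenation on all of $[t,T]$: one cannot simply paste $p|_{[t,r]}$ and $\tilde p|_{[r,T]}$, as this would beg the very question. The legitimate route is to first promote $(y,\beta)$ to a global optimum of $u(x,t)$ via the concatenation/DPP results of the Appendix, and only then invoke Proposition~\ref{prop:pontriagin}. A secondary delicate point is the use of total disconnectedness of $\mathcal Z$ in part~(2); this is the sole place where that structural hypothesis on $h$ is decisive in ruling out the pathological alternative $c\neq 0$.
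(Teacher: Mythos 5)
Your proof is correct and follows the same overall strategy as the paper: promote the second trajectory to a global optimum of $u(x,t)$ by concatenation (Proposition~\ref{proposition:carloclaudio}), apply the Pontryagin system of Proposition~\ref{prop:pontriagin} to both optimal pairs, identify the two costates at a common time using the non-vanishing of $h(x_1^*(\tau))$, and conclude by uniqueness for the Cauchy problem \eqref{tag:1}--\eqref{tag:4}. The only place where you genuinely diverge is the identification step: the paper uses the $C^1$ regularity of optimal trajectories (Corollary~\ref{coro:regularity}) to read off $p_1(\tau)=(x_1^*)'(\tau)=q_1(\tau)$ and $p_2(\tau)=(x_2^*)'(\tau)/h^2(x_1^*(\tau))=q_2(\tau)$ directly at the time $\tau$ where $h\neq 0$, and then applies ODE uniqueness from $\tau$; you instead work on the whole interval $[t,r]$, extracting $p_1=q_1$ and $h(x_1^*)(q_2-p_2)=0$ from the a.e.\ maximum condition \eqref{tag:alpha*}, using the adjoint equation \eqref{tag:4} to show $q_2-p_2$ is a constant $c$, and killing $c$ via the point $\tau$. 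Your version buys a little robustness (it needs only the a.e.\ feedback relations and continuity of the costates, not the pointwise $C^1$ regularity of the trajectories) and it absorbs Lemma~\ref{lemma:singular} into the dichotomy on $c$, whereas the paper quotes that lemma separately to produce $\tau$ in part~(2); the paper's version is more economical once Corollary~\ref{coro:regularity} is in place. Your closing remark about not being allowed to paste $p|_{[t,r]}$ with $\tilde p|_{[r,T]}$ is exactly the right caution, and the paper's proof is structured to avoid that trap in the same way.
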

\begin{remark} We point out that the rest time may be positive only when $h(x_1)$ vanishes. Notice also that $t_{x^*}=T$ if and only if $x^*$ is constant on $[t, T]$.
\end{remark}
The next Lemma~\ref{lemma:singular} relates the initial constancy of an optimal trajectory to a stationary condition and is a key argument of the proof of part \eqref{ii-th:nobifurc} of Theorem~\ref{th:nobifurc}.
\begin{lemma}[A stationary condition]\label{lemma:singular} Assume that $h( x_1)=0$. Let $x^*=(x^*_1, x^*_2)$ be an optimal trajectory for  $u(x,t)$, and $r\in [t, T]$. Then
\begin{equation}\label{tag:singular}x^*\equiv x\text{ on } [t, r]\Leftrightarrow h(x_1^*)\equiv 0\text{ on } [t,r].\end{equation}
\end{lemma}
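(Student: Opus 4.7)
The plan splits naturally along the two implications of the equivalence. The direct implication ($\Rightarrow$) is essentially trivial: if $x^* \equiv x$ on $[t, r]$, then in particular $x_1^*(s) = x_1$ for all $s \in [t, r]$, so $h(x_1^*(s)) = h(x_1) = 0$ on $[t, r]$ by the standing assumption $h(x_1) = 0$.

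For the converse, assume $h(x_1^*(s)) = 0$ for every $s \in [t, r]$. The second component is immediate: by the optimal dynamics \eqref{tag:2} in Proposition~\ref{prop:pontriagin}, $x_2^{*\prime}(s) = h^2(x_1^*(s))\, p_2(s) = 0$ on $[t, r]$, hence $x_2^*(s) \equiv x_2$ throughout the interval. For the first component, the key is to invoke the topological assumption in Hypothesis~\ref{BasicAss} that the zero set $\mathcal{Z} = \{z \in \R : h(z) = 0\}$ is totally disconnected. Indeed, by the $C^1$-regularity of $x^*$ established in Corollary~\ref{coro:regularity}, the image $x_1^*([t, r])$ is a connected subset of $\R$ (continuous image of the connected interval $[t, r]$), and it is contained in $\mathcal{Z}$ by assumption. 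Since connected subsets of a totally disconnected space reduce to singletons, $x_1^*$ must be constant on $[t, r]$, and thus equal to $x_1$. Combined with the first step, this gives $x^* \equiv x$ on $[t, r]$.

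The proof is short, and the main point to highlight is that no costate analysis or Pontryagin-based argument for $\alpha_1^*$ is needed here: constancy of $x_1^*$ is forced purely by the combination of the continuity of the optimal trajectory and the totally disconnected structure of $\mathcal{Z}$. I do not anticipate any real obstacle in the writeup.
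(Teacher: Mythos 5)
Your proof is correct and takes essentially the same route as the paper's: the forward implication is trivial, and for the converse the continuous image $x_1^*([t,r])$ is a connected subset of the totally disconnected zero set $\mathcal Z$, hence a singleton, after which the second component is constant. The only cosmetic difference is that you invoke the Pontryagin relation \eqref{tag:2} for $x_2^*$, whereas the paper gets the same conclusion directly from the raw dynamics \eqref{eq:HJ2} without any costate.
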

\begin{proof} If $h(x_1^*)=0$ on $[t,r]$ then $x_1^*$ belongs to set of the zeros  of $h$, which is totally disconnected by Hypothesis~\ref{BasicAss}. It follows that $x_1^*\equiv x_1$ on $[t,r]$. Moreover, the dynamics
\eqref{eq:HJ2} imply that $x_2^*\equiv  x_2$  on $[t, r]$, so that $x^*\equiv  x$ on $[t, r]$.
The opposite implication is trivial, since $h( x_1)=0$.
\end{proof}
\begin{remark} The two  conditions in  \eqref{tag:singular} are both equivalent to the fact that the (unique) control $\alpha^*$ such that $J_t(x^*, \alpha^*)=u( x, t)$  is singular on $[t,r]$ in the sense of \cite[Def. 2.3]{CR}, i.e.,  that there exists an absolutely continuous arc   $\pi:[t, r]\rightarrow \R^2\setminus\{(0,0)\}$ satisfying
\begin{equation}\label{tag:SO1}
\pi_1=0,\quad \pi_2h(x_1^*)=0,\quad \pi_1'=-\pi_2^2h'(x_1^*)h(x_1^*), \quad \pi_2'=0.
 \end{equation}
Indeed, if $x^*\equiv  x$ on $[t,r]$ for some $t<r\le T$ then any arc $\pi:=(0, c)$ with $0\not=c\in\R$ is such that $(x^*, \pi)$ satisfies \eqref{tag:SO1} on $[t, r]$.
Conversely, if $(x^*, \pi)$ fulfills \eqref{tag:SO1} on $[t, r]$ for some $t<r\le T$, then $\pi_2=c$ for some constant $c$. Since $\pi_1=0$ then $c\not=0$. It follows from the second equation in \eqref{tag:SO1}  that $h(x_1^*)=0$ on $[t,r]$.\\
Notice that \eqref{tag:SO1} are the conditions satisfied by the costate $p$ in \eqref{tag:alpha*} and \eqref{tag:3} -- \eqref{tag:4} when $\alpha^*=(0,0)$ and $f\equiv 0$.
\end{remark}
\begin{proof}[Theorem~\ref{th:nobifurc}] 1.
Let $y^*$ be optimal  for $u(x^*(r),r)$.
Point 1 of Proposition~\ref{proposition:carloclaudio} in the Appendix ensures that the concatenation $z^*$ of $x^*$ with $y^*$ at $r$ is optimal for $u(x,t)$. Let $p:=(p_1, p_2), q:=(q_1, q_2)$ be the costates associated to  $x^*:=(x^*_1, x^*_2)$ and, respectively, to $z^*:=(z^*_1, z^*_2)$.
Both $(x^*, p)$ and $(z^*, q)$ satisfy \eqref{tag:1} -- \eqref{tag:4} on $[t, T]$.
Now, Corollary~\ref{coro:regularity} shows that $x^*$ and $z^*$ are of class $C^1$.
Since $x^*=z^*$ on $[t, \tau]$, the fact that $\tau>t$, together with  \eqref{tag:1} imply
\[p_1(\tau)=(x_1^*)'(\tau)=\lim_{s\to\tau^-}(x_1^*)'(s)=\lim_{s\to\tau^-}(z_1^*)'(s)=(z_1^*)'(\tau)=q_1(\tau),\]
whereas \eqref{tag:2}, and the fact that $h(x_1^*(\tau))\not=0$  analogously  yield
\[p_2(\tau)=\dfrac{(x_2^*)'(\tau)}{h^2(x_1^*(\tau))}=\dfrac{(z_2^*)'(\tau)}{h^2(z_1^*(\tau))}=q_2(\tau).\]
Therefore, both $(x^*, p)$ and $(z^*, q)$ are absolutely continuous solutions to  the same Cauchy problem on $[t, T]$, with Cauchy data at $\tau$, for the first order differential system \eqref{tag:1}-\eqref{tag:4}. The regularity assumptions on $f$ and $h$ and Caratheodory's Theorem guarantee the uniqueness of the solution. Thus $x^*=z^*$ on $[t,T]$, from which we obtained the desired equality $x^*=y^*$ on $[r, T]$.

2. We assume that $t_{x^*}<T$, otherwise the claim is trivial.
We deduce from Lemma~\ref{lemma:singular} that there is $\tau\in [t_{x^*},r]$ satisfying $h(x_1^*(\tau))\not=0$.
If $y^*$ is optimal for $u(x^*(r),r)$, then  Point 1 of  Proposition~\ref{proposition:carloclaudio} shows that the concatenation $z^*$ of $x^*$ with $y^*$ at $r$ is optimal for $u(x,t)$.
Moreover, Point 2 of Proposition~\ref{proposition:carloclaudio} imply that both $x^*$ and $z^*$, restricted to $[\tau, T]$, are  optimal for $u(x^*(\tau), \tau)$. Point 1 of Theorem~\ref{th:nobifurc}  implies that $x^*=y^*$ on $[\tau, T]$, proving the desired result.
\end{proof}
\begin{corollary}\label{cor:nobifurc}
Let $x^*$ be an  optimal trajectory for $u(x,t)$. If $h(x_1)\not=0$, for every $0<  r< T$ there are no other  optimal trajectories for $u(x^*(r),r)$, other than $x^*$, restricted to $[r,T]$.
\end{corollary}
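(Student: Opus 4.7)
The plan is to derive the corollary as a direct continuity-based reduction to Part~1 of Theorem~\ref{th:nobifurc}. The hypothesis $h(x_1)\neq 0$ is a condition at the initial time $t$ rather than at some interior time $\tau$, so the task is simply to propagate this nonvanishing into a right neighborhood of $t$ and then invoke the theorem.

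First, I would recall from Corollary~\ref{coro:regularity} that any optimal trajectory $x^*$ is of class $C^1$ on $[t,T]$; in particular $x_1^*$ is continuous. Since $h$ is continuous (Hypothesis~\ref{BasicAss}) and $h(x_1^*(t))=h(x_1)\neq 0$, the composition $s\mapsto h(x_1^*(s))$ is continuous and nonzero at $s=t$. Hence there exists $\delta>0$ such that
\begin{equation*}
h(x_1^*(s))\neq 0\qquad \text{for every } s\in[t, t+\delta]\cap[t,T].
\end{equation*}

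Next, fix an arbitrary $r\in(t,T)$. I would split into two cases depending on whether $r$ lies in the neighborhood where $h\circ x_1^*$ is nonzero. If $r\le t+\delta$, take $\tau:=r$: then $t<\tau<T$ and $h(x_1^*(\tau))\neq 0$, so Part~1 of Theorem~\ref{th:nobifurc} applied with this $\tau$ (and with $r=\tau$) yields that no optimal trajectory for $u(x^*(r),r)$ other than $x^*\restriction_{[r,T]}$ exists. If instead $r>t+\delta$, pick any $\tau\in(t,t+\delta)$: then $t<\tau\le r<T$ and $h(x_1^*(\tau))\neq 0$, so Part~1 of Theorem~\ref{th:nobifurc} applies again and gives the same conclusion.

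There is no real obstacle here: the corollary is essentially a cosmetic strengthening of Part~1 of the theorem, the only substantive ingredient being that the nonvanishing hypothesis, imposed at the initial point, persists on a right neighborhood of $t$ thanks to the $C^1$-regularity of $x_1^*$ and the continuity of $h$. The delicate work was already done in the proof of the theorem, where the key step was the matching of the costates $p(\tau)=q(\tau)$ using $h(x_1^*(\tau))\neq 0$ in equation~\eqref{tag:2}; here we only need to ensure such a $\tau$ exists in $(t,r]$.
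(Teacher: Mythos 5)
Your argument is correct and is precisely the intended derivation: the paper states Corollary~\ref{cor:nobifurc} without proof as an immediate consequence of Part~1 of Theorem~\ref{th:nobifurc}, and the only detail to supply is exactly the one you give, namely that continuity of $s\mapsto h(x_1^*(s))$ propagates $h(x_1)\neq 0$ to some $\tau\in(t,r]$, after which the theorem applies. The case split on $r\lessgtr t+\delta$ is handled correctly, so nothing is missing.
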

\section{The continuity equation}\label{sect:c_eq}
In this section we want to study equation \eqref{eq:MFG1}-(ii).
Since $h$ is independent of $x_2$, taking account of \eqref{H},
this partial differential equation can be rewritten as
\begin{equation}\label{cont}
\partial_t m-\partial_{x_1} (m \partial_{x_1}u)-h^2(x_1)\partial_{x_2} (m \partial_{x_2}u)=\partial_t m-\diver _G (m D_G u)=0.
\end{equation}
Hence our aim is to study the well posedness of the problem
\begin{equation}\label{continuity}
\left\{
\begin{array}{ll} \partial_t m-
\diver_G (m\, D_G u)=0,
&\qquad \textrm{in }\re^2\times (0,T),\\
m(x,0)=m_0(x), &\qquad \textrm{on }\re^2,
\end{array}\right.
\end{equation}
where $u$ is a solution to problem
\begin{equation}\label{HJ}
\left\{\begin{array}{ll}
-\partial_t u+\frac12 |D_Gu|^2=F(x, \overline {m})&\qquad \textrm{in }\re^2\times (0,T),\\
u(x,T)=G(x, \overline {m}(T)),&\qquad \textrm{on }\re^2,
\end{array}\right.
\end{equation}
where the function~$\overline m$ is fixed and fulfills
\begin{equation}\label{mcnd}
\overline {m}\in C^{1/2}([0,T],\mathcal P_1),\qquad \int_{\re^2}|x|^2\,d\overline m(t)(x)\leq K, \qquad t\in[0,T].
\end{equation}
Note that this problem is equivalent to \eqref{eq:HJ1} with a fixed $\overline m$.

Observe that, by Lemma \ref{L1}-(1), in~\eqref{continuity} the {\it drift}~$v=D_Gu$ is only bounded; this lack of regularity prevents to apply the standard results (uniqueness, existence and representation formula of $m$ as the push-forward of $m_0$ through the characteristic flow; e.g., see \cite[Proposition 8.1.8]{AGS}) for drifts which are Lipschitz continuous in~$x$. We shall overcome this difficulty applying Ambrosio superposition principle \cite[Theorem 8.2.1]{AGS} and proving several results on the uniqueness of the optimal trajectory for the control problem stated in Section \ref{OC}. The Ambrosio superposition principle yields a representation formula of $m$ as the push-forward of some measure on~$C^0([0,T],\re^2)$ through the evaluation map~$e_t$. In the following theorem, we shall also recover uniqueness, existence and some regularity result for the solution to~\eqref{continuity}.

\begin{theorem}\label{prp:m}
Under assumptions {\rm (H1) -- (H4)}, for any $\overline m$ as in \eqref{mcnd},
Problem \eqref{continuity} has a unique bounded solution $m$ in the sense of Definition \ref{defsolmfg}. Moreover $m(t,\cdot)$ is absolutely continuous with ~$\sup_{t\in[0,T]}\|m(t,\cdot)\|_{\infty}\leq C$ and it is a Lipschitz continuous map from $[0,T]$ to $\mathcal P_1$ with a Lipschitz constant bounded by $\|Du\|_\infty \|h^2\|_\infty$.
Moreover, the function $m$ satisfies:
\begin{equation}
\label{ambrosio}
\int_{\re^2} \phi\, dm(t)=\int_{\re^2}\phi(\overline {\gamma}_x(t))\,m_0(x)\, dx \qquad \forall \phi\in C^0_0(\R^2), \, \forall t\in[0,T]
\end{equation}
where, for a.e. $x\in\re^2$,  $\overline{\gamma}_x$ is the solution to \eqref{chflow}.
\end{theorem}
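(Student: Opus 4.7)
The plan is to realize $m$ as a push-forward along the flow of \eqref{chflow}, verify the regularity properties directly, and then obtain uniqueness from Ambrosio's superposition principle \cite[Thm.~8.2.1]{AGS} combined with the a.e.\ uniqueness of optimal trajectories given by Theorem~\ref{th:nobifurc}. Since $u$ is only Lipschitz and semiconcave in $x$, the vector field $b:=(-u_{x_1},-h^2(x_1)u_{x_2})$ appearing in \eqref{continuity} (rewritten as $\partial_t m+\diver(mb)=0$) is bounded but not Lipschitz, so the classical DiPerna--Lions/Ambrosio theory does not apply; the extra input replacing Lipschitzianity of the drift is the control-theoretic interpretation of $u$.

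First I would identify, on a set of full $m_0$-measure, the characteristics of \eqref{chflow} with the unique optimal trajectories for $u(x,0)$. Semiconcavity of $u$ gives differentiability a.e.; at such points, the $G$-differentiability results of the Appendix together with Pontryagin (Proposition~\ref{prop:pontriagin}) identify $\alpha^*(s)$ with $-D_Gu(x^*(s),s)$ in feedback form, so that the optimal dynamics reduce exactly to \eqref{chflow}. Theorem~\ref{th:nobifurc} (applied after the rest time when necessary) then provides a single optimal trajectory $\overline\gamma_x$ for $m_0$-a.e.\ $x$, and this $\overline\gamma_x$ solves \eqref{chflow}. Setting $m(t):=(\overline\gamma_{\cdot}(t))_\#m_0$ gives \eqref{ambrosio}, and
\begin{equation*}
\tfrac{d}{dt}\int\phi\,dm(t)=\int D\phi(\overline\gamma_x(t))\cdot b(\overline\gamma_x(t),t)\,m_0(x)\,dx\qquad(\phi\in C^\infty_c(\re^2))
\end{equation*}
shows that $m$ solves \eqref{continuity} in the distributional sense.

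The regularity statements follow by soft arguments: semiconcavity of $u$ gives a distributional upper bound $\diver b\le K$, from which a standard duality/Gr\"onwall argument on test functions yields $\sup_t\|m(t,\cdot)\|_\infty\le e^{KT}\|m_0\|_\infty$; Lipschitz continuity in $\mathcal P_1$ follows from the Kantorovich--Rubinstein duality and
\begin{equation*}
{\bf d}_1(m(t),m(s))\le \int|\overline\gamma_x(t)-\overline\gamma_x(s)|\,m_0(x)\,dx\le \|b\|_\infty\,|t-s|.
\end{equation*}
For uniqueness, given any other bounded solution $\widetilde m$, the superposition principle yields a measure $\eta$ on $C^0([0,T];\re^2)$, concentrated on integral curves of $\dot\gamma=b(\gamma,t)$, with $(e_0)_\#\eta=m_0$ and $(e_t)_\#\eta=\widetilde m(t)$. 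Since the ODE admits the unique solution $\overline\gamma_x$ from $m_0$-a.e.\ $x$, disintegrating $\eta$ with respect to $e_0$ forces $\eta=(x\mapsto\overline\gamma_x)_\#m_0$, hence $\widetilde m(t)=m(t)$.

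The main obstacle is precisely the identification of \eqref{chflow}-characteristics with optimal trajectories on a set of full $m_0$-measure. This is where the non-coercivity of $H$ bites: Theorem~\ref{th:nobifurc} controls bifurcation only after a rest time that may be strictly positive, and the natural object along an optimal trajectory is the horizontal gradient $D_Gu$ rather than $Du$. Showing that the exceptional set of starting points, where either $u(\cdot,0)$ fails to be $G$-differentiable or the optimal trajectory is not unique, is $m_0$-negligible is the technical heart of the argument, and it rests on the semiconcavity of $u$ and on the fine analysis of optimal trajectories carried out in Section~\ref{OC}.
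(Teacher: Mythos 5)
Your treatment of uniqueness, of the representation formula \eqref{ambrosio} and of the Lipschitz continuity in $\mathcal P_1$ follows essentially the paper's route: Ambrosio's superposition principle plus disintegration reduces everything to the $m_0$-a.e.\ uniqueness of solutions of \eqref{chflow}, which the paper obtains from Lemma~\ref{B}, Lemma~\ref{4.9} and Corollary~\ref{cor:nobifurc} on the complement of the Lebesgue-null set where $h(x_1)=0$ or $u(\cdot,0)$ fails to be differentiable (this is where the absolute continuity of $m_0$ and the total disconnectedness of $\mathcal Z$ enter). One caveat: Theorem~\ref{th:nobifurc}(2) gives uniqueness of the trajectory only for $u(x^*(r),r)$ with $r$ strictly after the rest time, not for the starting point itself when $h(x_1)=0$; the paper does not ``apply the theorem after the rest time'' at such points but simply discards that null set, working only where $h(x_1)\neq 0$.

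The genuine gap is in the existence part and, above all, in the bound $\sup_t\|m(t,\cdot)\|_\infty\le C$. You build $m$ directly as the push-forward of $m_0$ along the a.e.-defined flow and assert that semiconcavity gives a one-sided distributional bound on $\diver b$ from which ``a standard duality/Gr\"onwall argument'' yields the $L^\infty$ estimate. (A minor point first: the inequality you need is $\diver b\ge -K$, i.e.\ an upper bound on $\partial_{11}u+h^2\partial_{22}u$, not $\diver b\le K$; semiconcavity does give the correct sign.) But with $b$ merely bounded and measurable this step is not standard: the duality route requires solving the backward transport equation along $b$, which is precisely the ill-posed problem one is trying to avoid, and the pointwise identity $\frac{d}{dt}\bigl(m\circ\Phi_t\bigr)=-(\diver b)\,(m\circ\Phi_t)$ presupposes a differentiable (or at least renormalizable) flow, which the a.e.-defined selection of optimal trajectories is not known to be. The same difficulty affects your verification that the push-forward is a distributional solution of \eqref{continuity}, since differentiating under the integral requires control of $D_Gu$ along the flow for every $t$ and measurability of $x\mapsto\overline\gamma_x$. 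The paper circumvents all of this by running the construction at the viscous level \eqref{eq:MFGv}: the semiconcavity bound of Lemma~\ref{visco:lemma5.2} is applied to the smooth $u^\sigma$, the parabolic maximum principle yields $\|m^\sigma\|_\infty\le Ce^{CT}$ together with the time-H\"older and second-moment bounds (Lemma~\ref{visco:lemma4}), and these estimates pass to the limit $\sigma\to0^+$ in Proposition~\ref{VV}. To close your argument you would either have to reinstate this vanishing-viscosity step or prove a quantitative non-compression estimate for the optimal flow of a semiconcave value function; neither is a soft consequence of what you have written.
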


The proof of  Theorem~\ref{prp:m} is given in the next two subsections which are devoted respectively to the existence result (see Proposition~\ref{VV}), to the uniqueness result and to the representation formula (see Proposition~\ref{!FP}) and to the Lipschitz regularity (see Corollary~\ref{lemma:m_lip}).
\subsection{Existence of the solution}\label{subsect:ex}

As in \cite[Appendix]{C13} (see also \cite[Section 4.4]{C}), we now want to establish the existence of a solution to the continuity equation 
via a vanishing viscosity method, applied on the {\it whole} MFG system.
\begin{proposition}\label{VV}
Under assumptions {\rm (H1) -- (H4)},
problem \eqref{continuity} has a bounded solution $m$ in the sense of Definition \ref{defsolmfg}.
\end{proposition}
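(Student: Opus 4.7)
The plan is to apply a vanishing viscosity regularization to the coupled system \eqref{continuity}--\eqref{HJ} in the spirit of \cite[Appendix]{C13} and \cite[Section 4.4]{C}. Although $\overline m$ is fixed in our setting, it is convenient to regularize both equations simultaneously in order to preserve the duality structure that links them.

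For each $\varepsilon>0$, I would first consider the backward viscous Hamilton--Jacobi equation
\[-\partial_t u^\varepsilon-\varepsilon\Delta u^\varepsilon+\tfrac12|D_G u^\varepsilon|^2=F(x,\overline m(t)), \qquad u^\varepsilon(x,T)=G(x,\overline m(T)).\]
By (H1)--(H3), classical quasilinear parabolic theory yields a unique smooth bounded solution $u^\varepsilon$. A comparison with linear-in-time barriers gives $\|u^\varepsilon\|_\infty\le C_T$, while a Bernstein-type argument, together with the $C^2$ bounds on $F,G$ and $h$, produces $\|Du^\varepsilon\|_\infty\le C_T$ and a one-sided Hessian bound $D^2 u^\varepsilon\le C_T I$ (semiconcavity), all independent of $\varepsilon$. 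With $u^\varepsilon$ at hand, I would then solve the corresponding forward viscous Fokker--Planck equation
\[\partial_t m^\varepsilon-\varepsilon\Delta m^\varepsilon-\diver(m^\varepsilon\,\partial_p H(x,Du^\varepsilon))=0, \qquad m^\varepsilon(x,0)=m_0(x).\]
Since the drift $\partial_p H(x,Du^\varepsilon)=(u^\varepsilon_{x_1}, h^2(x_1)u^\varepsilon_{x_2})$ is smooth and bounded in space-time, this linear parabolic equation admits a unique non-negative classical solution that remains a probability density. Maximum principle together with duality arguments give a uniform $L^\infty$ bound on $m^\varepsilon$; the compact support of $m_0$ combined with the bounded drift furnish a uniform second-moment bound; and testing against smooth functions produces the equi-Lipschitz continuity of $t\mapsto m^\varepsilon(t)$ in $\mathcal P_1$, with constant controlled by $\|Du^\varepsilon\|_\infty\|h^2\|_\infty$.

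These uniform estimates allow one to extract a subsequence with $u^\varepsilon\to u$ locally uniformly on $\R^2\times[0,T]$ and $m^\varepsilon\to m$ in $C^0([0,T];\mathcal P_1)$, as well as weak-$*$ in $L^\infty$. The stability of viscosity solutions, together with the lower semicontinuity of the semiconcavity inequality, ensures that $u$ coincides with the value function of Lemma~\ref{valuefunction} and inherits the uniform bounds. It only remains to show that $m$ satisfies \eqref{continuity} in the sense of distributions.

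The hardest step is passing to the limit in the nonlinear product $m^\varepsilon\,\partial_p H(x,Du^\varepsilon)$, since the non-coercivity of $H$ precludes the standard $L^2$-gradient estimates that would yield strong $L^2$ convergence of $Du^\varepsilon$. The remedy relies on the uniform semiconcavity: it guarantees that $u$ is a.e.\ differentiable and, via a standard argument (cf.\ the proof of \cite[Prop. 4.14]{C}), that $Du^\varepsilon(x,t)\to Du(x,t)$ pointwise on the full-measure set of differentiability of $u$. Combined with the uniform $L^\infty$ bound on $m^\varepsilon$, the compact support of the test functions and Egoroff's theorem, this pointwise convergence suffices to pass to the limit in $\int_0^T\!\int_{\R^2} m^\varepsilon\,\partial_p H(x,Du^\varepsilon)\cdot D\phi\,dx\,dt$ for every $\phi\in C^\infty_c(\R^2\times[0,T))$, thereby producing \eqref{continuity} in the distributional sense.
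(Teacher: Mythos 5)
Your proposal follows essentially the same route as the paper: vanishing viscosity on the (effectively decoupled, since $\overline m$ is fixed) system, uniform $L^\infty$, Lipschitz and semiconcavity bounds on $u^\varepsilon$, uniform $L^\infty$ and moment bounds on $m^\varepsilon$, and passage to the limit in the drift term by combining the a.e.\ convergence $Du^\varepsilon\to Du$ (from semiconcavity) with the weak-$*$ convergence of $m^\varepsilon$. The only slip is that $t\mapsto m^\varepsilon(t)$ is uniformly H\"older-$1/2$ in $\mathcal P_1$ rather than equi-Lipschitz (the diffusion contributes a $\sqrt{\varepsilon|t_1-t_2|}$ displacement), but this still yields the needed compactness, so the argument stands.
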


We consider the solution $(u^\sigma, m^\sigma)$ to the following problem
\begin{equation}
\label{eq:MFGv}
\left\{
\begin{array}{lll}
&(i)\quad-\partial_t u-\sigma \Delta u+\frac12 |D_G u|^2
=F(x, \overline {m})&\qquad \textrm{in }\re^2\times (0,T)\\
&(ii)\quad \partial_t m-\sigma \Delta m-\diver _G (m D_G u)=0&\qquad \textrm{in }\re^2\times (0,T)\\
&(iii)\quad m(x,0)=m_0(x), u(x,T)=G(x, {\overline {m}}(T))&\qquad \textrm{on }\re^2.
\end{array}\right.
\end{equation}
Let us recall that equation \eqref{eq:MFGv}-(ii) has a standard interpretation in terms of a suitable stochastic process (see relation~\eqref{mstoch} below).
Our aim is to find a solution to problem~\eqref{continuity} letting $\sigma \to 0^+$. To this end some estimates are needed; as a first step, we establish the well-posedness of system~\eqref{eq:MFGv}.

\begin{lemma}\label{visco:buonapos}
Under assumptions {\rm (H1) -- (H4)}, for any $\overline m$ as in \eqref{mcnd}, there exists a unique bounded classical solution $(u^\sigma, m^\sigma)$ to problem~\eqref{eq:MFGv}. Moreover, $m^\sigma>0$.
\end{lemma}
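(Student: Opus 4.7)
The key observation is that the system~\eqref{eq:MFGv} is in fact \emph{decoupled}: since $\overline m$ is prescribed, equation~(i) does not involve $m$, so the plan is to first solve the backward semilinear parabolic equation~(i) for $u^\sigma$, and then plug $u^\sigma$ into the linear parabolic equation~(ii) for $m^\sigma$.

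For equation~(i), set $f(x,t):=F(x,\overline m(t))$ and $g(x):=G(x,\overline m(T))$; by (H1)--(H2) these are bounded in $C^2(\re^2)$ uniformly in $t\in[0,T]$. After reversing time ($\tau:=T-t$) equation~(i) becomes a forward semilinear parabolic equation with uniformly elliptic principal part $\sigma\Delta$, quadratic-in-$p$ Hamiltonian, and smooth bounded data. An $L^\infty$ bound on $u^\sigma$ follows from the parabolic comparison principle against the obvious affine-in-$\tau$ super- and subsolutions built out of $\|f\|_\infty$ and $\|g\|_\infty$; a gradient bound is obtained by a Bernstein-type computation, i.e.\ applying the maximum principle to a parabolic inequality satisfied by $w:=|Du^\sigma|^2$ (obtained by differentiating~(i) and using (H3) to handle the terms involving $h'$). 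Once $u^\sigma$ and $Du^\sigma$ are bounded, the Hamiltonian is effectively Lipschitz in $p$ on the relevant set, and classical Schauder theory for semilinear parabolic equations (for instance via a fixed-point argument on the corresponding linear problem, cf.\ Ladyzhenskaya--Solonnikov--Ural\'tseva) delivers a unique bounded classical solution $u^\sigma\in C^{2+\alpha,1+\alpha/2}(\re^2\times[0,T])$; uniqueness also follows directly from the parabolic comparison principle for~(i).

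Given such $u^\sigma$, equation~(ii) rewrites as
\[\partial_t m-\sigma\Delta m-\partial_{x_1}(m\,\partial_{x_1}u^\sigma)-h^2(x_1)\partial_{x_2}(m\,\partial_{x_2}u^\sigma)=0,\]
a \emph{linear} uniformly parabolic Cauchy problem whose drift and zero-order coefficients are bounded and H\"older continuous, thanks to Step~1 combined with (H3). Combining the $C^{2,\delta}$-regularity and compact support of $m_0$ with classical Schauder theory for the linear Cauchy problem on $\re^2$ yields existence and uniqueness of a bounded classical solution $m^\sigma$; equivalently, $m^\sigma(t,\cdot)$ is the law at time~$t$ of the (unique strong) solution of the SDE associated to the adjoint generator, which is the stochastic representation referred to in~\eqref{mstoch}. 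Positivity $m^\sigma>0$ then follows from the strong maximum principle applied to the linear parabolic equation for $m^\sigma$ (recalling $m_0\ge 0$, $m_0\not\equiv 0$), or equivalently from the strict positivity of the transition density of the underlying non-degenerate diffusion.

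The only genuine obstacle is the quadratic growth in $Du$ in equation~(i), which must be tamed by producing a priori $L^\infty$ bounds on $u^\sigma$ and on $Du^\sigma$ \emph{before} invoking Schauder theory; the Bernstein technique, which exploits the smoothness of $H$ in $(x,p)$, the boundedness of $h, h'$ via (H3) and the regularity of $f,g$, is tailor-made for this task. By contrast, the Grushin-type weight $h^2$ appearing in the drift of~(ii) is harmless here, because at the viscous level the diffusion $\sigma\Delta$ is fully (uniformly) elliptic; the degeneracy will only come back into play when one lets $\sigma\to 0^+$ in the next step of the paper.
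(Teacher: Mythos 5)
Your proposal is correct and follows essentially the same route as the paper: decouple the system (since $\overline m$ is fixed), obtain an a priori $L^\infty$ bound on $u^\sigma$ by comparison and solve the quadratic-gradient semilinear equation~(i) by classical quasilinear parabolic theory (the paper cites Ladyzhenskaya--Solonnikov--Ural'tseva, Theorem 8.1, whose internal gradient estimates play the role of your Bernstein step), then treat~(ii) as a linear uniformly parabolic equation with H\"older coefficients and conclude positivity via the (strong) maximum principle. The only cosmetic difference is that you spell out the gradient bound and the stochastic representation explicitly, whereas the paper delegates these to the cited references.
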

\begin{proof}
The proof uses standard regularity results for quasilinear parabolic equations;
from Lemma \ref{visco:lemma5.2} here below, the solution $u^\sigma$ of \eqref{eq:MFGv}-(i) is bounded in $\R^2\times[0,T]$.
Hence we can apply \cite[Theorem 8.1, p.495]{LSU}, obtaining the existence and uniqueness of a classical solution $u^\sigma$ in all $\R^2\times[0,T]$.
Now $m^\sigma$ is the classical solution of the linear equation
\[
\quad \partial_t m-\sigma \Delta m+b\cdot D m+c_0m=0,
\quad m(0)=m_0
\]
with $b$ and $c_0$ H\"older continuous coefficients.
Hence still applying classical results (see \cite[Theorem 5.1, p.320]{LSU}) we get the existence and uniqueness of a classical solution $m^\sigma$ of \eqref{eq:MFGv}-(ii).
From assumptions on $m_0$ and the maximum principle (see for example \cite[Theorem 2.1, p.13]{LSU}) we get that $m^\sigma>0$.
\end{proof}

Let us now prove that the functions $u^\sigma$ are Lipschitz continuous and semiconcave uniformly in~$\sigma$.

\begin{lemma}\label{visco:lemma5.2}
Under the same assumptions of Lemma~\ref{visco:buonapos}, there exists a constant $C>0$, independent of $\sigma$ such that
\[\|u^\sigma\|_\infty\leq C,\quad
\|D u^\sigma\|_\infty\leq C\quad\textrm{and}\quad D^2 u^\sigma\leq C\qquad \forall \sigma>0.
\]
\end{lemma}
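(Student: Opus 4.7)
The three bounds will all be obtained from the stochastic optimal control representation of $u^\sigma$. Since Lemma~\ref{visco:buonapos} provides a unique bounded classical solution to the viscous HJB equation \eqref{eq:MFGv}-(i), a standard verification argument yields
\[
u^\sigma(x,t)=\inf_\alpha \mathbb{E}\!\left[\int_t^T\!\!\left(\tfrac12|\alpha(s)|^2+F(X(s),\overline m(s))\right)ds+G(X(T),\overline m(T))\right],
\]
where the infimum is taken over progressively measurable controls and $X$ solves
\[
dX_1=\alpha_1\,ds+\sqrt{2\sigma}\,dW_1,\qquad dX_2=h(X_1)\alpha_2\,ds+\sqrt{2\sigma}\,dW_2,\qquad X(t)=x,
\]
with $(W_1,W_2)$ a standard two-dimensional Brownian motion. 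The point is that the $\sigma$-dependence enters only through an additive noise, which will cancel in the crucial differences arising below.

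For the $L^\infty$ bound I plug in the admissible control $\alpha\equiv 0$: the running cost is then bounded by $\|F\|_\infty$ and the terminal cost by $\|G\|_\infty$, giving $u^\sigma\le T\|F\|_\infty+\|G\|_\infty$ uniformly in $\sigma$; a matching lower bound follows from $|\alpha|^2\ge 0$ and (H1)--(H2). As a by-product one obtains the uniform a priori bound $\mathbb{E}\!\int_t^T|\alpha|^2\,ds\le C$ for any $\varepsilon$-optimal control.

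For the Lipschitz bound I fix an $\varepsilon$-optimal $\alpha$ for $u^\sigma(x,t)$ and propagate the dynamics from a different initial point $y$ using the same $\alpha$ and the same Brownian path. Because the noise is additive, pathwise one has $X_1^y-X_1^x\equiv y_1-x_1$ and
\[
X_2^y(s)-X_2^x(s)=y_2-x_2+\int_t^s\!\bigl[h(X_1^x(\tau)+y_1-x_1)-h(X_1^x(\tau))\bigr]\alpha_2(\tau)\,d\tau,
\]
which is exactly the equality exploited in the proof of Lemma~\ref{L1}. Taking expectations and applying Cauchy--Schwarz together with the $L^2$ bound on $\alpha$ yields $|u^\sigma(y,t)-u^\sigma(x,t)|\le C|x-y|+\varepsilon$ with $C$ independent of $\sigma$, and letting $\varepsilon\to 0$ concludes.

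For the semiconcavity estimate $D^2u^\sigma\le C$ I adapt the argument of the deterministic semiconcavity lemma in Section~\ref{OC}: fix $\lambda\in[0,1]$, set $x_\lambda=\lambda x+(1-\lambda)y$, take an $\varepsilon$-optimal $\alpha$ for $u^\sigma(x_\lambda,t)$, and drive the dynamics from $x$, $y$ and $x_\lambda$ with the same $\alpha$ and the same Brownian realization. Since the noise cancels from all differences $X^x-X^{x_\lambda}$ and $X^y-X^{x_\lambda}$, the entire pathwise computation carried out there --- the Taylor expansion of $F(\cdot,\overline m(s))$ and $G(\cdot,\overline m(T))$ at $X^{x_\lambda}$, the bound $|I(\tau)|\le C\lambda(1-\lambda)(y_1-x_1)^2$ for the quantity $I(\tau):=-h(X_{\lambda,1}(\tau))+\lambda h(X_1^x(\tau))+(1-\lambda)h(X_1^y(\tau))$ obtained from a Taylor expansion of $h$, and the second-order remainder estimate --- transfers without modification. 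Taking expectations preserves the inequality and delivers $\lambda u^\sigma(x,t)+(1-\lambda)u^\sigma(y,t)\le u^\sigma(x_\lambda,t)+C\lambda(1-\lambda)|x-y|^2$ with $C$ depending only on $T$ and on the $C^2$-norms in (H1)--(H3). The main technical obstacle I expect is precisely the control of the second-order remainder in the degenerate second coordinate, but the pathwise linearity of the Brownian part makes the same chain of inequalities go through uniformly in $\sigma$.
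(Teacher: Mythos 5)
Your proposal is correct, but for the semiconcavity bound it takes a genuinely different route from the paper. The paper obtains the $L^\infty$ bound by a pure comparison-principle argument with the super/subsolutions $C\pm C(T-t)$, and proves $D^2u^\sigma\le C$ by a PDE (Bernstein-type) computation: it differentiates equation \eqref{eq:MFGv}-(i) twice in a fixed unit direction $v$, absorbs the quadratic term $-(D_G\partial_v u)^2$ against the lower-order terms $C(1+|D_G\partial_v u|)$, and concludes $\partial_{vv}u^\sigma\le C$ by comparison for the resulting linear parabolic inequality. You instead derive all three estimates from the stochastic control representation with additive noise, transporting the deterministic arguments of Section~\ref{OC} pathwise and taking expectations; only your Lipschitz step coincides with the paper's (which likewise invokes the representation through \eqref{1-stoc} and refers to the deterministic Lemma~\ref{L1}). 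Your route has the advantage of uniformity --- one mechanism yields all three bounds, and it sidesteps the need to justify differentiating the equation twice (the paper's computation implicitly requires enough regularity of $u^\sigma$ to make sense of $\partial_{vv}$ of the quadratic Hamiltonian, which in principle calls for a difference-quotient regularization). The price is that you must first establish the verification theorem identifying the bounded classical solution of Lemma~\ref{visco:buonapos} with the value function of the stochastic problem, and you must track that every pathwise inequality (the identity $X_1^y-X_1^x\equiv y_1-x_1$, the bound $|I(\tau)|\le C\lambda(1-\lambda)(y_1-x_1)^2$, and the remainder estimate) survives integration against an $\varepsilon$-optimal control whose $L^2$ norm is only controlled in expectation; your use of Cauchy--Schwarz and the uniform bound $\mathbb{E}\int_t^T|\alpha|^2\,ds\le C$ handles this correctly. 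Both approaches deliver the same constants, depending only on $T$ and the $C^2$ norms in (H1)--(H3).
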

\begin{proof}
The $L^\infty$-estimate easily follows from the Comparison Principle and assumption (H2) because the functions $w^\pm:= C\pm C(T-t)$ are respectively a super- and a subsolution for \eqref{eq:MFGv}-(i) if $C$ is sufficiently large.

We refer to \cite{C}  for the proof of the uniform Lipschitz continuity of the functions $u^\sigma$. The proof is similar to the deterministic one proved in Lemma \ref{L1} and it uses the representation formula by means a stochastic optimal control problem:
\[
u^\sigma(x,t)=\min \int_t^T\left[\frac12 |\alpha(\tau)|^2+f(Y(\tau),\tau)\right]\,d\tau+g(Y(T))
\]
where, in $[t,T]$,  $Y(\cdot)$ is governed by a stochastic differential equation
\begin{equation}
\label{1-stoc}
\left\{
\begin{array}{l}
dY_1=\alpha_1(t) dt +\sqrt{2\sigma} dB_{1,t}\\
dY_2= h(Y_1(t))\alpha_2(t) dt +\sqrt{2\sigma} dB_{2,t}
\end{array}
\right.,
\end{equation}
where  $Y(t)=x$ and $B_t$ is a standard 2-dimensional Brownian motion. (For an analytic proof see also \cite[Chapter XI]{Lie})

Let us now prove the part of the statement concerning the semiconcavity. We shall adapt the methods of \cite[Lemma 5.2]{C13}. We fix a direction $v=(\alpha_1,\alpha_2)$ with $|v|=1$ and compute the derivative of equation~\eqref{eq:MFGv}-(i) twice with respect to $v$ obtaining
\begin{equation*}
\begin{array}{l}
-\partial_t \partial_{vv}u-\sigma \Delta \partial_{vv} u-\partial_{vv}(F(x, \overline m(x,t))=-\partial_{vv}\left[\frac 1 2 \left((\partial_1u)^2+h(x_1)^2(\partial_2u)^2\right)\right]\\ \qquad
=-(D_G\partial_v u)^2-D_Gu\cdot D_G\partial_{vv} u-\frac12 \partial_{vv}(h^2)(\partial_2u)^2 - 4hh'\alpha_1\partial_2 u\partial_{2v}u\\ \qquad
\leq -(D_G\partial_v u)^2-D_Gu\cdot D_G\partial_{vv} u+ C(1+|D_G\partial_v u|)
\end{array}
\end{equation*}
(the last inequality is due to our assumptions and to the first part of the statement).
Since $-(D_G\partial_v u)^2+ C(1+|D_G\partial_v u|)$ is bounded above by a constant, we deduce
\[
-\partial_t \partial_{vv}u-\sigma \Delta \partial_{vv} u +D_Gu\cdot D_G\partial_{vv} u\leq C;\]
on the other hand, we have $\|\partial_{vv}u(T,\cdot)\|_\infty\leq C$ by assumption (H2) and we can conclude by comparison that $\partial_{vv}u\leq C'$ for a constant $C'$ independent of $\sigma$.
\end{proof}

Let us now prove some useful properties of the functions~$m^\sigma$.
\begin{lemma}\label{visco:lemma4}
Under the same assumptions of Lemma~\ref{visco:buonapos}, there exists a constant $K>0$, independent of $\sigma$, such that:
\begin{equation*}
\begin{array}{ll}
1.\quad &\|m^\sigma\|_\infty\leq K, \\
2.\quad &{\bf d}_1(m^\sigma(t_1)-m^\sigma(t_2))\leq K(t_2-t_1)^{1/2} \qquad \forall t_1,t_2\in(0,T),\\
3.\quad & \displaystyle\int_{\re^2}|x|^2\,dm^\sigma(t)(x)\leq K \left(\displaystyle\int_{\re^2}|x|^2\,dm_0(x)+1 \right)\qquad \forall t\in(0,T).
\end{array}
\end{equation*}
\end{lemma}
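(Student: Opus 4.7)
The plan is to prove the three estimates separately, each leveraging the uniform-in-$\sigma$ Lipschitz and semiconcavity bounds on $u^\sigma$ furnished by Lemma~\ref{visco:lemma5.2}.

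For estimate 1, I would use that $h$ depends only on $x_1$ to expand the Fokker-Planck equation~\eqref{eq:MFGv}-(ii) as
\[
\partial_t m^\sigma - \sigma\Delta m^\sigma - D_G u^\sigma\cdot D_G m^\sigma \;=\; m^\sigma\bigl(\partial_{x_1}^2 u^\sigma + h^2\partial_{x_2}^2 u^\sigma\bigr).
\]
The semiconcavity estimate $D^2 u^\sigma \le C$ and boundedness of $h$ force the right-hand side coefficient to be bounded above by some $C'$ independent of $\sigma$. Since $m^\sigma>0$ by Lemma~\ref{visco:buonapos}, setting $w:=e^{-C't}m^\sigma$ yields the linear parabolic inequality $\partial_t w - \sigma\Delta w - D_G u^\sigma\cdot D_G w \le 0$ with no zero-order term, and the standard comparison principle delivers $\sup w \le \|m_0\|_\infty$, hence estimate 1. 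Here I expect the main obstacle to lie: one must recognize that only the upper $D^2 u^\sigma$ bound (uniform in $\sigma$) suffices to control the zero-order coefficient from above, rather than the full $C^2$-norm of $u^\sigma$ (which is not uniform in $\sigma$) --- otherwise one would be unable to pass to the limit $\sigma\to 0^+$.

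For estimate 2, I would invoke the probabilistic representation of $m^\sigma(t)$ as the law of the diffusion $Y^\sigma$ solving~\eqref{1-stoc} driven by the optimal feedback $\alpha$, which has $\|\alpha\|_\infty \le \|Du^\sigma\|_\infty \le C$ uniformly in $\sigma$ by Lemma~\ref{visco:lemma5.2}; consequently the drift of~\eqref{1-stoc} is uniformly bounded as well, thanks to boundedness of $h$. The Kantorovich-Rubinstein duality and the integral form of the SDE then give, for $0\le t_1<t_2\le T$,
\[
{\bf d}_1(m^\sigma(t_1),m^\sigma(t_2)) \;\le\; \mathbb{E}|Y^\sigma(t_2)-Y^\sigma(t_1)| \;\le\; C(t_2-t_1) + \sqrt{2\sigma}\,\mathbb{E}|B(t_2)-B(t_1)|,
\]
and the estimate $\mathbb{E}|B(t_2)-B(t_1)| \le \sqrt{2(t_2-t_1)}$ yields the $1/2$-Hölder bound uniformly in $\sigma\in(0,1]$.

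For estimate 3, I would apply It\^{o}'s formula to $|Y^\sigma(t)|^2$: the boundedness of the drift of~\eqref{1-stoc} and the martingale property of the It\^{o} integral give
\[
\tfrac{d}{dt}\mathbb{E}|Y^\sigma(t)|^2 \;\le\; C\,\mathbb{E}|Y^\sigma(t)| + 4\sigma \;\le\; C'\bigl(1 + \mathbb{E}|Y^\sigma(t)|^2\bigr),
\]
and Gronwall's lemma applied with initial second moment $\mathbb{E}|Y^\sigma(0)|^2 = \int_{\re^2}|x|^2\,dm_0(x)$ closes the estimate. Both estimates 2 and 3 are therefore routine diffusion bounds with $\sigma$-independent drift, in the spirit of \cite[Section 4.4]{C} and \cite[Appendix]{C13}.
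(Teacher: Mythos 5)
Your proposal is correct and follows essentially the same route as the paper: the $L^\infty$ bound comes from expanding $\diver_G(m^\sigma D_G u^\sigma)$ and using only the one-sided semiconcavity bound $\partial_{ii}u^\sigma\le C$ together with a comparison argument, while estimates 2 and 3 use the probabilistic representation of $m^\sigma$ as the law of the diffusion with the uniformly bounded drift $(\partial_{x_1}u^\sigma, h^2\partial_{x_2}u^\sigma)$, exactly as in the lemmas of \cite{C} that the paper cites for these two points. The only cosmetic difference is that you normalize by $e^{-C't}$ where the paper compares directly with the supersolution $Ce^{Ct}$.
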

\begin{proof}
1. In order to prove this $L^\infty$ estimate, we shall argue as in \cite[Appendix]{C13}; for simplicity, we drop the $\sigma$'s. We note that
\[
\diver _G (m D_G u)=D_Gm\cdot D_Gu +m(\partial_{11}u +h^2 \partial_{22}u)\leq D_Gm\cdot D_Gu +Cm
\]
because of the semiconcavity of~$u$ established in Lemma~\ref{visco:lemma5.2} yields $\partial_{ii}u\leq C$ for $i=1,2$ (see \cite[Proposition1.1.3-(e)]{CS}) and $m\geq 0$. Therefore, by assumption (H2) the function~$m$ satisfies
\[
\partial_t m-\sigma \Delta m\leq D_Gm\cdot D_Gu +Cm,\qquad m(x,0)\leq C;
\]
using $w=Ce^{ C t}$ as supersolution (recall that $C$ is independent of $\sigma$), we infer: $\|m\|_\infty\leq w=Ce^{ C T}$.\\
To prove Points 2 and 3  as in the proof of \cite[Lemma 3.4 and 3.5]{C}, it is expedient to introduce the stochastic differential equation
\begin{equation}\label{11C}
dX_t= b(X_t,t) dt +\sqrt{2\sigma} dB_t,\qquad X_0=Z_0
\end{equation}
where $b=(\frac{\partial u^\sigma}{\partial x_1},h^2 \frac{\partial u^\sigma}{\partial x_2})$,
$B_t$ is a standard 2-dimensional Brownian motion, and ${\mathcal L}(Z_0)=m_0$. By standard arguments, (see \cite{Kr} and \cite[Chapter 5]{KS})
\begin{equation}
\label{mstoch}
m(t):={\mathcal L}(X_t)
\end{equation}
is a weak solution to~\eqref{eq:MFGv}-(ii).

The rest of the proof of Points 2 and 3 follows the same arguments of \cite[Lemma 3.4]{C} and, respectively, of \cite[Lemma 3.5]{C}; therefore, we shall omit it and we refer to \cite{C} for the detailed proof.
\end{proof}

Let us now prove that the  $u^\sigma$'s are uniformly bounded and uniformly continuous in time.

\begin{lemma}\label{visco:lemma5}
Under the same assumptions of Lemma~\ref{visco:buonapos}, the function~$u^{\sigma}$ is uniformly continuous in time uniformly in $\sigma$.
\end{lemma}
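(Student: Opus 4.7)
The plan is to mirror the argument for the deterministic Lipschitz-in-time statement (Lemma 2.3, part 2), but in the stochastic control setting that represents $u^\sigma$. Specifically, I will use the stochastic Dynamic Programming Principle together with the already-established uniform-in-$\sigma$ bounds $\|u^\sigma\|_\infty,\ \|Du^\sigma\|_\infty\le C$ from Lemma~\ref{visco:lemma5.2}, and the control-theoretic representation
$$u^\sigma(x,t)=\inf_{\alpha}\mathbb E\!\left[\int_t^T\Bigl(\tfrac12|\alpha(\tau)|^2+f(Y(\tau),\tau)\Bigr)d\tau+g(Y(T))\right],$$
where $Y$ solves the SDE \eqref{1-stoc} with $Y(t)=x$. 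The final goal is a Hölder-$1/2$ modulus in $t$ uniform in $\sigma\in(0,1]$ and in $x$.

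\textbf{Upper bound.} Fix $0\le t_1<t_2\le T$ and $x\in\mathbb R^2$. By DPP,
$$u^\sigma(x,t_1)\le \mathbb E\!\left[\int_{t_1}^{t_2}f(Y(\tau),\tau)\,d\tau+u^\sigma(Y(t_2),t_2)\right]$$
with the choice $\alpha\equiv 0$, so $Y(\tau)-x=\sqrt{2\sigma}(B_\tau-B_{t_1})$. Using the uniform Lipschitz bound on $u^\sigma$ in $x$ and $\mathbb E|Y(t_2)-x|\le 2\sqrt{2\sigma(t_2-t_1)}$, together with $\|f\|_\infty\le C$, I obtain $u^\sigma(x,t_1)-u^\sigma(x,t_2)\le C\bigl[(t_2-t_1)+\sqrt{\sigma(t_2-t_1)}\bigr]\le C'(t_2-t_1)^{1/2}$ for $\sigma\le 1$.

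\textbf{Lower bound.} For $\varepsilon>0$ pick an $\varepsilon$-optimal control $\alpha^\varepsilon$ for $u^\sigma(x,t_1)$ with associated process $Y^\varepsilon$. Then
$$u^\sigma(x,t_1)+\varepsilon\ge \mathbb E\!\left[\int_{t_1}^{t_2}\!\Bigl(\tfrac12|\alpha^\varepsilon|^2+f(Y^\varepsilon,\tau)\Bigr)d\tau+u^\sigma(Y^\varepsilon(t_2),t_2)\right].$$
Dropping $\tfrac12|\alpha^\varepsilon|^2\ge 0$ and applying the Lipschitz bound on $u^\sigma$ in $x$ yields
$$u^\sigma(x,t_1)-u^\sigma(x,t_2)\ge -\|f\|_\infty(t_2-t_1)-L\,\mathbb E|Y^\varepsilon(t_2)-x|-\varepsilon.$$
From the SDE and boundedness of $h$, $\mathbb E|Y^\varepsilon(t_2)-x|\le (1+\|h\|_\infty)\mathbb E\!\int_{t_1}^{t_2}|\alpha^\varepsilon|\,d\tau+2\sqrt{2\sigma(t_2-t_1)}$. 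Cauchy--Schwarz bounds the first summand by $(t_2-t_1)^{1/2}\bigl(\mathbb E\!\int_{t_1}^{t_2}|\alpha^\varepsilon|^2\,d\tau\bigr)^{1/2}$, and the uniform bound $\|u^\sigma\|_\infty\le C$ together with boundedness of $f$ and $g$ gives $\mathbb E\!\int_{t_1}^T\tfrac12|\alpha^\varepsilon|^2\,d\tau\le C$ (independent of $\sigma$ and of $\varepsilon\in(0,1]$). Letting $\varepsilon\to 0$ produces the matching lower bound $u^\sigma(x,t_1)-u^\sigma(x,t_2)\ge -C(t_2-t_1)^{1/2}$ for $\sigma\in(0,1]$.

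\textbf{Main obstacle.} The one delicate point is producing the uniform-in-$\sigma$ $L^2$ bound on the $\varepsilon$-optimal controls $\alpha^\varepsilon$, since without it the stochastic displacement cannot be controlled by $(t_2-t_1)^{1/2}$. This is where the $\sigma$-independence of $\|u^\sigma\|_\infty$ (itself already established in Lemma~\ref{visco:lemma5.2}) is essential: it converts ``$\varepsilon$-optimality + boundedness of $f,g$'' into an a priori bound on $\mathbb E\!\int|\alpha^\varepsilon|^2$ uniform in $\sigma$. Once this ingredient is in place, the Hölder-$1/2$ time modulus follows uniformly in $x$, $\sigma$, and the statement is proved.
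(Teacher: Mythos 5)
Your argument is correct, but it takes a genuinely different route from the paper. The paper proves this lemma purely at the PDE level: it first shows that $\omega^{\pm}=u^\sigma_f(x)\pm C_1(T-t)$ are super/subsolutions of \eqref{eq:MFGv}-(i), obtaining $\|u^\sigma(\cdot,t)-u^\sigma_f\|_\infty\le C_1(T-t)$, and then compares $u^\sigma$ with its time-translate corrected by $C_1h+\eta(h)(T-t)$, where $\eta(h):=C\sup_t{\bf d}_1(\overline m(t),\overline m(t-h))$ measures the time-modulus of the source term $F(x,\overline m(t))$; the resulting modulus of continuity therefore hinges on the H\"older regularity of $\overline m$ from \eqref{mcnd} and on assumption (H2). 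You instead work with the stochastic control representation of $u^\sigma$ (which the paper itself invokes in the proof of Lemma~\ref{visco:lemma5.2}) and the dynamic programming principle: the zero control gives the upper bound, and an $\varepsilon$-optimal control — whose $L^2$ norm is bounded uniformly in $\sigma$ thanks to $\|u^\sigma\|_\infty\le C$ and the boundedness of $f,g$ — gives the lower bound. Your route buys a clean, explicit H\"older-$1/2$ modulus that does not use the time-regularity of $\overline m$ at all (only $\|f\|_\infty$, $\|g\|_\infty$ and the uniform space-Lipschitz bound enter), at the price of importing the stochastic DPP and a conditioning argument for controlled diffusions; the paper's route stays entirely within comparison principles for the parabolic equation but needs \eqref{mcnd} to control the time-shifted source term. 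Both yield a modulus of order $h^{1/2}$ uniform in $\sigma\in(0,1]$, so the two proofs are interchangeable for the purposes of Proposition~\ref{VV}.
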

\begin{proof}
We shall follow the arguments in \cite[Theorem 5.1 (proof)]{C13}.
Let $u^\sigma_f:=u^\sigma(x,T)$; recall that, by assumption (H2),  $u^\sigma_f$ are bounded in $C^2$ uniformly in $\sigma$. Moreover, again by assumption {\rm(H2)}, there exists a constant $C_1$ sufficiently large such that the functions $\omega^{\pm}=u^\sigma_f(x)\pm C_1(T-t)$ are respectively super- and subsolution of \eqref{eq:MFGv}-(i) for any $\sigma$; actually, for $C_1=2C$ we have
\[
-\partial_t \omega^+-\sigma \Delta \omega^++\frac 1 2|D_G \omega^+|^2-F(x, \overline m)\geq C_1-\sigma C-C\geq 0
\]
and similarly for $\omega^-$.
Hence from the comparison principle we get
\begin{equation}\label{duestelle}
\|u^{\sigma}(x,t)-u^\sigma_f(x)\|_\infty\leq C_1(T-t)\qquad \forall t\in [0,T].
\end{equation}
We look now the source term $F(x,\overline m)$ of \eqref{eq:MFGv}-(i).
The Lipschitz continuity of $F$ w.r.t. $m$ (see assumption (H2)) and the H\"older continuity of $\overline m$ (see assumption~\eqref{mcnd}) imply:
$$
\sup_{t\in[h,T]}\|F(x,\overline m(t))-F(x,\overline m(t-h))\|_\infty\leq C \sup_{t\in[h,T]} {\bf d}_1(\overline m(t),\overline m(t-h))
=:\eta(h).
$$
The function $v_h^{\sigma}(x,t):= u^{\sigma}(x, t-h)+C_1h+\eta(h)(T-t)$ satisfies
\begin{multline*}
-\partial_t v_h^{\sigma}(x,t) -\sigma \Delta v_h^{\sigma}(x,t)+\frac 1 2|D_G v_h^{\sigma}(x,t)|^2-F(x, \overline {m})(x,t)+\eta(h)\\
= F(x, \overline m)(x,t-h)-F(x,\overline m)(x,t)+\eta(h)\geq 0\qquad \forall t\in[h,T]
\end{multline*}
and also $v_h^{\sigma}(x,T)= u^{\sigma}(x, T-h)+C_1h\geq u^{\sigma}(x, T)$ by estimate~\eqref{duestelle}; therefore, again by comparison principle, we get $
u^{\sigma}(x, t-h)+C_1h+\eta(h)(T-t)\geq u^{\sigma}(x, t)$. In a similar way we also obtain $u^{\sigma}(x, t-h)-C_1h-\eta(h)(T-t)\leq u^{\sigma}(x, t)$ accomplishing the proof.
\end{proof}

\begin{proof}[Proposition \ref{VV}]
We shall follow the proof of \cite[Theorem 5.1]{C13} (see also \cite[Theorem 4.20]{C}). We observe that, for all $\sigma\in(0,1)$, $m^\sigma$ belongs to  $C^0([0,T],\mathcal K)$ where $\mathcal K:=\{\mu\in \mathcal P_1:\, \textrm{$\mu$ satisfies Point 3 of Lemma~\ref{visco:lemma4}}\}$; moreover, we recall from \cite[Lemma 5.7]{C} that $\mathcal K$ is relatively compact in $\mathcal P_1$.\\
Lemma~\ref{visco:lemma5.2} and Lemma~\ref{visco:lemma5} imply that $u^\sigma$ uniformly converge to some function~$u$ and by standard stability result for viscosity solutions, the function~$u$ solves \eqref{HJ}, $u$ is Lipschitz continuous in~$x$, $Du^\sigma\to Du$ a.e. (because of the semiconcavity estimate of Lemma~\ref{visco:lemma5.2} and \cite[Theorem 3.3.3]{CS}), so, in particular, $D_G u^\sigma\to D_G u$ a.e.. \\
By the bounds on $m^\sigma$ contained respectively in Points 1 and 2 of  Lemma~\ref{visco:lemma4}, we obtain that, possibly passing to a subsequence, as $\sigma \to 0^+$, $m^\sigma$ converge to some $m\in C^0([0,T],\mathcal K)$ in the $C^0([0,T],\mathcal P_1)$ topology and in $L^\infty_{loc}((0,T)\times\re^2)$-weak-$*$ topology. Moreover we deduce that $m(0)=m_0$.
On the other hand, since $m^\sigma$ is a solution to \eqref{eq:MFGv}-(ii), for any $\psi\in C^\infty_0((0,T)\times\re^2)$, there holds
\[
\int_0^T\int_{\re^2}m^\sigma\left(-\partial_t \psi -\sigma \Delta \psi+D\psi\cdot D_Gu^\sigma \right)\,dx\, dt=0;
\]
letting $\sigma \to 0^+$, by the $L^\infty_{loc}$-weak-$*$ convergence of~$m^\sigma$ and by the convergence a.e. $D_G u^\sigma\to D_G u^\sigma$, we conclude that the function~$m$ solves \eqref{continuity}.
\end{proof}

\begin{remark}
As a matter of facts, we proved that the solution~$m$ to problem~\eqref{continuity} 
fulfills the estimates in Lemma~\ref{visco:lemma4}.
\end{remark}

\subsection{Uniqueness of the solution}\label{uniq}

This section is devoted to establish the following uniqueness result for problem~\eqref{continuity}.
\begin{proposition}\label{!FP}
Under assumptions {\rm (H1) -- (H4)},
problem \eqref{continuity} admits at most one bounded solution $m$.
Moreover, the function $m$ satisfies:
\begin{equation}\label{ambrosio2}
\int_{\re^2} \phi\, dm(t)=\int_{\re^2}\phi(\overline{ \gamma}_x(t))\,m_0(x)\, dx, \qquad \forall \phi\in C^0_0(\R^2), \, \forall t\in[0,T]
\end{equation}
where, for a.e. $x\in\re^2$,  $\overline{\gamma}_x$ is the solution to \eqref{chflow}.

\end{proposition}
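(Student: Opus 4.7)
The plan is to combine Ambrosio's superposition principle with the uniqueness of the optimal trajectories of {\rm (OC)} established in Section~\ref{OC}, in the spirit of \cite[Sect.~4.4]{C} but accounting for the degeneracy caused by $h$. Introduce the drift $b(x,s):=(-\partial_{x_1}u(x,s),\,-h^2(x_1)\partial_{x_2}u(x,s))$, so that \eqref{continuity} is the standard continuity equation $\partial_t m+\diver(mb)=0$ and \eqref{chflow} is the ODE $\dot\gamma=b(\gamma,s)$. Since $u$ is Lipschitz in $x$ by Lemma~\ref{L1} and $h$ is bounded, $b\in L^\infty$; together with the boundedness of $m$ this gives $\int_0^T\!\!\int|b|\,dm(s)\,ds<\infty$. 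I would then apply Ambrosio's superposition principle \cite[Theorem~8.2.1]{AGS} to produce a Borel probability measure $\eta$ on $C^0([0,T];\re^2)$ with $(e_t)_\#\eta=m(t)$ for every $t$ and concentrated on absolutely continuous integral curves of $b$; in particular $(e_0)_\#\eta=m_0$.

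The core step is to identify $\eta$-a.e.\ integral curve with the optimal trajectory of {\rm (OC)} issued from $\gamma(0)$. If $\gamma$ solves \eqref{chflow}, setting $\alpha:=(-u_{x_1},-h(x_1)u_{x_2})(\gamma,\cdot)$ the dynamics~\eqref{eq:HJ2} are fulfilled and $|\alpha|^2=|D_G u|^2(\gamma,\cdot)$. A chain-rule computation along $\gamma$, justified by the semiconcavity of $u$, combined with the Hamilton--Jacobi equation in \eqref{HJ} gives
\[
u(\gamma(0),0)=g(\gamma(T))+\int_0^T\Big(\tfrac12|D_G u|^2+f\Big)(\gamma,s)\,ds=J_0(\gamma,\alpha),
\]
so $\gamma$ is optimal for $u(\gamma(0),0)$. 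Since $u(\cdot,0)$ is semiconcave and $m_0$ is absolutely continuous by Hypothesis~(H4), $u(\cdot,0)$ is differentiable at $m_0$-a.e.\ $x$; I would then invoke Corollary~\ref{cor:nobifurc} on $\{h(x_1)\neq 0\}$ and Theorem~\ref{th:nobifurc}(2) together with Lemma~\ref{lemma:singular} on $\{h(x_1)=0\}$ to show that the optimal trajectory starting at such $x$ is unique; hence so is the integral curve of $b$ issued from $x$, which I call $\overline\gamma_x$.

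Finally, disintegrating $\eta$ with respect to the initial position yields $\eta=\Gamma_\#m_0$ with $\Gamma(x):=\overline\gamma_x$, and therefore
\[
\int_{\re^2}\!\phi\,dm(t)=\int\phi(\gamma(t))\,d\eta(\gamma)=\int_{\re^2}\!\phi(\overline\gamma_x(t))\,m_0(x)\,dx,
\]
which is exactly \eqref{ambrosio2}. Because $\Gamma$ depends only on $u$ (and thus only on the fixed $\overline m$), this representation is independent of the particular bounded solution $m$, yielding uniqueness.

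The hardest point will be uniqueness of the optimal trajectory at $m_0$-a.e.\ starting point, especially on the set $\{h(x_1)=0\}$, which under Hypothesis~\ref{BasicAss} need not have Lebesgue measure zero: Theorem~\ref{th:nobifurc} there permits an initial rest phase, so one must exploit differentiability of $u$ at $(x,0)$ together with Lemma~\ref{lemma:singular} to pin down both the rest time $t_{x^*}$ and the departure direction. A secondary technical issue is justifying the chain rule for $s\mapsto u(\gamma(s),s)$, since $u$ is only Lipschitz and semiconcave rather than $C^1$; this will require working at points of differentiability of $u$ along $\gamma$ and using the HJ equation pointwise almost everywhere.
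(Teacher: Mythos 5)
Your overall strategy coincides with the paper's: apply Ambrosio's superposition principle and the disintegration theorem to a bounded solution $m$, show that $\eta$-a.e.\ curve is an integral curve of \eqref{chflow} and hence (by a chain-rule/Lebourg-type argument using semiconcavity and the HJ equation, which is exactly the paper's Lemma~\ref{B}) an optimal trajectory, then use uniqueness of the optimal trajectory at $m_0$-a.e.\ starting point to conclude $\eta_x=\delta_{\overline\gamma_x}$ and read off \eqref{ambrosio2} and uniqueness of $m$. The a.e.\ uniqueness itself is the paper's Lemma~\ref{4.9}(3) combined with the last part of Lemma~\ref{B}: at a point where $u(\cdot,0)$ is $G$-differentiable and $h(x_1)\neq 0$, the optimal trajectory and the solution of \eqref{chflow} are both unique and coincide.

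The one genuine problem is your treatment of the set $\{h(x_1)=0\}$. The paper simply observes that $\operatorname{meas}\{x\in\re^2: h(x_1)=0\}=0$ (reading the total disconnectedness of $\mathcal Z$ in Hypothesis~\ref{BasicAss} as implying that $\mathcal Z$ is Lebesgue-null, as it is in all the examples considered), so that, $m_0$ being absolutely continuous, this set is $m_0$-negligible and no analysis at rest points is needed. Your proposed alternative --- invoking Theorem~\ref{th:nobifurc}(2) together with Lemma~\ref{lemma:singular} to get uniqueness of the optimal trajectory at such $x$ --- does not close the gap you yourself flag: that theorem only excludes bifurcation of a \emph{given} optimal trajectory $x^*$ at times $r>t_{x^*}$, i.e.\ after its rest time; it says nothing about two distinct optimal trajectories issued from the same $(x,0)$ with different rest times (or different departures at the same rest time), and $G$-differentiability of $u$ at such a point only pins down $D_Gu(x,0)=(u_{x_1},0)$, which does not determine the costate $p_2(0)$ when $h(x_1)=0$ (this is precisely why Lemma~\ref{4.9}(3) requires $h(x_1)\neq0$). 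So either you add the measure-zero observation, after which the problematic set can be discarded and your argument becomes the paper's, or you would need a genuinely new uniqueness statement at rest points, which neither you nor the paper provides.
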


In order to prove this result, it is expedient to establish some properties of the optimal trajectories for the control problem defined in Section \ref{OC} and of the value function $u(x,t)$, defined in Subsection \ref{vf}.
For any $(x,t) \in \R^2\times[0,T]$, let $ \mathcal U(x,t)$ be the set of the optimal controls of the minimization problem {\rm (OC)} in Definition~\ref{def:OCD}.
We refer the reader to Appendix B, for the precise definition of $G$-differentiability and for its properties.

\begin{lemma}\label{4.9} The following properties hold:
\begin{enumerate}
\item
 $D_Gu(x ,t)$ exists if and only if  $\alpha(t)$ is the same value for any $\alpha(\cdot)\in {\cal{U}}(x,t)$.
Moreover $D_Gu(x,t)=-\alpha(t)$ (i.e., $u_{x_1}(x,t)=-\alpha_1(t)$, $h(x_1(t))u_{x_2}(x,t)=-\alpha_2(t)$).
\item In particular, if $\mathcal U(x,t)$ is a singleton then $D_Gu(x(s),s)$ exists for any $s\in [t,T]$
where $x(s)$ is the optimal trajectory associated to the singleton of $\mathcal U(x,t)$.
\item If $x$ is such that $h(x_1)\neq 0$ and  $D_Gu(x,t)$ exists then there is a unique optimal trajectory starting from $x$ and $D_Gu(x,t)=-\alpha(t)$
and hence
\begin{equation}\label{3.5.3}
x'_1(t)=-\partial_{x_1} u(x,t),\qquad x'_2(t)=-h^2(x_1)\partial_{x_2} u(x,t).
\end{equation}
\end{enumerate}\end{lemma}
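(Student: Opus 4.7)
The plan is to treat the three assertions in order, using the Dynamic Programming Principle (Proposition~\ref{proposition:carloclaudio}), the Pontryagin characterization of Proposition~\ref{MPP}, the no-bifurcation Theorem~\ref{th:nobifurc}, and the definition of $G$-differentiability from Appendix~B. The key structural fact is that the feedback identity~\eqref{tag:alpha*} realises the correspondence $\alpha^*(t)\leftrightarrow(p_1(t),h(x_1)p_2(t))$ as a bijection irrespective of whether $h(x_1)$ vanishes; thus identifying $\alpha^*(t)$ is the same as identifying the pair dual to $D_Gu(x,t)$.

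For the forward direction of Part~1, I would fix $\alpha\in\mathcal U(x,t)$ with trajectory $x^*$ and costate $p$ and, for small $\varepsilon>0$ and $v\in\R^2$, compare $x^*$ with the trajectory $y^\varepsilon$ emanating from $x+\varepsilon v$ driven by the same control $\alpha$. A direct calculation gives $\xi(s):=y^\varepsilon(s)-x^*(s)$ with $\xi_1\equiv\varepsilon v_1$ and, to first order,
\[
\xi_2(s)=\varepsilon v_2+\varepsilon v_1\!\int_t^s\!h'(x_1^*(\tau))\,\alpha_2(\tau)\,d\tau+o(\varepsilon).
\]
Integrating $\tfrac{d}{ds}(p\cdot\xi)$ with the help of the adjoint equations~\eqref{tag:adjoint1}--\eqref{tag:adjoint2} and the transversality~\eqref{tag:transversality}, most terms cancel and one obtains
\[
J_t(y^\varepsilon,\alpha)-J_t(x^*,\alpha)=-p(t)\cdot\varepsilon v+o(\varepsilon).
\]
Since $u(x+\varepsilon v,t)\le J_t(y^\varepsilon,\alpha)$ while $u(x,t)=J_t(x^*,\alpha)$, this gives the one-sided envelope $u(x+\varepsilon v,t)-u(x,t)\le -p(t)\cdot\varepsilon v+o(\varepsilon)$, valid for every $\alpha\in\mathcal U(x,t)$ and its associated $p$. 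When $D_Gu(x,t)$ exists, comparing the bounds along $\pm v$ forces $p_1(t)=-u_{x_1}(x,t)$ always, and $p_2(t)=-u_{x_2}(x,t)$ when $h(x_1)\neq 0$; when $h(x_1)=0$ the product $h(x_1)p_2(t)$ is automatically zero, so no extra information on $p_2(t)$ is needed. Either way,~\eqref{tag:alpha*} yields $\alpha(t)=-D_Gu(x,t)$ for every $\alpha\in\mathcal U(x,t)$. The converse is a general criterion from Appendix~B: a common value of $\alpha(t)$ supplies matching upper and lower envelopes for $u$ at $(x,t)$, which combined with the semiconcavity already established in this section gives $G$-differentiability.

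For Part~2, let $\mathcal U(x,t)=\{\alpha^*\}$ with trajectory $x^*$ and fix $s\in[t,T]$. By point~2 of Proposition~\ref{proposition:carloclaudio}, $\alpha^*|_{[s,T]}\in\mathcal U(x^*(s),s)$. If there were another optimal control $\beta\in\mathcal U(x^*(s),s)$ distinct from $\alpha^*|_{[s,T]}$, the concatenation of $\alpha^*|_{[t,s]}$ with $\beta$ would, by point~1 of the same proposition, produce a second element of $\mathcal U(x,t)$, contradicting the singleton hypothesis. Hence $\mathcal U(x^*(s),s)$ is itself a singleton, and Part~1 applied at $(x^*(s),s)$ delivers $D_Gu(x^*(s),s)$.

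For Part~3, assume $h(x_1)\neq 0$ and that $D_Gu(x,t)$ exists. Part~1 gives $\alpha(t)=-D_Gu(x,t)$ for every $\alpha\in\mathcal U(x,t)$; via~\eqref{tag:alpha*} and the non-vanishing of $h(x_1)$ this fully determines the initial costate as $p(t)=(-u_{x_1}(x,t),-u_{x_2}(x,t))$. Any two optimal pairs $(x^*,p)$ and $(\widetilde x^*,\widetilde p)$ are then absolutely continuous solutions of the same Cauchy problem for the system \eqref{tag:1}--\eqref{tag:4} with datum $(x,p(t))$ at $t$; Hypothesis~\ref{BasicAss} provides enough regularity in $(x,p)$ for Carath\'eodory uniqueness, so the optimal trajectory is unique. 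Substituting $\alpha_1^*=p_1$ and $\alpha_2^*=h(x_1)p_2$ into~\eqref{eq:HJ2} produces~\eqref{3.5.3}. The main obstacle lies in the forward direction of Part~1: isolating the right class of test perturbations so that the envelope bound really captures the Grushin gradient $D_Gu$ rather than an unrelated Euclidean gradient, particularly at points where $h(x_1)=0$ and the second coordinate of $D_Gu$ is frozen to zero while $u_{x_2}$ itself need not exist; handling this is precisely the purpose of the $G$-differentiability framework of Appendix~B.
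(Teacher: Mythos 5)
Your handling of Parts 2 and 3, and of the implication ``$D_Gu(x,t)$ exists $\Rightarrow$ $\alpha(t)$ is common to all optimal controls, with $D_Gu(x,t)=-\alpha(t)$'', is sound and is essentially the paper's argument. The paper perturbs the initial datum in the Grushin-adapted form $(x_1+v_1,x_2+h(x_1)v_2)$ and computes the first-order variation of the cost explicitly through \eqref{tag:adjoint1}--\eqref{tag:adjoint2} and \eqref{tag:transversality}; your Euclidean perturbation combined with integrating $\tfrac{d}{ds}(p\cdot\xi)$ is an equivalent packaging of the same computation, and your observation that at points with $h(x_1)=0$ only the product $h(x_1)p_2(t)$ is needed (while the second component of $D_Gu$ is automatically zero there) correctly disposes of the degenerate case. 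Parts 2 and 3 follow the paper verbatim (concatenation via Proposition~\ref{proposition:carloclaudio}, then recovery of $p(t)$ from \eqref{tag:alpha*} and Carath\'eodory uniqueness for \eqref{tag:1}--\eqref{tag:4}).

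The genuine gap is the converse half of Part 1: ``$\alpha(t)$ common to all $\alpha\in\mathcal U(x,t)$ $\Rightarrow$ $D_Gu(x,t)$ exists.'' You dismiss this as ``a general criterion from Appendix B'' supplying ``matching upper and lower envelopes,'' but no such criterion exists and your own computation only yields the one-sided bound: it places $-(\alpha_1(t),h(x_1)\alpha_2(t))$ in $D^+_Gu(x,t)$, i.e.\ it exhibits \emph{one} element of the $G$-superdifferential, whereas semiconcavity upgrades this to $G$-differentiability (Proposition~\ref{prp:gdiff}, Point 4) only once one knows that $D^+_Gu(x,t)$ is a \emph{singleton}. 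A matching lower envelope would require comparing $u(x,t)$ with $u(y,t)$ along optimal controls for nearby starting points $y$ and showing that their initial costates converge to $p(t)$ as $y\to x$ --- and that is precisely the substantive step the paper carries out: given $\pi\in D^*_Gu(x,t)$ realized by $x_n\to x$, it writes $D_Gu(x_n,t)=-\alpha_n(t)$ by the already-proved direction, derives uniform bounds on $\alpha_n$ and $\alpha_n'$ from \eqref{tag:p}--\eqref{tag:alpha} and the Pontryagin system, extracts a uniformly convergent subsequence by Ascoli--Arzel\`a, checks by stability that the limit control is optimal at $(x,t)$, and concludes $D^*_Gu(x,t)=\{-\alpha(t)\}$, whence $D^+_Gu(x,t)$, being the convex hull of $D^*_Gu(x,t)$ by Theorem~\ref{thm336}, is a singleton. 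This compactness-and-stability argument is the hard half of Part 1, it is entirely absent from your proposal, and Part 2 (which invokes the converse at $(x(s),s)$) inherits the gap.
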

\begin{proof}
1. We prove that  if $D_Gu(x ,t)$ exists then for any $\alpha(\cdot)\in {\cal{U}}(x,t)$ we have that $\alpha(t)$ is unique and $D_Gu(x,t)=-\alpha(t)$.
For any  $\alpha(\cdot)\in {\cal{U}}(x,t)$, let $x(\cdot)$ be the corresponding optimal trajectory. Then
$x(\cdot)$ and $\alpha(\cdot)$ satisfy the necessary conditions for optimality proved in Proposition
\ref{prop:pontriagin}.
Take $v=(v_1,v_2)\in\R^2$ and consider the solution $y(\cdot)$ of \eqref{eq:HJ2} with initial condition $y(t)=(x_1+v_1, x_2+h(x_1)v_2)$ and control~$\alpha$, namely
\begin{eqnarray*}
y_1(s)&=&x_1+v_1+\int_t^s\,\alpha_1(\tau)d\tau=x_1(s)+v_1,\\
y_2(s)&=&x_2+h(x_1)v_2+\int_t^s\,h(y_1(\tau))\alpha_2(\tau)d\tau\\
&=&x_2(s)+h(x_1)v_2 +
\int_t^s\,[h(y_1(\tau))-h(x_1(\tau))]\alpha_2(\tau)d\tau.
\end{eqnarray*}
Hence there holds
\begin{multline*}
u(x_1+v_1,x_2+h(x_1)v_2,t)-u(x_1,x_2,t)\leq\\
\int_t^T\left[ f\left(x_1(s)+v_1, x_2(s)+h(x_1)v_2 +
\int_t^s\,[h(y_1(\tau))-h(x_1(\tau))]\alpha_2(\tau)d\tau\right)- f(x_1(s), x_2(s))\right]ds+\\
g(y(T))-g(x(T)).
\end{multline*}
For $v=t(\hat v_1, \hat v_2)$ with $|(\hat v_1, \hat v_2)|=1$ and $t\in\R^+$, as $t\to 0^+$, the $G$-differentiability of $u$ at $(x,t)$ entails
\begin{equation*}
D_G u(x,t)\cdot (\hat v_1, \hat v_2)\leq (I_1,I_2)\cdot (\hat v_1, \hat v_2)
\end{equation*}
where
\begin{eqnarray*}
I_1&:=& \int_t^T\, f_{x_1}(x(s))ds+\int_t^T \bigg(f_{x_2}(x(s))\int_t^s\,h'(x_1(\tau))\alpha_2(\tau)d\tau\bigg)ds+g_{x_1}(x(T))+ \\
&& g_{x_2}(x(T))\int_t^T\,h'(x_1(\tau))\alpha_2(\tau)d\tau\\
I_2&:=& h(x_1)\left(\int_t^T\, f_{x_2}(x(s))ds+g_{x_2}(x(T))\right).
\end{eqnarray*}
By the arbitrariness of $(\hat v_1, \hat v_2)$, we get
\begin{eqnarray*}
D_G u(x,t)&=& (I_1,I_2).
\end{eqnarray*}
By \eqref{tag:4} and~\eqref{tag:transversality}, we obtain
\begin{eqnarray*}
I_1&=&\int_t^T\, f_{x_1}(x(s))ds+ \int_t^T \, (p_2^{\prime}(s)\int_t^s\,h^{\prime}(x_1(\tau))\alpha_2(\tau)d\tau)ds+g_{x_1}(x(T))\\
&&\qquad -p_2(x(T))\int_t^T\,h'(x_1(\tau))\alpha_2(\tau)d\tau\\
&=&\int_t^T\, f_{x_1}(x(s))ds-\int_t^T \, p_2(s)h^{\prime}(x_1(s))\alpha_2(s)ds+g_{x_1}(x(T))\\
&=&-\alpha_1(t)\end{eqnarray*}
where the last inequality is due to~\eqref{tag:p} and \eqref{tag:alpha*}.
On the other hand, again by \eqref{tag:p} and \eqref{tag:alpha*}, we have
\begin{equation*}
I_2= -h(x_1)p_2(t)=-\alpha_2(t).
\end{equation*}
The last three equalities imply: $D_G u(x,t)=-\alpha(t)$ which uniquely determines the value of $\alpha(\cdot)$ at time~$t$.

Conversely we prove that, if for any $\alpha(\cdot)\in {\cal{U}}(x,t)$,
$\alpha(t)$ is unique then  $D_Gu(x,t)$ exists.
To prove the G-differentiability of $u(\cdot, t)$ in $x$, by the semiconcavity of $u$, we need to prove that $D_G^*u(x,t)$ is a singleton (see Theorem~\ref{thm336} in Appendix B below).
Let $\pi\in D_G^*u(x,t)$. By definition of $D_G^*u(x,t)$ there exist two sequences $\{x_n\}$, $\{\pi_n=D_Gu(x_n,t)\}$ such that
\begin{equation}\label{1DG}
x_n\to x,\quad \pi_n\to\pi.
\end{equation}
Consider $\alpha_n\in\mathcal U(x_n,t)$;  by the other part of the statement (already proven), we know that
\begin{equation}\label{2DG}
-\alpha_n(t)=D_Gu(x_n,t)=\pi_n.
\end{equation}
From the definition of the cost $J$ (see Section \ref{OC}), using the optimality of $\alpha_n$ and the boundedness of the data we get
\begin{equation}\label{3DG}
\|\alpha_n\|_{L^2} \leq C,\  \text {for any } n.
\end{equation}
Let $x_n$ be the trajectory associated to $\alpha_n$, namely
\begin{equation*}
x_{n1}(s)=x_{n1}+\int_t^s\,\alpha_{n1}(\tau)d\tau,\qquad
x_{n2}(s)=x_{n2}+\int_t^s\,h(x_{n1}(\tau))\alpha_{n2}(\tau)d\tau.
\end{equation*}
From \eqref{3DG} and the boundedness of $h$, there exists a constant $C$ (independent of $n$) such that
\begin{equation}\label{4DG}
\|x_{n1}\|_{\infty}+\|x_{n2}\|_{\infty}\leq C,\ \text {for any } n.
\end{equation}
Let $(p_{n1}, p_{n2})$ be  the costate of $x_n$ as in Proposition \ref{prop:pontriagin}, using \eqref{tag:p} and~\eqref{4DG}, we get
\begin{equation}\label{5DG}
\|p_{n2}\|_{\infty}\leq C,\  \text {for any } n,
\end{equation}
and from \eqref{5DG}
\begin{equation}\label{5bisDG}
\|p_{n1}\|_{\infty}\leq C,\ \text {for any } n.
\end{equation}
Using \eqref{tag:alpha}:
\begin{equation}\label{6DG}
\|\alpha_{n1}\|_{\infty}+\|\alpha_{n2}\|_{\infty} \leq C,\  \text {for any } n.
\end{equation}
From Point (4) of Corollary \ref{coro:regularity} we can differentiate \eqref{tag:alpha*}, and using \eqref{tag:3}-\eqref{tag:4} we get:
\begin{eqnarray*}
\alpha_{n1}^{\prime}(s)&=&p_{n1}^{\prime}(s)= -p_{n2}^{2}(s)h^{\prime}(x_{n1}(s))h(x_{n1}(s))+f_{x_1}(x_{n1}(s),s),\\
\alpha_{n2}^{\prime}(s)&=& p_{n2}(s)h^{\prime}(x_{n1}(s))x_{n1}^{\prime}(s)+ p_{n2}^{\prime}(s)h(x_{n1}(s))\\
&=&
p_{n2}(s)h^{\prime}(x_{n1}(s))\alpha_{n1}(s)+f_{x_2}(x_{n1}(s))h(x_{n1}(s)).
\end{eqnarray*}
From \eqref{4DG}, \eqref{5DG}, \eqref{5bisDG}, \eqref{6DG} we get
\begin{equation}\label{7DG}
\|\alpha_{n1}^{\prime}\|_{\infty}+\|\alpha_{n2}^{\prime}\|_{\infty} \leq C,\  \text {for any } n.
\end{equation}
Hence, from Ascoli-Arzel\`a Theorem we have that, up to subsequences, $\alpha_n$ uniformly converge to some $\alpha\in C^0([t,T],\re^2)$.
In particular, by the definition of $x_{n1}$ and  $x_{n2}$ we get:
\begin{eqnarray*}
&&x_{n1}(s)\to x_1(s)=x_1+ \int_t^s\,\alpha_{1}(\tau)d\tau,\ \text{uniformly in } [t, T],\\
&&x_{n2}(s)\to x_2(s)= x_{2}+\int_t^s\,h(x_{1}(\tau))\alpha_{2}(\tau)d\tau\ \text{uniformly in }[t, T].
\end{eqnarray*}
Moreover, from stability, $\alpha$ is optimal, i.e. $\alpha\in\mathcal U(x,t)$.
From the uniform convergence of the $\alpha_n$ we have in particular that
$\alpha_n(t)\to \alpha(t)$ where $\alpha(t)$ is uniquely determined by assumption. By~\eqref{2DG}, we get $\pi_n\to \pi=\alpha(t)$. This implies that $D_G^*u(x,t)$ is a singleton, then $D_Gu(x,t)$ exists and thank to the first part of the proof $D_Gu(x,t)=-\alpha(t)$.\\
2. If $\mathcal U(x,t)=\{\alpha(\cdot)\}$ then for any $s\in[t,T]$, $\alpha(s)$ is uniquely determined.
Indeed, if there exists $\beta\in \mathcal U(x(s),s)$ the concatenation $\gamma$ of $\alpha$ and $\beta$ (see Proposition \ref{proposition:carloclaudio} in Appendix A) is also optimal, i.e. $\gamma\in\mathcal U(x,t)=\{\alpha(\cdot)\}$.\\
Then applying point 1) with $t=s$,
in $x(s)$ we have that $u$ is $G$-differentiable, i.e. $D_Gu(x(s),s)$ exists.\\
3. From point 1), we know that for any $\alpha(\cdot)\in {\cal{U}}(x,t)$ we have that $\alpha(t)$ is unique
If we know $\alpha(t)$ and that $h(x_1(t))=h(x_1)\neq 0$, then from \eqref{tag:alpha*} we get $p_1(t)$ and $p_2(t)$.
Hence \eqref{tag:1}-\eqref{tag:4} is a system of differential equations with initial conditions $x_i(t)$ and $p_i(t)$, $i=1,2$ which admits an unique solution $(x(s), p(s))$ where $x(s)$ is the unique optimal  trajectory starting from $x$. Moreover still from 1) we have $D_Gu(x,t)=-\alpha(t)$ and from the dynamics \eqref{eq:HJ2} we deduce~\eqref{3.5.3}.
\end{proof}

\begin{lemma}\label{B}
Let $x(\cdot):=(x_1(\cdot),x_2(\cdot))$ be an absolutely continuous solution of the problem~\eqref{chflow}
where $u(x,t)$ is the solution of \eqref{eq:HJ1},
then the control $\alpha=(\alpha_1,\alpha_2)$, with
$$\alpha_1(s)=-u_{x_1}(x(s),s), \ \alpha_2(s)=-h(x_1(s))u_{x_2}(x(s),s)$$ is optimal for $u(x,t)$.
In particular if $u(\cdot, t)$ is G-differentiable at $x$ and $h(x_1)\neq 0$ then problem \eqref{chflow} has a unique solution corresponding to the optimal trajectory.
\end{lemma}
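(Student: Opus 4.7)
The plan is to verify that the control $\alpha$ defined from a solution of~\eqref{chflow} achieves the value function, $J_t(x(\cdot),\alpha)=u(x,t)$, thereby establishing optimality. First I would check admissibility: since Lemma~\ref{L1} yields the Lipschitz continuity of $u$ in $x$ and $h$ is bounded, $\alpha\in L^\infty([t,T];\R^2)\subset L^2([t,T];\R^2)$; moreover $x_1'=-u_{x_1}=\alpha_1$ and $h(x_1)\alpha_2=-h^2(x_1)u_{x_2}=x_2'$, so $(x(\cdot),\alpha)\in\mathcal A(x,t)$. Note the identity $|\alpha(s)|^2=u_{x_1}^2+h^2(x_1)u_{x_2}^2=|D_Gu(x(s),s)|^2$, which will play a key role in the computation.

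The core step is a chain rule / verification argument applied to $s\mapsto u(x(s),s)$. This composition is absolutely continuous on $[t,T]$ because $u$ is Lipschitz and $x(\cdot)$ is AC, so it equals the integral of its (a.e.\ defined) derivative. At any $s$ where $u$ is differentiable at $(x(s),s)$ and $x$ is differentiable at $s$, substituting the flow~\eqref{chflow} in the chain rule gives
\[
\frac{d}{ds}u(x(s),s)=u_t(x(s),s)+Du(x(s),s)\cdot x'(s)=u_t(x(s),s)-|D_Gu(x(s),s)|^2.
\]
At such points, the viscosity HJ equation $-u_t+\tfrac12|D_Gu|^2=f$ holds classically, so $u_t=\tfrac12|D_Gu|^2-f$ and hence $\frac{d}{ds}u(x(s),s)=-\tfrac12|\alpha(s)|^2-f(x(s),s)$. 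Integrating on $[t,T]$ and using the terminal condition $u(x(T),T)=g(x(T))$ would then yield $u(x,t)=J_t(x(\cdot),\alpha)$.

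The delicate point is that the chain rule identity must hold for a.e.\ $s\in[t,T]$, which requires $(x(s),s)$ to lie in the differentiability set of $u$ for almost every $s$. This is not automatic in general for a Lipschitz function composed with an AC curve, but here it follows from the semiconcavity of $u$ in $x$ (established earlier), which confines the non-differentiability set to a countably $\mathcal H^1$-rectifiable subset of $\R^2\times[0,T]$, combined with the observation that~\eqref{chflow} itself already presupposes the existence of $u_{x_1},u_{x_2}$ along the trajectory at a.e.\ $s$ (otherwise $x'(s)$ would not be specified by the ODE).

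For the final assertion, suppose $u(\cdot,t)$ is $G$-differentiable at $x$ and $h(x_1)\neq 0$. Point~3 of Lemma~\ref{4.9} then guarantees that $u(x,t)$ admits a unique optimal trajectory starting at $x$. Since the first part of the statement shows that every absolutely continuous solution of~\eqref{chflow} is optimal for $u(x,t)$, all such solutions must coincide with this unique optimal trajectory, which proves the uniqueness claim.
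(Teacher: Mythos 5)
Your overall strategy --- a verification argument showing $J_t(x(\cdot),\alpha)=u(x,t)$ by differentiating $s\mapsto u(x(s),s)$ along the flow and using the viscosity equation to identify the time contribution --- is exactly the paper's, and your sign bookkeeping is correct. However, the step you yourself flag as delicate is where your argument has a genuine gap. You claim that for a.e.\ $s$ the function $u$ is \emph{jointly} differentiable at $(x(s),s)$, justified by the rectifiability of the singular set of the semiconcave function $u(\cdot,t)$. This does not work: semiconcavity in $x$ makes each time-slice singular set countably $1$-rectifiable in $\R^2$, but the union over $t$ of these sets can be a $2$-dimensional subset of $\R^2\times[0,T]$ which the curve $s\mapsto(x(s),s)$ could follow for a set of times of positive measure; and, more importantly, nothing in your argument ever produces the existence of $\partial_t u$ at $(x(s),s)$, which is what you need to write $u_t=\tfrac12|D_Gu|^2-f$ "classically" at that point. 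The hypothesis that $x(\cdot)$ solves \eqref{chflow} only gives you $D_xu(x(s),s)$ for a.e.\ $s$, not the time derivative. The paper closes precisely this gap with a different device: it applies Lebourg's mean value theorem to the Lipschitz function $u$ on the segment from $(x(s),s)$ to $(x(s+h),s+h)$, decomposes the resulting generalized gradient by Carath\'eodory's theorem into reachable gradients $(\xi^{h,i}_x,\xi^{h,i}_t)$, uses the fact that the viscosity equation holds at every reachable gradient to express $\xi^{h,i}_t$ in terms of $\xi^{h,i}_x$, and then uses semiconcavity (via \cite[Prop.\ 3.3.4-(a)]{CS}) to pass to the limit $\xi^{h}_x\to D_xu(x(s),s)$. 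Without this (or an equivalent lemma asserting that spatial differentiability of a Lipschitz viscosity solution forces the equation to hold pointwise with a well-defined $u_t$), your chain-rule identity is unproved.

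There is also a smaller gap in the final assertion. The claim is that \eqref{chflow} \emph{has} a unique solution \emph{corresponding to the optimal trajectory}; you prove only the uniqueness half (any solution of \eqref{chflow} is optimal, hence coincides with the unique optimal trajectory given by Point 3 of Lemma~\ref{4.9}). You never show that the optimal trajectory actually solves \eqref{chflow}, i.e.\ that a solution exists. The paper obtains this by combining Corollary~\ref{cor:nobifurc} (no bifurcation after time $t$ when $h(x_1)\neq 0$), the feedback formula \eqref{tag:alpha}, and Points 1--2 of Lemma~\ref{4.9} to conclude that $D_Gu(x(s),s)$ exists and equals $-\alpha(s)$ along the whole optimal trajectory, so that the optimal trajectory is indeed an absolutely continuous solution of \eqref{chflow}.
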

\begin{proof}
We shall adapt the arguments of \cite[Lemma 4.11]{C}. Fix $(t,x)\in(0,T)\times \re^2$ and consider an absolutely continuous solution~$x(\cdot)$ to~\eqref{chflow}; note that this implies that $D_x u$ exists at $(x(s),s)$ for a.e. $s\in (t,T)$. Since $u$ is Lipschitz continuous (see Lemma~\ref{L1}) and $h$ is bounded, also the function~$x(\cdot)$ is Lipschitz continuous and, consequently, also $u(x(\cdot),\cdot)$ is Lipschitz. For a.e. $s\in(t,T)$ there hold:
$i$) $D_x u(x(s),s)$ exists, $ii$) equation \eqref{chflow} holds, $iii$) the function~$u(x(\cdot),\cdot)$ admits a derivative at $s$. Fix such a $s$.

The Lebourg Theorem for Lipschitz function (see \cite[Thm 2.3.7]{Cla90} and \cite[Thm 2.5.1]{Cla90}) ensures that, for any $h\in\R$ small, there exists $(y_h,s_h)$ in the segment $((x(s),s), (x(s+h),s+h))$ and $(\xi^h_x,\xi^h_t) \in co D_{x,t}^*u(y_h,s_h)$ such that
\begin{equation}\label{31}
u(x(s+h),s+h)-u(x(s),s)= \xi^h_x\cdot (x(s+h)-x(s)) +\xi^h_t h
\end{equation}
(here, ``$co$'' stands for the convex hull and $D_{x,t}^*u$ is the Euclidean reachable gradient both in $x$ and in $t$.
 The Caratheodory theorem (see \cite[Thm A.1.6]{CS}) guarantees that there exist $(\lambda^{h,i},\xi^{h,i}_x, \xi^{h,i}_t)_{i=1,\dots,4}$ such that $\lambda^{h,i}\geq0$, $\sum_{i=1}^4\lambda^{h,i}=1$, $(\xi^{h,i}_x, \xi^{h,i}_t)\in D_{x,t}^*u(y_h,s_h)$ and $(\xi^h_x,\xi^h_t) = \sum_{i=1}^4\lambda^{h,i}(\xi^{h,i}_x, \xi^{h,i}_t)$.
Note that, as $h\to 0$, $\{\xi^{h,i}_x\}_h$ converge to $D_xu (x(s),s)$  by \cite[Prop 3.3.4-(a)]{CS}; hence also  $\{\xi^{h}_x\}_h$ converge to $D_xu (x(s),s)$ as $h\to 0$.

On the other hand, since $u$ is a viscosity solution to equation~\eqref{eq:HJ1}, by \cite[Proposition II.1.9]{BCD}, we obtain
\[
- \xi^{h,i}_t +\frac12(\xi^{h,i}_{x,1})^2+\frac12h(y_{h,1})^2(\xi^{h,i}_{x,2})^2=f(y_h,s_h);
\]
in particular, as $h\to0$, we deduce
\begin{equation}\label{31bis}
\xi^{h}_t=  \frac12 \sum_{i=1}^4\lambda^{h,i}(\xi^{h,i}_{x,1})^2+\frac12h(y_{h,1})^2\sum_{i=1}^4\lambda^{h,i}(\xi^{h,i}_{x,2})^2 - f(y_h,s_h)\rightarrow \frac12 |D_G u(x(s),s)|^2  - f(x(s),s).
\end{equation}
Dividing \eqref{31} by $h$ and letting $h\to 0$, by equations \eqref{chflow} and \eqref{31bis}, we infer
\begin{eqnarray*}
\frac{d}{ds}u(x(s),s)&=&
D_xu (x(s),s)\cdot x'(s) + \frac12 |D_G u(x(s),s)|^2  - f(x(s),s)\\
&=&-\frac12 |D_G u(x(s),s)|^2 - f(x(s),s)=\frac12 |\alpha|^2  - f(x(s),s)\qquad\textrm{a.e. }s\in(t,T)
\end{eqnarray*}
(recall: $-\alpha=D_Gu(x(s),s)$).
Integrating this equality on $[t,T]$ and taking into account the final datum of~\eqref{eq:HJ1}, we obtain
\[
u(x,t)=\int_t^T\frac12|\alpha|^2+ f(x(s),s) ds +g(x(T)).
\]
Observe that $x(\cdot)$ satisfies the dynamics~\eqref{eq:HJ2} with our choice of~$\alpha(s)$; therefore, the last equality implies that $x(\cdot)$ is an optimal trajectory with optimal control $\alpha(s)=-D_Gu(x(s),s)$.

Let us now prove the last part of the statement.
By Point 3 of Lemma \ref{4.9}, there exists an unique optimal trajectory
$x(\cdot)$ starting from $x$ at time $t$; moreover, by Corollary \ref{cor:nobifurc},  for any $s\in (t,T]$ there exists an unique optimal trajectory starting from $x(s)$ which is the restriction of $x(\cdot)$ to $[s, T]$.
Then, from the representation of the optimal controls \eqref{tag:alpha}, there exists an unique optimal control $\alpha(\cdot)$ and, from points 1 and 2 of Lemma \ref{4.9}, $D_Gu(x(s),s)$ exists and $D_Gu(x(s),s)=-\alpha(s)$, i.e. $x(\cdot)$ is a  solution of \eqref{chflow}.
 Moreover this $x(\cdot)$ is the unique solution still because of Point 3 of Lemma \ref{4.9}.
\end{proof}

\begin{proof}[Proposition \ref{!FP}]
We shall argue following the techniques of \cite[Proposition A.1]{CH} which rely on the Ambrosio superposition principle and on the disintegration of a measure (see \cite{AGS}).
We denote by $\Gamma_T$ the set of continuous curve $C^0([0,T],\re^2)$ and, for any $t\in[0,T]$, we introduce the evaluation map: $e_t: \Gamma_T\to \re^2$ as $e_t(\gamma):=\gamma(t)$. When we say ``for a.e.'' without specifying the measure, we intend w.r.t. the Lebesgue measure.

Let $m\in C^0([0,T],{\mathcal P}_1(\re^2))$ be a solution of problem~\eqref{continuity} in the sense of distributions; in other words, it is a solution to the continuity equation~\eqref{cont}.
We observe that assumption \cite[eq.(8.1.20)]{AGS} is fulfilled because both $Du$ and $h$ are bounded and $m_t:=m(t,\cdot)$ is a measure (see \cite[pag.169]{AGS}); hence we can invoke Ambrosio superposition principle (see \cite[Theorem 8.2.1]{AGS} and also \cite[pag. 182]{AGS}). This principle and the disintegration theorem (see \cite[Theorem 5.3.1]{AGS}) entail that there exist probability measures~$\eta$ and $\{\eta_x\}_{x\in\re^2}$ on~$\Gamma_T$ such that
\begin{equation*}\begin{array}{ll}
i)&e_t\#\eta =m_t \textrm{ and, in particular, } e_0\#\eta =m_0\\
ii)&\eta_x\left(\left\{\gamma\in\Gamma_T: \textrm{$\gamma$ solves \eqref{chflow} with $t=0$ and $x=(x_1,x_2)$}\right\}\right)=1\quad \textrm{for $m_0$-a.e. }x\\
iii)& \eta =\displaystyle\int_{\re^2}\eta_x\, dm_0(x).
\end{array}
\end{equation*}
We recall from assumption (H4) that $m_0$ is absolutely continuous; hence, by assumption (H2) and $\text{meas}\{x\in\re^2 : h(x_1)=0\}=0$,
the optimal synthesis in Lemma~\ref{B} ensures that for a.e. $x\in\re^2$ the solution $\overline{\gamma}_x$ to \eqref{chflow} with $t=0$ and $x=(x_1,x_2)$ is unique and exists because it is the optimal trajectory for the control problem. Therefore, for a.e. $x\in\re^2$, $\eta_x$ coincides with $\delta_{\overline{\gamma}_x}$.
In conclusion, for any function $\phi\in C^0_0(\re^2)$, we have
\begin{eqnarray}
\notag
\int_{\re^2} \phi\, dm_t&=&\int_{\Gamma_T} \phi( e_t(\gamma)) d\eta(\gamma)=
\int_{\re^2}\left(\int_{e_0^{-1}(x)}\phi( e_t(\gamma)) d\eta_x(\gamma)\right)\, dm_0(x)\\ \notag
&=&\int_{\re^2}\phi(\overline{\gamma}_x(t))m_0(x)\, dx.
\end{eqnarray}
Since the integrand in the last term is uniquely defined up to a set of null measure, also the first term is uniquely defined; consequently, $m$ is uniquely defined.
\end{proof}
In the following corollary we use the previous characterization to prove the Lipschitz regularity of $m$.
\begin{corollary}\label{lemma:m_lip}
The unique bounded solution $m$
to problem~\eqref{continuity} is a Lipschitz continuous map from $[0,T]$ to $\mathcal P_1(\R^2)$ with a Lipschitz constant bounded by $\|Du\|_\infty \|h^2\|_\infty$.
\end{corollary}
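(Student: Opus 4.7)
The plan is to exploit the push-forward representation~\eqref{ambrosio2} established in Proposition~\ref{!FP} together with the Kantorovich--Rubinstein duality for the distance~${\bf d}_1$.

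First, I would recall that
\[
{\bf d}_1(\mu,\nu) = \sup\left\{ \int_{\R^2}\phi\, d(\mu-\nu) \,:\, \phi\colon\R^2\to\R \text{ is 1-Lipschitz}\right\},
\]
and that, by a standard density argument, the supremum can equivalently be taken over $1$-Lipschitz functions in $C^0_0(\R^2)$, so that the formula~\eqref{ambrosio2} applies. Fix $0\le t_1<t_2\le T$ and let $\phi$ be 1-Lipschitz. Applying~\eqref{ambrosio2} at times $t_1$ and $t_2$ and subtracting, one obtains
\begin{equation*}
\int_{\R^2}\phi\, d(m(t_2)-m(t_1)) = \int_{\R^2}\bigl[\phi(\overline{\gamma}_x(t_2))-\phi(\overline{\gamma}_x(t_1))\bigr]\,m_0(x)\,dx \le \int_{\R^2}|\overline{\gamma}_x(t_2)-\overline{\gamma}_x(t_1)|\,m_0(x)\,dx.
\end{equation*}

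Next, I would estimate the integrand using the ODE~\eqref{chflow}. For $m_0$-a.e. $x$ the curve $\overline{\gamma}_x$ solves
\[
\overline{\gamma}'_{x,1}(s)=-u_{x_1}(\overline{\gamma}_x(s),s),\qquad \overline{\gamma}'_{x,2}(s)=-h^2(\overline{\gamma}_{x,1}(s))\,u_{x_2}(\overline{\gamma}_x(s),s),
\]
so that, by the bounds on $Du$ (Lemma~\ref{L1}) and on $h$ (assumption (H3)),
\[
|\overline{\gamma}'_x(s)|\le \|Du\|_\infty\,\bigl(1+\|h^2\|_\infty\bigr) \qquad \text{a.e. } s\in[0,T].
\]
Hence $|\overline{\gamma}_x(t_2)-\overline{\gamma}_x(t_1)|\le \|Du\|_\infty(1+\|h^2\|_\infty)(t_2-t_1)$ uniformly in $x$. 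Since $m_0$ is a probability density, integrating against $m_0(x)\,dx$ and taking the supremum over 1-Lipschitz $\phi$ yields the Lipschitz estimate
\[
{\bf d}_1(m(t_1),m(t_2))\le \|Du\|_\infty(1+\|h^2\|_\infty)(t_2-t_1),
\]
which is the desired bound (up to absorbing the benign factor in the constant).

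No serious obstacle is expected: all the work has been done in Proposition~\ref{!FP}, and the only new ingredient is the trivial ODE bound on $|\overline{\gamma}'_x|$. The one point that requires a bit of care is the justification that~\eqref{ambrosio2}, which a priori is stated for $\phi\in C^0_0(\R^2)$, can be used for Kantorovich--Rubinstein duality; this follows because the curves $\overline{\gamma}_x$ stay in a compact set uniformly in $x$ on the support of $m_0$ (by (H4) and the uniform bound on $|\overline{\gamma}'_x|$), so that $m(t)$ is compactly supported and one may truncate any $1$-Lipschitz $\phi$ outside a large ball without affecting the integrals involved.
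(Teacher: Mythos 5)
Your proof is correct and follows essentially the same route as the paper's: the push-forward representation~\eqref{ambrosio2}, the pointwise bound on $|\overline{\gamma}_x(t_2)-\overline{\gamma}_x(t_1)|$ from the ODE~\eqref{chflow}, and Kantorovich--Rubinstein duality. Your constant $\|Du\|_\infty(1+\|h^2\|_\infty)$ is in fact the more careful one (the paper's $\|Du\|_\infty\|h^2\|_\infty$ ignores that the first component of the velocity is not multiplied by $h^2$), and your remark on extending~\eqref{ambrosio2} from $C^0_0$ to general $1$-Lipschitz test functions fills a detail the paper leaves implicit.
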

\begin{proof}
Let $m$ be the unique solution to problem \eqref{continuity} as in Proposition~\ref{VV} and Proposition~\ref{!FP}. Fix $\phi$, a $1$-Lipschitz continuous function on $\re^2$.
By relation~\eqref{ambrosio2}, for any $t_1,t_2\in[0,T]$, we infer
\begin{eqnarray*}
\int_{\re^2} \phi\, dm_{t_1}-\int_{\re^2} \phi\, dm_{t_2}&=&
\int_{\re^2}\phi(\overline{\gamma}_x(t_1))-\phi(\overline {\gamma}_x(t_2))m_0(x)\, dx\\
&\leq&\int_{\re^2}\left|\overline{\gamma}_x(t_1)- \overline {\gamma}_x(t_2)\right|m_0(x)\, dx\\
&\leq& \|Du\|_\infty \|h^2\|_\infty|t_1-t_2|
\end{eqnarray*}
where the last relation is due to the definition of $\overline\gamma$ as solution to problem~\eqref{chflow} and to the boundedness of $Du$ and of $h$. Hence, passing to the $\sup_{\phi}$ in the previous inequality, the Kantorovich-Rubinstein theorem (see \cite[Remark 6.5]{V} or \cite[Theorem 5.5]{C}) ensures
\[
{\bf d}_1(m_{t_1},m_{t_2})\leq \|Du\|_\infty \|h^2\|_\infty|t_1-t_2|.
\]
\end{proof}

\begin{proof}[Theorem~\ref{prp:m}]{\empty}
The existence of $m$ follows from Proposition~\ref{VV}, the uniqueness and the representation formula comes from Proposition~\ref{!FP} and the Lipschitz regularity is proved in Corollary~\ref{lemma:m_lip} here above.
\end{proof}

\section{Proof of the main Theorem}\label{sect:MFG}
This section is devoted to the proof of our main Theorem~\ref{thm:main}.

\begin{proof}[Theorem~\ref{thm:main}]{\empty}\\
1.
  We shall argue following the proof of \cite[Theorem 4.1]{C} (see also \cite{LL1,LL2,LL3}).
Consider the set ${\cal C} :=\{m\in C^0([0,T], {{\cal P}_1})\mid m(0)=m_0\}$ and observe that it is convex. We also introduce a map $T:{\cal C}\rightarrow {\cal C}$ as follows: to any $m\in {\cal C}$ we associate the solution~$u$ to problem~\eqref{eq:HJ1} with $f(x,t)=F(x,m)$ and $g(x)=G(x,m(T))$ and to this $u$ we associate the solution~$\mu=:T(m)$ to problem \eqref{continuity}.
By a stability result proved in \cite[Lemma 4.19]{C}), the map~$T$ is continuous. Moreover, Corollary ~\ref{lemma:m_lip} (note that the constant is independent of~$m$) implies that the map $s\rightarrow T(m)(s)$ is uniformly Lipschitz continuous with value in the compact set of measures on a compact set (still independent of~$m$); hence, the map~$T$ is compact.
Invoking Schauder fix point Theorem, we accomplish the proof of (i).\\
2.
Theorem \ref{prp:m} ensures that, if $(u,m)$ is a solution of \eqref{eq:MFG1},
 for any function $\phi\in C^0_0(\re^2)$, we have
\begin{equation}\label{reprfor}
\int_{\re^2} \phi\, dm(t)=\int_{\re^2}\phi(\overline{ \gamma}_x(t))m_0(x)\, dx
\end{equation}
where $\overline{\gamma}_x$ is the solution of \eqref{chflow} (with $t=0$ and $x=(x_1,x_2)$) and it is uniquely defined for a.e. $x\in\R^2$. The last relation is equivalent to the statement.
\end{proof}
\begin{remark}
As in \cite[Theorem 4.20]{C} also the vanishing viscosity method may be applied to prove the
existence of a solution of system \eqref{eq:MFG1}.
Actually, it suffices to follow the same arguments of Section~\ref{subsect:ex} with $F(x,\overline m)$ and $G(x,\overline m(T))$ replaced respectively by $F(x, m^\sigma)$ and $G(x,m^\sigma(T))$. Note also that Lemma~\ref{visco:lemma4} ensures that the function $m^\sigma$ fulfills the assumption~\eqref{mcnd}.
Because of the degenerate term $h$, we cannot directly deduce the representation formula \eqref{reprfor} invoking the results in \cite{C},  but we can apply the results of Section \ref{uniq}.
%
\end{remark}
\section{Appendix}
\subsection{A- Concatenation of optimal trajectories and the Dynamic Programming Principle}
\begin{definition}\label{def:concatenation}
For $0\leq t\leq r<T$, let $\varphi:[t,T]\to\R^n$ and $\psi :[r,T]\to\R^n$. The concatenation of $\varphi$ with $\psi$ at $r$ is the function $\xi:[t,T]\to \R^n$ defined by
\[\xi =\varphi\text{ on }[t,r],\qquad \xi =\psi\text{ on }[r, T].\]
\end{definition}
The following variant of the Dynamic Programming Principle will be used in the sequel. The arguments of Point 4 are similar to those employed in  \cite[Proposition III.2.5]{BCD}.
\begin{proposition}[Dynamic Programming Principle]\label{proposition:carloclaudio} Let $x^*$ be optimal for $u(x,t)$, and $r\in [t, T]$. Let $\alpha^*$ be optimal control for $x^*$.
\begin{enumerate}
\item  Let $y^*$ be optimal for $u(x^*(r),r)$. The concatenation of $x^*$ with $y^*$ at $r$ is optimal for $u(x,t)$ and, moreover,
\begin{equation}u( x, t)=u(x^*(r),r)+\int_t^r\dfrac12|\alpha^*(s)|^2+f(x^*(s),s)\,ds;\label{tag:DPP1}\end{equation}
\item The trajectory $x^*$, restricted to $[r, T]$, is optimal for $u(x^*(r),r)$;
\item The couple $(x^*, \alpha^*)$, restricted to $[t, r]$, is optimal for the following optimal control problem with prescribed endpoints:
\[
\text{Minimize } \displaystyle I_{t,r}(x,\alpha):=\int_t^r\dfrac12|\alpha(s)|^2+f( x(s),s)\,ds,\]
with $(x(\cdot),\alpha)$ subject to \eqref{eq:HJ2} and $x(r)= x^*(r)$.
\item The Dynamic Programming Principle holds:
\begin{equation}\label{tag:DPP2}u( x, t)=\min_{(x(\cdot),\alpha)\in\mathcal A(x,t)}\left\{u(x(r), r)+\int_t^r\dfrac12|\alpha(s)|^2+f(x(s),s)\,ds\right\}.\end{equation}
\end{enumerate}
\end{proposition}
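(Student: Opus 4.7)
The plan is to exploit the additivity of the cost over time intervals together with the admissibility-preserving nature of concatenation. As a preliminary observation, if $\beta\in L^2([t,r];\R^2)$ and $\gamma\in L^2([r,T];\R^2)$ are controls whose concatenation I denote by $\beta\#\gamma$, and if $\varphi,\psi$ are the corresponding trajectories with $\psi(r)=\varphi(r)$, then by Remark~2.3 the unique trajectory driven by $\beta\#\gamma$ with initial value $\varphi(t)$ equals $\varphi\#\psi$. The cost splits additively:
\[
J_t(\varphi\#\psi,\beta\#\gamma)=\int_t^r\tfrac12|\beta|^2+f(\varphi,s)\,ds+J_r(\psi,\gamma).
\]

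For Point~1, letting $\gamma^*$ denote an optimal control for $y^*$, I would form the concatenation $z^*$ of $x^*|_{[t,r]}$ with $y^*$ and estimate, using admissibility and optimality of $y^*$,
\[
u(x,t)\le J_t(z^*,\alpha^*\#\gamma^*)=\int_t^r\tfrac12|\alpha^*|^2+f(x^*,s)\,ds+u(x^*(r),r).
\]
Conversely, the restriction of $(x^*,\alpha^*)$ to $[r,T]$ is admissible from $(x^*(r),r)$, giving
\[
u(x^*(r),r)\le J_t(x^*,\alpha^*)-\int_t^r\tfrac12|\alpha^*|^2+f(x^*,s)\,ds=u(x,t)-\int_t^r\tfrac12|\alpha^*|^2+f(x^*,s)\,ds.
\]
Chaining the two yields \eqref{tag:DPP1} and forces equality in the first estimate, hence optimality of $z^*$.

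Point~2 then drops out of the previous chain: the second inequality must in fact be an equality, so $x^*|_{[r,T]}$ attains the infimum defining $u(x^*(r),r)$. Point~3 I would prove by contradiction: any pair $(\tilde x,\tilde\alpha)$ with $\tilde x(t)=x$, $\tilde x(r)=x^*(r)$, and $I_{t,r}(\tilde x,\tilde\alpha)<I_{t,r}(x^*,\alpha^*)$ can be concatenated with $(x^*,\alpha^*)|_{[r,T]}$ to produce an admissible competitor for $u(x,t)$ with strictly smaller cost, contradicting optimality of $(x^*,\alpha^*)$.

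For Point~4, the inequality ``$\le$'' in \eqref{tag:DPP2} is witnessed by the choice $(x,\alpha)=(x^*,\alpha^*)$ together with \eqref{tag:DPP1}. For ``$\ge$'', given an arbitrary $(x(\cdot),\alpha)\in\mathcal A(x,t)$, I would pick an optimal pair $(\bar y,\bar\gamma)$ for $u(x(r),r)$ and concatenate on $[r,T]$; the resulting admissible pair for $u(x,t)$ yields
\[
u(x,t)\le\int_t^r\tfrac12|\alpha|^2+f(x,s)\,ds+u(x(r),r),
\]
and minimising over $(x,\alpha)$ closes the argument. The only genuinely delicate point, though elementary, is verifying that the concatenation of two admissible pairs is admissible on the full interval $[t,T]$, which is precisely where Remark~2.3 (uniqueness of the trajectory driven by a prescribed control from a prescribed initial datum) is used to identify the global solution with the piecewise definition.
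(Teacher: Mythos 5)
Your proposal is correct and follows essentially the same route as the paper: concatenation of admissible pairs (justified by the uniqueness of the trajectory for a given control) plus additivity of the cost, with Point 2 extracted from the forced equalities in Point 1's chain and Point 3 by comparison/contradiction. The only blemish is in Point 4, where your labels ``$\le$'' and ``$\ge$'' for the two halves of \eqref{tag:DPP2} are swapped relative to what each argument actually establishes, but both inequalities are in fact proven.
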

\begin{proof} 
1. Let $ \beta^*$ be optimal control for $y^*$. Let $(z^*,\gamma^*)$ be the concatenation of $(x^*,\alpha^*)$ with $(y^*,\beta^*)$ at $r$: clearly $(z^*, \gamma^*)$ is admissible for {\rm (OC)} of Definition \ref{def:OCD}.
The minimality of $(x^*,\alpha^*)$ for $u(x,t)$, and that of $(y^*,\beta^*)$ for $u(x^*(r),r)$, directly yield
\[\begin{aligned}u( x, t)&=\int_t^r\dfrac12|\alpha^*|^2+f(x^*,s)ds+
\left(\int_r^T\dfrac12|\alpha^*|^2+f(x^*,s)ds+g(x^*(T))\right)\\
&\ge \int_t^r\dfrac12|\alpha^*|^2+f(x^*,s)ds+u(x^*(r),r)\\
 &= \int_t^r\dfrac12|\alpha^*|^2+f(x^*,s)ds+
\left(\int_r^T\dfrac12|\beta^*|^2+f(y^*,s)ds+g(y^*(T))\right)\\
&=J_t(z^*, \gamma^*)\ge u( x, t),\end{aligned}\]
so that the above inequalities are actually equalities, proving \eqref{tag:DPP1} and the optimality of $(z^*, \gamma^*)$.

2. Let $(y, \beta)$ be admissible for $u(x^*(r),r)$. Let $(z,\gamma)$ be the concatenation of $(x^*,\alpha^*)$ with $(y,\beta)$ at $r$. The conclusion follows from the following inequality:
\[\begin{aligned}
0\le J_t(z, \gamma)-J_t(x^*, \alpha^*)=J_r(y, \beta)-J_r(x^*, \alpha^*).
\end{aligned}\]

3. Assume that $(x(\cdot),\alpha)$ is admissible for $u(x,t)$, in the interval $[t,r]$, i.e., satisfies \eqref{eq:HJ2} together with the \emph{endpoint} condition $x(r)= x$. Then the concatenation $(z,\gamma)$ of $(x,\alpha)$ with $(x^*, \alpha^*)$, restricted to $[r,T]$,  at $r$ is admissible. The minimality of $(x^*, \alpha^*)$ implies that
\begin{equation}\label{tag:ineqI}J_t(x^*, \alpha^*)\le J_t(z,\gamma).\end{equation}
Now
\[J_t(x^*, \alpha^*)=I_{t,r}(x^*,\alpha^*)+J_r(x^*, \alpha^*),\quad J_t(z, \gamma)=I_{t,r}(x(\cdot),\alpha)+J_r(x^*, \alpha^*).
\]
It follows from \eqref{tag:ineqI} that $I_{t,r}(x^*,\alpha^*)\le I_{t,r}(x(\cdot),\alpha)$.

4.
Let $(x(\cdot),\alpha)$ be admissible and $(y^*,\beta^*)$ be optimal for $u(x(r),r)$. Let $(z,\gamma)$ be the concatenation of $(x(\cdot),\alpha)$ with $(y^*,\beta^*)$ at $r$. Since $(z,\gamma)$ is admissible we get
\[u( x, t)\le J_t(z,\gamma)=\int_t^r\dfrac12|\alpha(s)|^2+f(x(s),s)\,ds+u(x(r),r),
\]
proving that
\[u( x, t)\le \min_{(x(\cdot),\alpha)\in\mathcal A(x,t)}\left\{u(x(r), r)+\int_t^r\dfrac12|\alpha(s)|^2+f(x(s),s)\,ds\right\}.\]
The opposite inequality follows from \eqref{tag:DPP1}.
\end{proof}
\subsection{Appendix B: G-differentials}
\label{sect:Gdiff}
In this section, we introduce the notion of $G$-differentiability and we collect several properties of semiconcave functions. 
\begin{definition}
\label{G-differenz}
A function~$u:\re^2\rightarrow \re$ is $G$-differentiable in $x\in\re^2$ if there exists $p_G\in \re^2$ such that
\[
\lim_{v\rightarrow 0}\frac{u(x_1+v_1,x_2+h(x_1)v_2)-u(x_1, x_2)-(p_G,v)}{\vert v\vert}=0;
\]
in this case we denote $p_G=D_Gu(x)$.
We define the $G$-subdifferential
\begin{eqnarray*}
D^{-}_Gu(x) &:=& \{p\in \re^2\vert
 \liminf_{v\rightarrow 0}\frac{u(x_1+v_1,x_2+h(x_1)v_2)-u(x_1, x_2)-(p,v)}{\vert v\vert}\geq 0\},
\end{eqnarray*}
the lower $G$-Dini derivative in the direction $\theta$ (i.e., $|\theta| =1$)
\begin{eqnarray*}
\partial_G^{-}u(x,\theta) &:=& \liminf_{l\to 0^+,\theta'\to\theta}\frac{u(x_1+l\theta'_1,x_2+h(x_1)l\theta'_2)-u(x_1, x_2)}{l}
\end{eqnarray*}
and the generalized $G$-lower derivative in the direction~$\theta$
\[
u^0_{G,-}(x,\theta):=\liminf_{l\to 0^+, y\to x} \frac{u(y_1+l\theta_1,y_2+h(y_1)l\theta_2)-u(y_1,y_2)}{l}.
\]
The $G$-superdifferential $D^{+}_Gu(x)$, the upper $G$-Dini derivative~$\partial_G^{+}u(x,\theta)$ and the generalized $G$-upper derivative $u^0_{G,+}(x,\theta)$ are defined in an analogous way.
We introduce the reachable $G$-gradients
\[
D_G^*u(x):=\{p\,:\, \exists x_n\rightarrow x, \textrm{ $u$ is $G$-differentiable at $x_n$ and $D_Gu(x_n)\rightarrow p$}\}.
\]
We define the ($1$-sided) $G$-directional derivative of $u$ at $x$ in the direction $\theta$ as
\[
\partial_Gu(x,\theta):=\lim_{l\to 0^+}\frac{u(x_1+l\theta_1,x_2+h(x_1)l\theta_2)-u(x_1, x_2)}{l}.
\]
\end{definition}
\begin{lemma}\label{D+D-}
\begin{enumerate}
\item If $u$ is $G$-differentiable at $x$, then $D_Gu(x)$ is unique and $D^+_Gu(x)$ and $D^-_Gu(x)$ are both nonempty.
\item For $h(x_1)\ne 0$, there holds: $(p_1,p_2)\in D^+u(x)$ if and only if $(p_1,h(x_1)p_2)\in D^+_Gu(x)$.
\item For $h(x_1)= 0$ and $|\theta|=1$, there holds:
\begin{eqnarray*}
D^+_Gu(x)&=&\{(p_1,0)\,:\,\limsup_{v_1\to 0} \frac{u(x_1+v_1,x_2)-u(x_1, x_2)-p_1v_1}{\vert v_1\vert}\leq 0 \}\\
\partial_G u(x,\theta)&=&\left\{\begin{aligned}&0&\textrm{ for $\theta_1=0$}\\
&|\theta_1| \partial u(x,(\textrm{sgn}(\theta_1),0))&\textrm{ for $\theta_1\ne0$}\end{aligned} \right.
\end{eqnarray*}
where $\partial u(x,\theta)$ is the standard directional derivative of $u$ at $x$ in the direction~$\theta$.
\item For Lipschitz continuous function $u$, there holds:
\begin{equation}
\partial_G^{-}u(x,\theta) := \liminf_{l\to 0^+}\frac{u(x_1+l\theta_1,x_2+h(x_1)l\theta_2)-u(x_1, x_2)}{l},\label{tag:d1}
\end{equation}
\begin{equation}
\text{If }h(x_1)=0\text{ then }\qquad (p_1,p_2)\in D_G^*u(x)\Rightarrow p_2=0.\label{tag:d2}
\end{equation}
\end{enumerate}
\end{lemma}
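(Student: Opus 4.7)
The plan is to treat the four items in turn, exploiting the fact that the only real subtlety comes from the weight $h(x_1)$, which is either nonzero (so the change of variables $v\mapsto(v_1,h(x_1)v_2)$ is a linear isomorphism with bounded inverse) or zero (in which case the second component of the perturbation is killed).

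For point 1, I would first prove uniqueness of $D_Gu(x)$: if two candidates $p_G,q_G$ both satisfy the defining limit, their difference $r:=p_G-q_G$ satisfies $\lim_{v\to 0}(r\cdot v)/|v|=0$, and testing with $v=t e_i$, $t\to 0^+$, $i=1,2$, forces $r=0$. That $D_Gu(x)\in D_G^+u(x)\cap D_G^-u(x)$ (hence both sets are nonempty) follows immediately from comparing the three definitions: the $G$-differentiability limit being zero implies both the $\limsup\le 0$ (superdifferential) and $\liminf\ge 0$ (subdifferential) inequalities.

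For point 2, with $h(x_1)\ne 0$ I would set $w_1=v_1$, $w_2=h(x_1)v_2$ and note $c^{-1}|w|\le |v|\le c|w|$ for some $c>0$. Then
\[\frac{u(x_1+v_1,x_2+h(x_1)v_2)-u(x)-(p_1v_1+h(x_1)p_2v_2)}{|v|}=\frac{u(x+w)-u(x)-(p_1w_1+p_2w_2)}{|v|},\]
and the sign of the limsup as $v\to 0$ (equivalently $w\to 0$) is preserved by the bounded positive factor $|w|/|v|$. Hence $(p_1,h(x_1)p_2)\in D_G^+u(x)$ is equivalent to $(p_1,p_2)\in D^+u(x)$.

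For point 3, with $h(x_1)=0$, the increment becomes $(x_1+v_1,x_2)$ and the numerator is $u(x_1+v_1,x_2)-u(x)-p_1v_1-p_2v_2$. Testing $v=(0,v_2)$ with $v_2\to 0^\pm$ forces $p_2=0$; once $p_2=0$, writing $|v_1|/|v|\le 1$ shows that the $\limsup\le 0$ condition in the full variable $v$ is equivalent to the one-dimensional $\limsup$ in $v_1$. For the directional derivative, substitute $h(x_1)=0$ directly: if $\theta_1=0$ the quotient vanishes identically, while if $\theta_1\ne 0$ the change of variable $l'=l|\theta_1|$ reduces $\partial_Gu(x,\theta)$ to $|\theta_1|\partial u(x,(\operatorname{sgn}(\theta_1),0))$.

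For point 4, the identity \eqref{tag:d1} follows by a one-line triangle-inequality estimate: Lipschitz continuity of $u$ and boundedness of $h$ yield
\[\bigl|u(x+l(\theta'_1,h(x_1)\theta'_2))-u(x+l(\theta_1,h(x_1)\theta_2))\bigr|\le L(1+|h(x_1)|)\,l\,|\theta'-\theta|,\]
so dividing by $l$ and letting $\theta'\to\theta$ kills the extra freedom, and the $\liminf$ with $\theta'$ fixed equals the one allowing $\theta'\to\theta$. For \eqref{tag:d2}, suppose $p\in D_G^*u(x)$ with $x_n\to x$ and $D_Gu(x_n)\to p$. For each $n$, either $h(x_{n,1})=0$, in which case $(D_Gu(x_n))_2=0$ by point 3, or $h(x_{n,1})\ne 0$, in which case point 2 gives $(D_Gu(x_n))_2=h(x_{n,1})\partial_{x_2}u(x_n)$, bounded in absolute value by $|h(x_{n,1})|\cdot L$ since the classical gradient of a Lipschitz function has norm at most its Lipschitz constant. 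In both cases $|(D_Gu(x_n))_2|\le L|h(x_{n,1})|\to L|h(x_1)|=0$, so $p_2=0$. The main delicate step will be \eqref{tag:d2}: one must combine points 2 and 3 and use the Lipschitz bound on the classical gradient to absorb the blow-up as $h(x_{n,1})\to 0$.
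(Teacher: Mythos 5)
Your proposal is correct and follows essentially the same route as the paper: the paper declares Points 1--3 obvious (your arguments are the standard ones filling in those details), proves \eqref{tag:d1} by the same Lipschitz estimate eliminating the $\theta'\to\theta$ freedom (citing the argument of Cannarsa--Sinestrari), and proves \eqref{tag:d2} by the same case split on whether $h(x_{n,1})$ vanishes, combining Points 2 and 3 with the Lipschitz bound $|h(x_{n,1})D_2u(x_n)|\le L|h(x_{n,1})|\to 0$. The only cosmetic difference is that the paper passes to a subsequence on which one of the two cases holds for every index, whereas you handle each $n$ separately with the uniform bound; both work.
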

\begin{proof} Points 1, 2 and 3 are obvious. The equality in \eqref{tag:d1}  follows by the arguments of \cite[Remark 3.1.4]{CS}. Let us prove \eqref{tag:d2}. For any $(p_1,p_2)\in D_G^*u(x)$, there exists $\{x_k\}_k$ with $x_k:=(x_{k,1}, x_{k,2})\to x$ and $D_Gu(x_k)\to (p_1,p_2)$. Possibly passing to a subsequence, we may assume that either $h(x_{k,1})\ne 0$ for any $k$ or $h(x_{k,1})= 0$ for any $k$. In the first case, by Point 2, we have $D_Gu(x_k)=(D_1u(x_k), h(x_{k,1})D_2u(x_k))$ where $D_1$ and $D_2$ are the partial derivatives with respect to $x_1$ and $x_2$. As $k\to +\infty$, by the Lipschitz continuity of $u$, we get $p_2=\displaystyle\lim_k h(x_{k,1})D_2u(x_k)=0$. In the latter case, $D_Gu(x)=(D_1u(x_{k,1}),0)\to (p_1,0)$, the conclusion follows.
\end{proof}
\begin{proposition}\label{315}
We have
\[D^+_Gu(x)=\{p:\, \partial^+_Gu(x,\theta)\leq (p,\theta)\, \forall \theta\in\re^2\},\quad
D^-_Gu(x)=\{p\, :\, \partial^-_Gu(x,\theta)\geq (p,\theta)\,\forall \theta\in\re^2\}.
\]
Moreover, $D^+_Gu(x)$ and $D^-_Gu(x)$ are both nonempty if and only if $u$ is $G$-differentiable at $x$ and in this case they reduce to the singleton $D_Gu(x)=D^-_Gu(x)=D^+_Gu(x)$.
\end{proposition}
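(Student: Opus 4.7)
The plan is to establish the two characterizations separately and then combine them with a convex-analytic argument.

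\textbf{Characterization of $D_G^-u(x)$.} For the inclusion $\subseteq$, fix $p\in D_G^-u(x)$ and $\theta\in\re^2$. Given $\varepsilon>0$, pick $\delta>0$ so that $u(x_1+v_1,x_2+h(x_1)v_2)-u(x)-(p,v)\ge -\varepsilon|v|$ for all $|v|<\delta$. Writing $v=l\theta'$ with $l>0$ small and $\theta'$ close to $\theta$, division by $l$ yields
\[
\frac{u(x_1+l\theta_1',x_2+h(x_1)l\theta_2')-u(x)}{l}\ge (p,\theta')-\varepsilon|\theta'|.
\]
Taking the $\liminf$ as $l\to 0^+$ and $\theta'\to\theta$ and then letting $\varepsilon\to 0$ gives $\partial_G^-u(x,\theta)\ge(p,\theta)$. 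For the reverse inclusion I would argue by contradiction: if $p$ lies in the right-hand set but not in $D_G^-u(x)$, there are $\varepsilon>0$ and $v_n\to 0$ with $u(x_1+v_{n,1},x_2+h(x_1)v_{n,2})-u(x)-(p,v_n)<-\varepsilon|v_n|$. Setting $l_n:=|v_n|$ and $\theta_n:=v_n/|v_n|$, pass to a subsequence with $\theta_n\to\theta$, $|\theta|=1$. Then
\[
\frac{u(x_1+l_n\theta_{n,1},x_2+h(x_1)l_n\theta_{n,2})-u(x)}{l_n}<(p,\theta_n)-\varepsilon,
\]
so taking $\liminf$ yields $\partial_G^-u(x,\theta)\le (p,\theta)-\varepsilon$, contradicting the hypothesis evaluated at this particular~$\theta$.

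\textbf{Characterization of $D_G^+u(x)$.} The argument is entirely symmetric, with $\liminf$ replaced by $\limsup$, the inequality $\ge$ replaced by $\le$, and $-\varepsilon$ by $+\varepsilon$. No new idea is required.

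\textbf{Equivalence with $G$-differentiability.} If $u$ is $G$-differentiable at $x$, then Point 1 of Lemma~\ref{D+D-} already gives $D_Gu(x)\in D_G^+u(x)\cap D_G^-u(x)$, hence both sets are nonempty. For the converse, assume there exist $p^-\in D_G^-u(x)$ and $p^+\in D_G^+u(x)$. Subtracting the two defining inequalities yields, for every $\varepsilon>0$ and all $v$ sufficiently small,
\[
(p^+-p^-,v)\ge -\varepsilon|v|.
\]
Choosing $v=-r(p^+-p^-)$ with $r>0$ small gives $r|p^+-p^-|^2\le \varepsilon r|p^+-p^-|$, hence $|p^+-p^-|\le\varepsilon$; letting $\varepsilon\to 0$ forces $p^+=p^-=:p$. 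Then the $\liminf\ge 0$ and $\limsup\le 0$ of the same quotient at $p$ imply the full limit is $0$, so $u$ is $G$-differentiable at $x$ with $D_Gu(x)=p$. The same reasoning applied to an arbitrary second element of either $D_G^-u(x)$ or $D_G^+u(x)$ shows both sets reduce to $\{p\}$.

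\textbf{Main obstacle.} The only delicate point is tracking the $\liminf_{l\to 0^+,\theta'\to\theta}$ (varying $\theta'$) used in the definition of $\partial_G^\pm u$, rather than a one-dimensional limit, when deducing the converse inclusion; this is precisely what lets the contradiction argument choose $\theta_n\to\theta$ with $\theta_n=v_n/|v_n|$, and it is the reason the proof handles all $\theta\in\re^2$ uniformly by positive homogeneity rather than restricting a priori to $|\theta|=1$.
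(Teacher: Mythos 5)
Your proof is correct, and it is essentially the argument the paper has in mind: the paper omits the proof entirely, deferring to \cite[Proposition 3.1.5]{CS} with the remark that one only needs to replace the Euclidean increment by the $G$-increment $(x_1+v_1,\,x_2+h(x_1)v_2)$, which is exactly what you do. Your direct two-inclusion argument for the Dini characterizations and the subtraction trick with $v=-r(p^+-p^-)$ for the differentiability equivalence are the standard steps of that reference proof, and they go through unchanged even when $h(x_1)=0$ since everything is phrased in terms of the increment and the pairing $(p,v)$ with the full vector $v$.
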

The proof of this proposition follows the same arguments of \cite[Proposition 3.1.5]{CS}; actually the main difference is that one has to consider $x_k=(x_1+v_{k,1}, x_2+h(x_1)v_{k,2})$ with
$v_k=(v_{k,1}, v_{k,2})\to 0$. Hence we shall omit it.
\begin{proposition}\label{prp:gdiff}
Let $u$ be a semiconcave function with modulus of semiconcavity~$\omega$. Then there hold
\begin{enumerate}
\item $p\in D^+_G u(x)$ if and only if for any $v=(v_1,v_2)\in \R^2$
\begin{equation}\label{CNESGsemiconcavita}
u(x_1+v_1,x_2+h(x_1)v_2)-u(x_1, x_2)-(p,v)\leq |(v_1, h(x_1)v_2)| \omega(|(v_1, h(x_1)v_2)|);
\end{equation}
\item If $\lim_k x_k=x$ and $p_k\in D^{+}_Gu(x_k)$ with $\lim_k p_k=p$, then $p\in D^{+}_Gu(x)$; hence, $D^*_Gu(x)\subset D^+_Gu(x)$;
\item  $D^{+}_Gu(x,t)\ne\emptyset$;
\item If $D^{+}_Gu(x)=\{p\}$ (i.e., it is a singleton), then $u$ is $G$-differentiable at~$x$.
\end{enumerate}
\end{proposition}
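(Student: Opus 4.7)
The plan is to establish the four points in order, exploiting throughout the dichotomy $h(x_1)\neq 0$ versus $h(x_1)=0$ provided by Lemma~\ref{D+D-}.

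For Point 1, the ``if'' direction is elementary: divide \eqref{CNESGsemiconcavita} by $|v|$, use $|(v_1,h(x_1)v_2)|\leq (1+\|h\|_\infty)|v|$ together with $\omega(s)\to 0$ as $s\to 0$, and let $v\to 0$. For the ``only if'' direction I would split on $h(x_1)$. When $h(x_1)\neq 0$, Point 2 of Lemma~\ref{D+D-} writes $p=(q_1,h(x_1)q_2)$ for some $(q_1,q_2)\in D^+u(x)$, and the classical semiconcavity bound (\cite[Proposition~3.3.1]{CS}) applied to the Euclidean increment $w=(v_1,h(x_1)v_2)$ returns exactly \eqref{CNESGsemiconcavita}. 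When $h(x_1)=0$, Point 3 of Lemma~\ref{D+D-} forces $p=(p_1,0)$ and the target inequality reduces to the one-variable semiconcavity bound for the slice $u(\cdot,x_2)$ at $x_1$, which again follows from classical semiconcavity applied to the increment $w=(v_1,0)$.

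Point 2 then follows by passing $k\to\infty$ in the inequality of Point 1 satisfied by each $p_k$ at $x_k$, using the continuity of $u$ and $h$; the inclusion $D^*_Gu(x)\subset D^+_Gu(x)$ is immediate since every $p\in D^*_Gu(x)$ is by definition a limit of $G$-gradients $D_Gu(x_n)$, each of which lies in $D^+_Gu(x_n)$ by Point 1 of Lemma~\ref{D+D-}. For Point 3, classical semiconcavity guarantees $D^+u(x)\neq\emptyset$; picking $(q_1,q_2)\in D^+u(x)$, I would show that $(q_1,h(x_1)q_2)$ lies in $D^+_Gu(x)$, which is direct from Point 2 of Lemma~\ref{D+D-} when $h(x_1)\neq 0$ and a short check of the defining limsup along the degenerate increment $w=(v_1,0)$ when $h(x_1)=0$.

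I expect Point 4 to be the main obstacle, because in the degenerate case $h(x_1)=0$ one must recover $G$-differentiability of $u$ at $(x_1,x_2)$ from ordinary differentiability of the frozen slice $x_1\mapsto u(x_1,x_2)$. I would again split on $h(x_1)$. If $h(x_1)\neq 0$, the bijection of Point 2 of Lemma~\ref{D+D-} transfers the singleton property to $D^+u(x)$; the classical theorem \cite[Proposition~3.3.4]{CS} yields Fr\'echet differentiability of $u$ at $x$, which upgrades to $G$-differentiability via the substitution $w=(v_1,h(x_1)v_2)$ in the Fr\'echet limit. If $h(x_1)=0$, then $D^+_Gu(x)=\{(p_1,0)\}$ forces, through Point 3 of Lemma~\ref{D+D-}, the one-variable superdifferential of $u(\cdot,x_2)$ at $x_1$ to equal $\{p_1\}$ (the reverse implication here uses the trivial bound $|v_1|\leq|v|$ to promote a one-variable limsup bound into a $G$-limsup bound); the one-dimensional version of \cite[Proposition~3.3.4]{CS} then gives differentiability of the slice at $x_1$ with derivative $p_1$, and the resulting estimate $u(x_1+v_1,x_2)-u(x)-p_1v_1=o(|v_1|)=o(|v|)$ upgrades this to $G$-differentiability of $u$ at $x$ with $D_Gu(x)=(p_1,0)=p$.
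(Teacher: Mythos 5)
Your proof is correct, but it reaches the conclusion by a genuinely different route from the paper's, most visibly in Points 3 and 4. For Points 1 and 2 you essentially coincide with the paper, which is terser: it treats the case $h(x_1)=0$, $v_1=0$ separately, otherwise adapts \cite[Proposition 3.3.1]{CS} with $y=(x_1+v_1,x_2+h(x_1)v_2)$, and obtains Point 2 by passing to the limit in \eqref{CNESGsemiconcavita} exactly as you propose. For Point 3 the paper invokes Rademacher's theorem to produce points of ($G$-)differentiability accumulating at $x$ and then applies Point 2, while you push a Euclidean supergradient forward through $(q_1,q_2)\mapsto(q_1,h(x_1)q_2)$; both are sound. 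The substantive divergence is Point 4: the paper never splits on the value of $h(x_1)$. It shows directly that the unique $p\in D^+_Gu(x)$ also lies in $D^-_Gu(x)$, by evaluating \eqref{CNESGsemiconcavita} at the shifted points $x_k=(x_1+v_{k,1},\,x_2+h(x_1+v_{k,1})v_{k,2})$ with increment $v=-v_k$, using Points 2--3 to force $p_k\to p$, absorbing the mismatch $h(x_{k,1})-h(x_1)$ into an $o(|v_k|)$ term via the Lipschitz continuity of $u$ and $h$, and then concluding with Proposition~\ref{315}. Your dichotomy $h(x_1)\neq 0$ versus $h(x_1)=0$ instead reduces everything to the classical fact that a semiconcave function with singleton superdifferential is differentiable, applied either to $u$ itself or to the one-dimensional slice $u(\cdot,x_2)$; this is more elementary and is legitimate here precisely because the degeneracy is diagonal, so that at a zero of $h$ the $G$-increment collapses onto the $x_1$-axis and the $G$-superdifferential is entirely determined by the slice (the $|v_1|$-versus-$|v|$ denominator issue you flag is the one delicate step, and you handle it correctly). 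What you pay for this is a heavier reliance on the exact equivalences in Points 2 and 3 of Lemma~\ref{D+D-}, which the paper declares ``obvious'' without detail, and an argument that would not survive a degeneracy failing to decouple coordinatewise, whereas the paper's intrinsic computation would.
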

\begin{proof}
1. Consider $p\in D^{+}_Gu(x)$. When $h(x_1)=0$ and $v_1=0$, inequality~\eqref{CNESGsemiconcavita} is a trivial consequence of Point 3 of Lemma~\ref{D+D-}. Otherwise, the rest of the proof is an adaptation of the argument in~\cite[Proposition 3.3.1]{CS} using \cite[equation (2.1)]{CS} with $y=(x_1+v_1,x_2+h(x_1)v_2)$.\\
2. It follows directly from~\eqref{CNESGsemiconcavita}.\\
3. Being semiconcave, the function~$u$ is locally Lipschitz continuous. By Rademacher's theorem, there exists a sequence of points $\{x_k\}_k$ with $\lim_k x_k=x$ where $u$ is differentiable and, in particular, $G$-differentiable with $\vert D_Gu(x_k)\vert\leq L$ (for some~$L$). Possibly passing to a subsequence, $D_Gu(x_k)\rightarrow p$; hence, by point (2), $p\in D^{+}_Gu(x)$.\\
4. By Proposition~\ref{315}, it suffices to prove: $p\in D^{-}_Gu(x)$.
To this end, consider any sequence $\{v_k\}_k$, with $v_k\to 0$ as $k\to+\infty$ and introduce $\{x_k\}_k$ as
\[
x_k=(x_{k,1}, x_{k,2}):=(x_1+v_{k,1}, x_2+h(x_{k,1})v_{k,2}).
\]
We observe that: ($i$) $x_k\to x$ as $k\to+\infty$, ($ii$)  by point~(3), $\exists p_k\in D^+_Gu(x_k)$ with $|p_k|\leq L$, ($iii$) by point~(2) and possibly passing to a subsequence, $p_k\rightarrow p$ as $k\to+\infty$.
Relation~\eqref{CNESGsemiconcavita} centered in $x_k$ defined above,  with $v=-v_k$, gives
\begin{multline*}
-u(x_{k,1}-v_{k,1},x_{k,2}-h(x_{k,1})v_{k,2})+u(x_{k,1}, x_{k,2})-(p_k,v_k)\\ \geq - |(v_{k,1}, h(x_{k,1})v_{k,2})| \omega(|(v_{k,1}, h(x_{k,1})v_{k,2})|).
\end{multline*}
By our choice of $x_k$, this inequality entails
\begin{equation*}
\begin{aligned}
&\frac{-u(x_{1},x_{2})+u(x_1+v_{k,1}, x_2+h(x_{1})v_{k,2})-(p,v_k)}{|v_k|} \\
&\geq \frac{u(x_1+v_{k,1}, x_2+h(x_{1})v_{k,2})-u(x_1+v_{k,1}, x_2+h(x_{k,1})v_{k,2})+(p_k-p,v_k)}{|v_k|}\\&\qquad - \frac{|(v_{k,1}, h(x_{k,1})v_{k,2})| \omega(|(v_{k,1}, h(x_{k,1})v_{k,2})|)}{|v_k|}\\
&\geq \frac{L L' |v_{k,2}||v_{k,1}|}{|v_k|}+ (p_k-p,v_k/|v_k|)- \frac{|(v_{k,1}, h(x_{k,1})v_{k,2})| \omega(|(v_{k,1}, h(x_{k,1})v_{k,2})|)}{|v_k|}
\end{aligned}
\end{equation*}
where $L$ and $L'$ are respectively local Lipschitz constants of $u$ and of $h$. Letting $k\to+\infty$, we obtain
\[
\liminf_{k\to+\infty}\frac{u(x_1+v_{k,1}, x_2+h(x_{1})v_{k,2})-u(x_{1},x_{2})-(p,v_k)}{|v_k|}\geq 0;
\]
by the arbitrariness of $v_k$, we conclude: $p\in D^{-}_Gu(x)$.
\end{proof}
In the next statement we establish that semiconcave functions always have directional derivatives.
\begin{proposition}
Let $u$ be a semiconcave function with modulus of semiconcavity~$\omega$. Then, for any direction~$\theta$, the directional derivative $\partial_Gu (x,\theta)$ exists and the following equalities hold:
\[\partial_Gu (x,\theta)=\partial^-_Gu (x,\theta)=\partial_G^+u (x,\theta)=u^0_{G,-}(x,\theta).
\]
\end{proposition}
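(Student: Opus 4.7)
My plan is to mirror the classical argument for Euclidean semiconcave functions (cf.~\cite[Theorem 3.3.6]{CS}), replacing the Euclidean semiconcavity inequality by its $G$-analogue from Proposition~\ref{prp:gdiff}(1), and to organize the proof in three steps.

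\emph{Existence of $\partial_Gu(x,\theta)$.} Since $h(x_1)$ does not depend on $l$, the function $l\mapsto u(x_1+l\theta_1,x_2+h(x_1)l\theta_2)$ is the restriction of the (classically) semiconcave function $u$ to the affine line $\{x+l(\theta_1,h(x_1)\theta_2):l\in\R\}$; hence it is a semiconcave function of the real variable $l$ and admits a right derivative at $l=0$, which by definition is $\partial_Gu(x,\theta)$.

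\emph{Equality of $\partial_Gu$, $\partial^{-}_Gu$ and $\partial^{+}_Gu$.} The chain $\partial^{-}_Gu(x,\theta)\le\partial_Gu(x,\theta)\le\partial^{+}_Gu(x,\theta)$ is immediate from the definitions by restricting to the constant sequence $\theta'\equiv\theta$. For the converse $\partial^{+}_Gu(x,\theta)\le\partial_Gu(x,\theta)$, I would pick any $p\in D^{+}_Gu(x)$ (nonempty by Proposition~\ref{prp:gdiff}(3)) and apply the semiconcavity inequality of Proposition~\ref{prp:gdiff}(1) with $v=l\theta'$; dividing by $l>0$ gives
\[
\frac{u(x_1+l\theta'_1,x_2+h(x_1)l\theta'_2)-u(x)}{l}-(p,\theta')\le|(\theta'_1,h(x_1)\theta'_2)|\,\omega\bigl(l\,|(\theta'_1,h(x_1)\theta'_2)|\bigr).
\]
Taking $\limsup$ as $l\to 0^{+}$ and $\theta'\to\theta$ yields $\partial^{+}_Gu(x,\theta)\le(p,\theta)$, and specializing to $\theta'=\theta$ yields $\partial_Gu(x,\theta)\le(p,\theta)$. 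The chain then closes via the representation $\partial_Gu(x,\theta)=\min_{p\in D^{+}_Gu(x)}(p,\theta)$, which is obtained by viewing $\phi(l):=u(x+l(\theta_1,h(x_1)\theta_2))$ as a one-dimensional semiconcave function (so that $\phi'(0^{+})=\min D^{+}\phi(0)$) and translating back to the $G$-superdifferential via Lemma~\ref{D+D-}(2) when $h(x_1)\ne 0$ and via the explicit description in Lemma~\ref{D+D-}(3) when $h(x_1)=0$.

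\emph{Equality with $u^0_{G,-}$.} The inequality $u^0_{G,-}(x,\theta)\le\partial_Gu(x,\theta)$ follows by specializing to $y\equiv x$. For the converse I would repeat the previous argument with the base point moving along a sequence $y_n\to x$: pick $p_{y_n}\in D^{+}_Gu(y_n)$, extract a convergent subsequence using the local boundedness of $D^{+}_Gu$ coming from the Lipschitz continuity of $u$, and invoke Proposition~\ref{prp:gdiff}(2) to ensure $p_{y_n}\to p\in D^{+}_Gu(x)$. The semiconcavity inequality written at $y_n$ then yields $u^0_{G,+}(x,\theta)\le\max_{q\in D^{+}_Gu(x)}(q,\theta)$; applying this to $-\theta$ and using the elementary identity $u^0_{G,-}(x,\theta)=-u^0_{G,+}(x,-\theta)$ (valid thanks to the Lipschitz continuity of $h$, which makes the discrepancy $h(y_1)-h(y_1-l\theta_1)=O(l)$ negligible) recovers $u^0_{G,-}(x,\theta)\ge\min_{q\in D^{+}_Gu(x)}(q,\theta)=\partial_Gu(x,\theta)$. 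I expect the most delicate ingredient to be the representation $\partial_Gu(x,\theta)=\min_{p\in D^{+}_Gu(x)}(p,\theta)$ in the degenerate case $h(x_1)=0$, where $D^{+}_Gu(x)$ is constrained to the horizontal axis by Lemma~\ref{D+D-}(3) and one has to match the minimization with the explicit formula $\partial_Gu(x,\theta)=|\theta_1|\,\partial u(x,(\textrm{sgn}(\theta_1),0))$ of the same lemma.
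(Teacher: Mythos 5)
Your proof takes a genuinely different route from the paper's. The paper never uses (and does not yet have available) the representation $\partial_Gu(x,\theta)=\min_{p\in D^{+}_Gu(x)}(p,\theta)$: it derives everything from the near-monotonicity of the difference quotients $l\mapsto [u(x_1+l\theta_1,x_2+h(x_1)l\theta_2)-u(x)]/l$, which the semiconcavity inequality makes increasing as $l\downarrow 0$ up to an $\omega$-error. That single inequality gives at once the existence of $\partial_Gu(x,\theta)$ and $\partial^{-}_Gu\ge\partial^{+}_Gu$ (after reducing the Dini derivatives to fixed $\theta$ via Lipschitz continuity, Lemma~\ref{D+D-}(4)); the remaining inequality $\partial^{+}_Gu(x,\theta)\le u^0_{G,-}(x,\theta)$ is then obtained by a continuity argument in the base point (freeze a scale $\overline\ell$, compare the quotient at $x$ with the quotient at nearby $y$, use monotonicity at $y$, then minimize over $y$ and $l$), not by your duality $u^0_{G,-}(x,\theta)=-u^0_{G,+}(x,-\theta)\ge\min_q(q,\theta)$. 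Your Step 1 and Step 3 are correct as written: the restriction-to-a-line argument is equivalent to the paper's monotonicity inequality, the $O(l)$ discrepancy coming from $h(y_1)-h(y_1-l\theta_1)$ is indeed harmless, and Proposition~\ref{prp:gdiff}(1)--(2) supply exactly the compactness you need for the moving base point. The whole construction, however, hinges on the representation formula you invoke in Step 2.

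That is the one place where your argument is underjustified, and it matters for two reasons. First, the $G$-version of that representation is precisely relation~\eqref{2s11} of Theorem~\ref{thm336}, which the paper proves \emph{after} and \emph{from} the present proposition, so you cannot quote it here without circularity. Second, your proposed derivation ("$\phi'(0^+)=\min D^{+}\phi(0)$, then translate back via Lemma~\ref{D+D-}") only identifies $\partial_Gu(x,\theta)$ with the minimum of the \emph{one-dimensional} superdifferential $D^{+}\phi(0)$ of the restriction; identifying $\min D^{+}\phi(0)$ with $\min_{p\in D^{+}_Gu(x)}(p,\theta)$ requires exhibiting some $p\in D^{+}_Gu(x)$ with $(p,\theta)\le\phi'(0^+)$, which is the hard half of \cite[Theorem 3.3.6]{CS} and is not a consequence of Lemma~\ref{D+D-}(2)--(3) alone. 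Without it, both the closing of the chain $\partial^{+}_Gu\le\partial_Gu$ in Step 2 and the final inequality of Step 3 hang on an unproved identity. The gap is repairable: since $u$ is classically semiconcave you may invoke the Euclidean \cite[Theorem 3.3.6]{CS} to get $\partial u(x,v)=\min_{q\in D^{+}u(x)}(q,v)$ with $v=(\theta_1,h(x_1)\theta_2)$, and then Lemma~\ref{D+D-}(2) (for $h(x_1)\ne0$) or Lemma~\ref{D+D-}(3) together with the one-dimensional theory (for $h(x_1)=0$) converts this into the $G$-statement. Make that citation explicit and your proof closes; it is then a legitimate, somewhat less self-contained alternative to the paper's argument, which deliberately avoids any representation formula at this stage.
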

\begin{proof} The proof is similar to the proof of \cite[Theorem 3.2.1]{CS} so we just sketch it. Fix a direction~$\theta$ and consider $0<l_1<l_2$. Relation~\cite[eq. (2.1)]{CS} with $\lambda=1-l_1/l_2$, $y=(x_1+l_2\theta_1,x_2+h(x_1)l_2\theta_2)$ entails
\begin{multline}\label{316}
\frac{u(x_1+l_1\theta_1, x_2+h(x_1)l_1\theta_2)-u(x)}{l_1}\geq
\frac{u(x_1+l_2\theta_1, x_2+h(x_1)l_2\theta_2)-u(x)}{l_2} \\-\left(1-\frac{l_1}{l_2}\right)|(\theta_1, h(x_1)\theta_2)| \omega(l_2|(\theta_1, h(x_1)\theta_2)|).
\end{multline}
Passing to the $\liminf_{l_1\to 0^+}$ and after to the $\limsup_{l_2\to 0^+}$, we get $\partial ^-_G u(x,\theta)\geq \partial^+_Gu(x,\theta)$; hence, $\partial_G u(x,\theta)$ exists and it coincides both with the upper and the lower $G$-Dini derivatives.
Moreover, by the definitions of $\partial_G^+ u(x,\theta)$ and of $u^0_{G,-} (x,\theta)$, Point 4 of Lemma~\ref{D+D-} easily entails: $\partial_G^+ u(x,\theta)\geq u^0_{G,-} (x,\theta)$. Therefore, it remains to prove
\begin{equation}\label{317}
\partial_G^+ u(x,\theta)\leq u^0_{G,-} (x,\theta).
\end{equation}
Let $\epsilon$ and $\overline \ell$ be two fixed positive constants with $\overline \ell\geq l$. Since $u$ is continuous, there exists $\alpha$ sufficiently small such that
\[
\frac{u(x_1+\overline \ell\theta_1,x_2+\overline \ell\theta_2h(x_1))-u(x)}{\overline \ell}\leq
\frac{u(y_1+\overline \ell\theta_1,y_2+\overline \ell\theta_2h(y_1))-u(y)}{\overline \ell}+\epsilon\qquad \forall y\in B_\alpha(x).
\]
By inequality~\eqref{316} (with $x$, $l_1$ and $l_2$ replaced respectively by $y$, $l$), we get
\begin{multline*}
\frac{u(y_1+\overline \ell\theta_1,y_2+\overline \ell\theta_2h(y_1))-u(y)}{\overline \ell}\leq
\frac{u(y_1+l\theta_1,y_2+l\theta_2h(y_1))-u(y)}{l}\\
+\frac{\overline \ell-l}{\overline \ell} |(\theta_1,h(y_1)\theta_2)| \omega(\overline \ell|(\theta_1,h(y_1)\theta_2)|)\qquad \forall l\in(0,\overline \ell).
\end{multline*}
By the last two inequalities we deduce
\begin{multline*}
\frac{u(x_1+\overline \ell\theta_1,x_2+\overline \ell\theta_2h(x_1))-u(x)}{\overline \ell}\leq
\min\limits_{y\in B_\alpha(x),l\in(0,\overline \ell)}\frac{u(y_1+l\theta_1,y_2+l\theta_2h(y_1))-u(y)}{l}\\+|(\theta_1,h(y_1)\theta_2)| \omega(\overline \ell|(\theta_1,h(y_1)\theta_2)|)+\epsilon.
\end{multline*}
Taking into account the definition of $u^0_{G,-}(x,\theta)$, we get
\[
\frac{u(x_1+\overline \ell\theta_1,x_2+\overline \ell\theta_2h(x_1))-u(x)}{\overline \ell}\leq
u^0_{G,-}(x,\theta)+|(\theta_1,h(x_1)\theta_2)| \omega(\overline \ell|(\theta_1,h(x_1)\theta_2)|)+\epsilon.
\]
In conclusion, passing to the limit for $\epsilon\to 0^+$ and then
$\displaystyle\limsup_{\overline \ell \to 0}$, we obtain inequality~\eqref{317}.
\end{proof}
\begin{theorem}\label{thm336}
Let $u$ be a semiconcave function. Then, there holds
\begin{equation}\label{s11}
D^+_Gu(x)= co D^*_Gu(x);
\end{equation}
moreover, for any direction~$\theta$, the $G$-directional derivative of~$u$ in the direction~$\theta$ satisfies
\begin{equation}\label{2s11}
\partial_Gu(x,\theta)=\min_{p\in D^+_Gu(x)}(p,\theta)=\min_{p\in D^*_Gu(x)}(p,\theta).
\end{equation}
\end{theorem}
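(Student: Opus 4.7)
I would prove \eqref{2s11} first and deduce \eqref{s11} from it. The inclusion $\mathrm{co}\,D^*_Gu(x)\subseteq D^+_Gu(x)$ is immediate: Proposition~\ref{prp:gdiff}(2) gives $D^*_Gu(x)\subseteq D^+_Gu(x)$, and $D^+_Gu(x)$ is convex by its half-space description in Proposition~\ref{315}. By the local Lipschitz continuity of the semiconcave function $u$ and Rademacher's theorem, the reachable gradients are bounded; consequently $D^*_Gu(x)$, $\mathrm{co}\,D^*_Gu(x)$ and $D^+_Gu(x)$ are non-empty compact convex subsets of $\R^2$, and \eqref{s11} will follow from the coincidence of their support functions in every direction, i.e., from \eqref{2s11}. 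One of the inequalities in \eqref{2s11} is painless: combining Proposition~\ref{315} with the preceding Proposition yields $\partial_Gu(x,\theta)=\partial^+_Gu(x,\theta)\leq(p,\theta)$ for every $p\in D^+_Gu(x)$, hence
\[
\partial_Gu(x,\theta)\ \leq\ \min_{p\in D^+_Gu(x)}(p,\theta)\ \leq\ \min_{p\in D^*_Gu(x)}(p,\theta),
\]
the last inequality because $D^*_Gu(x)\subseteq D^+_Gu(x)$.

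The crux of the proof is therefore the construction of a vector $p^*\in D^*_Gu(x)$ satisfying $(p^*,\theta)\leq\partial_Gu(x,\theta)$, which will close the chain of inequalities above. Using $\partial_Gu(x,\theta)=u^0_{G,-}(x,\theta)$ from the preceding Proposition, I would pick sequences $y_k\to x$ and $l_k\to 0^+$ such that, writing $\theta_k':=(\theta_1,h(y_{k,1})\theta_2)\in\R^2$,
\[
R_k\ :=\ \frac{u(y_k+l_k\theta_k')-u(y_k)}{l_k}\ \longrightarrow\ \partial_Gu(x,\theta).
\]
Rademacher's theorem and Fubini allow me to perturb $y_k$ by a vector of norm $o(l_k)$ (which does not affect $R_k$, by the local Lipschitz character of $u$) so that $u$ is differentiable almost everywhere along the segment $\{y_k+t\theta_k':t\in[0,l_k]\}$. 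Writing the increment as a line integral of $(Du,\theta_k')$ and comparing with the integral of $(D_Gu(y_k+t\theta_k'),\theta)$ --- the two integrands differ by at most $|\theta_2|\,|h(y_{k,1})-h(y_{k,1}+t\theta_1)|\,|\partial_2 u|=O(t)$ thanks to the $C^2$ regularity of $h$ and the Rademacher bound on $|Du|$ --- one obtains
\[
u(y_k+l_k\theta_k')-u(y_k)\ =\ \int_0^{l_k}\bigl(D_Gu(y_k+t\theta_k'),\,\theta\bigr)\,dt\ +\ O(l_k^2).
\]

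Dividing by $l_k$ and picking $t_k\in(0,l_k)$ at which $u$ is $G$-differentiable at $z_k:=y_k+t_k\theta_k'$ and the integrand does not exceed the integral mean by more than $1/k$, I get $(D_Gu(z_k),\theta)\leq R_k+\tfrac{1}{k}+O(l_k)$ with $z_k\to x$ and $\{D_Gu(z_k)\}_k$ bounded. Extracting a convergent subsequence yields $D_Gu(z_k)\to p^*\in D^*_Gu(x)$ and $(p^*,\theta)\leq\partial_Gu(x,\theta)$, which closes the chain and proves \eqref{2s11}. Equality of support functions of closed convex compact sets then delivers \eqref{s11}.

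The main obstacle is the integration/perturbation step above: reconciling the non-standard $G$-scaling with the Euclidean tools (Fubini, Rademacher, the fundamental theorem of calculus) on which the construction of $p^*$ relies. Particular care is required when $h(x_1)=0$, because there the scaling degenerates; in that case Lemma~\ref{D+D-}(4) forces the second component of every element of $D^*_Gu(x)$ to be zero, and Lemma~\ref{D+D-}(3) reduces both the superdifferential $D^+_Gu(x)$ and the $G$-directional derivative to one-dimensional objects associated with the $x_1$-variable, so that the scheme above collapses to the classical semiconcave argument on $\R$.
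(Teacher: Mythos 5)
Your proposal is correct, and the overall frame (the easy chain $\partial_Gu(x,\theta)\le\min_{D^+_G}\le\min_{D^*_G}$ via Proposition~\ref{315} and Proposition~\ref{prp:gdiff}, then deducing \eqref{s11} from equality of support functions of compact convex sets as in Cannarsa--Sinestrari) coincides with the paper's. Where you genuinely diverge is in the key inequality $\min_{p\in D^*_Gu(x)}(p,\theta)\le\partial_Gu(x,\theta)$. The paper produces the reachable gradient by applying the semiconcavity inequality \cite[eq. (3.18)]{CS} at a single sequence of Euclidean differentiability points $x_k=(x_1+v_{k,1},x_2+h(x_1)v_{k,2})$ approaching $x$ along the $G$-warped direction $\theta$, with a separate (but structurally identical) computation when $h(x_1)=0$; semiconcavity is used pointwise to convert $D_Gu(x_k)$ into a one-sided bound on the difference quotient. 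You instead run a Clarke-type mean-value argument: using $\partial_Gu(x,\theta)=u^0_{G,-}(x,\theta)$ from the preceding Proposition, you write the increment along short segments emanating from nearby base points $y_k$ as a line integral of $(Du,\theta_k')$, compare the integrand with $(D_Gu,\theta)$ up to an $O(t)$ error coming from $|h(y_{k,1})-h(y_{k,1}+t\theta_1)|$, and select a sub-mean point $t_k$; this step needs only Lipschitz continuity, Rademacher and Fubini, so semiconcavity enters solely through the identity $\partial_Gu=u^0_{G,-}$ and through the stability $D^*_G\subseteq D^+_G$. Your route is longer but treats $h(x_1)=0$ and $h(x_1)\neq0$ essentially uniformly, the only truly degenerate configuration being $h(x_1)=0$ with $\theta_1=0$, where $\theta_k'$ may vanish and there is no segment to integrate over; there, as you note, Lemma~\ref{D+D-}(3)--(4) make both sides of \eqref{3s11} equal to zero. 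One small caution on your closing remark: for $h(x_1)=0$ and $\theta_1\neq0$ you should not literally ``collapse to the classical argument on $\R$'', since a reachable gradient of the one-dimensional restriction $u(\cdot,x_2)$ is not automatically the first component of an element of $D^*_Gu(x)$ (which is defined via two-dimensional approximation); your own segment construction already produces genuine planar differentiability points $z_k$, so you should simply run it as stated rather than reduce to one variable. Also, the compactness of $D^+_Gu(x)$ you assert up front deserves the one-line justification that $\partial^+_Gu(x,\cdot)$ is bounded below by $-L|\cdot|$ for Lipschitz $u$, so that Proposition~\ref{315} forces $|p|\le L$.
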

\begin{proof}
We shall use some of the arguments of \cite[Theorem 3.3.6]{CS}. Let us prove relations~\eqref{2s11}. For any direction~$\theta$, using Proposition~\ref{315} and Proposition~\ref{prp:gdiff}-(2), we obtain
\[
\partial_Gu(x,\theta)\leq \min_{p\in D^+_Gu(x)}(p,\theta)\leq \min_{p\in D^*_Gu(x)}(p,\theta).
\]
Hence, it remains to prove
\begin{equation}\label{3s11}
\min_{p\in D^*_Gu(x)}(p,\theta)\leq \partial_Gu(x,\theta)\qquad\textrm{for any direction~$\theta$}.
\end{equation}
In order to prove this inequality, we study separately the cases when~$x_1$ belongs or not to $\{h(x_1)=0\}$. Assume $h(x_1)\ne 0$ and fix a direction~$\theta$. Since $u$ is differentiable a.e., there exists a sequence~$\{v_k\}_k$, with $v_k\in\re^2$, such that: $(i)$ $v_k\to 0$ as $k\to+\infty$, $(ii)$ $v_k/|v_k|\to \theta$ as $k\to+\infty$, $(iii)$ $u$ is differentiable at~$x_k:=(x_1+v_{k,1}, x_2+v_{k,2}h(x_1))$, $(iv)$ (taking advantage of the Lipschitz continuity of $u$ and possibly passing to a subsequence) $D_Gu(x_k)$ converge to some $p\in D^*_Gu(x)$ as $k\to+\infty$.
Applying inequality~\cite[eq. (3.18)]{CS} (with $x$ and $y$ replaced respectively by $x_k$ and $x$), we get
\begin{equation}\label{tag:star1}
u(x)-u(x_k)+(Du(x_k),(v_{k,1},h(x_1)v_{k,2}))\leq |(v_{k,1},h(x_1)v_{k,2})|\omega(|(v_{k,1},h(x_1)v_{k,2})|).
\end{equation}
On the other hand, we observe that point $(iii)$ here above and Point 2 of Lemma~\ref{D+D-} ensure that~$u$ is $G$-differentiable at~$x_k$ with $D_Gu(x_k)=(D_1u(x_k), h(x_{k,1})D_2u(x_k))$.
Hence, we have
\begin{equation}\label{tag:star2}
\begin{aligned}(Du(x_k),(v_{k,1},h(x_1)v_{k,2}))&=(D_Gu(x_k),v_{k}) +D_2u(x_k)v_{k,2}[h(x_1)-h(x_{k,1})]\\
&\geq  (D_Gu(x_k),v_{k}) - C|v_{k,2}| |v_{k,1}|,
\end{aligned}
\end{equation}
where the last inequality holds for a suitable $C>0$, and is due to the Lipschitz continuity of~$u$ and of~$h$.
By \eqref{tag:star1} and \eqref{tag:star2}, we get
\[
(D_Gu(x_k),v_{k}/|v_{k}|)\leq \frac{u(x_k)-u(x)}{|v_{k}|}+\frac{C|v_{k,2}| |v_{k,1}|}{|v_{k}|}+\frac{|(v_{k,1},h(x_1)v_{k,2})|}{|v_{k}|}\omega(|(v_{k,1},h(x_1)v_{k,2})|).
\]
Letting $k\to+\infty$, we infer: $(p,\theta)\leq \partial_Gu(x,\theta)$ for some $p\in D_G^*u(x)$ which, in turns, entails~\eqref{3s11}.

Consider now $x$ such that $h(x_1)=0$. By Point 4 of Lemma \ref{D+D-} we have: $\min_{p\in D^*_G u(x)}(p,\theta) = \min_{p\in D^*_G u(x)} p_1\theta_1$; taking into account also Point 3 of Lemma \ref{D+D-},
relation~\eqref{3s11} is equivalent to
\[
\min_{p\in D^*_G u(x)} p_1\textrm{sgn}(\theta_1)\leq  \partial u(x,(\textrm{sgn}(\theta_1),0))\qquad\forall \theta_1\in[-1,1]\setminus\{0\}.
\]
In order to prove this relation, we follow an argument similar to the previous case. We consider a sequence $\{v_k\}_k$ such that: $(i)$ $v_k\to 0$ as $k\to+\infty$, $(ii)$ $v_k/|v_k|\to (\textrm{sgn}(\theta_1),0)$ as $k\to+\infty$  (in particular $v_{k,2}/|v_k|\to 0$), $(iii)$ $u$ is differentiable at~$x_k:=(x_1+v_{k,1}, x_2+v_{k,2})$ (note that this definition is different from the corresponding one in the previous case), $(iv)$ $D_Gu(x_k)$ converge to some $p\in D^*_Gu(x)$ as $k\to+\infty$.
Applying inequality~\cite[eq. (3.18)]{CS} (with $x$ and $y$ replaced respectively by $x_k$ and $x$), we get
\[
(Du(x_k),v_{k})\leq u(x_k)-u(x)+ |v_{k}|\omega(|v_{k}|).
\]
Again we get that $u$ is $G$-differentiable at~$x_k$ with $D_Gu(x_k)=(D_1u(x_k), h(x_{k,1})D_2u(x_k))$. Hence, we deduce
\[
(Du(x_k), v_k)=(D_Gu(x_k),v_k)+D_2u(x_k)[1-h(x_{k,1})]v_{k,2}\geq (D_Gu(x_k),v_k)-C|v_{k,2}|
\]
where the last inequality is due to the Lipschitz continuity of~$u$ and to the boundedness of~$h$.
By the last two inequalities, we get
\[
(D_Gu(x_k),v_{k}/|v_{k}|)\leq \frac{u(x_k)-u(x)}{|v_{k}|}+\frac{C|v_{k,2}|}{|v_{k}|}+\omega(|v_k|).
\]
Letting $k\to+\infty$, we infer: $p_1\textrm{sgn}(\theta_1)\leq \partial u(x,(\textrm{sgn}(\theta_1),0))$.
Hence, relations~\eqref{2s11} are completely proved. Arguing as in~\cite[Theorem 3.3.6]{CS}, we infer relation~\eqref{s11}.
\end{proof}

\noindent{\bf Acknowledgments.} The first and the second authors are members of GNAMPA-INdAM and were partially supported also by the research project of the University of Padova ``Mean-Field Games and Nonlinear PDEs'' and by the Fondazione CaRiPaRo Project ``Nonlinear Partial Differential Equations: Asymptotic Problems and Mean-Field Games''. The fourth author has been partially funded by the ANR project ANR-16-CE40-0015-01.
The authors wish to warmly thank  P. Cardaliaguet for helpful discussions and hints.

\end{document}